\numberwithin{equation}{section}
\def\hmath$#1${\texorpdfstring{{\rmfamily\textit{#1}}}{#1}}
\newcommand{\ie}{i.e.,\xspace}
\definecolor{green}{RGB}{34,139,34}
\newcommand{\OrdersToJBig}[1]{\hyperref[alg: orders to j-inv]{\textsf{\upshape Orders\-To\-j\-Invariant\-Big\-Set}\ensuremath{}}}
\newcommand{\OrdersToJSmall}{{\textsf{\upshape Orders\-To\-jInvariant\-Small\-Set}}}
\newcommand{\OrderToJ}[1]{{\textsf{\upshape Single\-Order\-To\-j\-Invariant}\ensuremath{(#1)}}}
\newcommand{\OrdersToJName}{\hyperref[alg: orders to j-inv]{\textsf{\upshape Orders\-To\-jInvariant\-Big\-Set}}}
\newcommand{\OrderToJName}{{\textsf{\upshape Single\-Order\-To\-jInvariant}}}
\newcommand{\KerToId}[1]{\hyperref[alg: kernel to ideal]{\textsf{\upshape Kernel\-To\-Ideal}\ensuremath{_{#1}}}}
\newcommand{\IdToKer}[1]{\hyperref[alg: ideal to kernel]{\textsf{\upshape Ideal\-To\-Kernel}\ensuremath{_{#1}}}}
\newcommand{\ItIE}[1]{\hyperref[alg: new algo]{\textsf{\upshape Ideal\-To\-Isogeny\-From\-Eichler}\ensuremath{_{#1}}}}
\newcommand{\IdToIsoEich}[1]{\hyperref[alg: new algo]{\textsf{\upshape Ideal\-To\-Isogeny\-From\-Eichler}\ensuremath{_{#1}}}}
\newcommand{\IdToIsoKLPT}[1]{\hyperref[alg:IdealToIsogeny2bullet]{\textsf{\upshape Ideal\-To\-Isogeny\-From\-KLPT}\ensuremath{_{#1}}}}
\newcommand{\IdToIsoCoprime}[1]{\hyperref[alg:SpecialIdealToIsogeny]{\textsf{\upshape Ideal\-To\-Isogeny\-Coprime}\ensuremath{_{#1}}}}
\newcommand{\IdToIsoSmallKLPT}[1]{\hyperref[alg:IdealToIsogeny2eepsilon]{\textsf{\upshape Ideal\-To\-Isogeny\-Small\-From\-KLPT}\ensuremath{_{#1}}}}
\newcommand{\IdToIsoSmallEich}[1]{\hyperref[alg: new sub-algo]{\textsf{\upshape Ideal\-To\-Isogeny\-Small\-From\-Eichler}\ensuremath{_{#1}}}}
\newcommand{\SpEicherN}[1]{\hyperref[alg: special eichler norm eq]{\textsf{\upshape Special\-Eichler\-Norm}\ensuremath{_{#1}}}}
\newcommand{\EichlerN}[1]{\hyperref[alg: eichler norm eq]{\textsf{\upshape Eichler\-Norm}\ensuremath{_{#1}}}}
\newcommand{\IdEichlerN}[1]{\hyperref[alg: eichler ideal norm equation]{\textsf{\upshape Ideal\-Eichler\-Norm}\ensuremath{_{#1}}}}
\newcommand{\SubEichlerN}[1]{\hyperref[alg: extended eichler]{\textsf{\upshape Suborder\-Eichler\-Norm}\ensuremath{_{#1}}}}
\newcommand{\IdSubEichlerN}[1]{\hyperref[alg: ideal suborder norm eq]{\textsf{\upshape Ideal\-Suborder\-Eichler\-Norm}\ensuremath{_{#1}}}}
\newcommand{\FRI}[1]{\hyperref[alg: repres integer full]{\textsf{\upshape Full\-Represent\-Integer}\ensuremath{_{#1}}}}
\newcommand{\RI}[1]{\hyperref[alg: repres integer]{\textsf{\upshape Represent\-Integer}\ensuremath{_{#1}}}}
\newcommand{\FSA}[1]{\hyperref[alg: full strong approx]{\textsf{\upshape Full\-Strong\-Approximation}\ensuremath{_{#1}}}}
\newcommand{\SA}[1]{\hyperref[alg:strong approx]{\textsf{\upshape Strong\-Approximation}\ensuremath{_{#1}}}}
\newcommand{\KLPT}[1]{\hyperref[alg:KLPT]{\textsf{\upshape KLPT}\ensuremath{_{#1}}}}
\newcommand{\GenKLPT}[1]{\hyperref[alg: generic klpt]{\textsf{\upshape Generic\-KLPT}\ensuremath{_{#1}}}}
\newcommand{\SignKLPT}[1]{\hyperref[alg: sign klpt]{\textsf{\upshape Signing\-KLPT}\ensuremath{_{#1}}}}
\newcommand{\SmoothGen}[1]{\hyperref[alg: smooth basis]{\textsf{\upshape Generating\-Family}\ensuremath{_{#1}}}}
\newcommand{\Compress}[1]{\hyperref[alg:compression]{\textsf{\upshape Compression}\ensuremath{_{#1}}}}
\newcommand{\Decompress}[1]{\hyperref[alg:decompression]{\textsf{\upshape Decompression}\ensuremath{_{#1}}}}
\newcommand{\EndoEval}[1]{\hyperref[alg: endo eval]{\textsf{\upshape Endomorphism\-Evaluation}\ensuremath{_{#1}}}}
\newcommand{\IdealEvaluation}[1]{\hyperref[alg: ideal evaluation]{\textsf{\upshape Ideal\-Evaluation}\ensuremath{_{#1}}}}
\newcommand{\SuborderEvaluation}[1]{\hyperref[alg: suborder evaluation]{\textsf{\upshape Suborder\-Evaluation}\ensuremath{_{#1}}}}
\newcommand{\VerifIdeal}[1]{\hyperref[alg: verif ideal proof]{\textsf{\upshape Ideal\-Verification}\ensuremath{_{#1}}}}
\newcommand{\VerifSuborder}[1]{\hyperref[alg: verif suborder proof]{\textsf{\upshape Suborder\-Verification}\ensuremath{_{#1}}}}
\newcommand{\IdealToSuborder}[1]{\hyperref[alg: ideal to suborder]{\textsf{\upshape Ideal\-To\-Suborder}\ensuremath{_{#1}}}}
\newcommand{\CheckTrace}[1]{\hyperref[alg: check trace]{\textsf{\upshape Check\-Trace}\ensuremath{_{#1}}}}
\newcommand{\InverseTrapdoor}[1]{\hyperref[alg:inv]{\textsf{\upshape Inverse\-Trapdoor}\ensuremath{_{#1}}}}
\newcommand{\pSIDHKeyGen}[1]{\hyperref[alg: keygen]{\textsf{\upshape pSIDH\-Key\-Gen}\ensuremath{_{#1}}}}
\newcommand{\pSIDHKeyEx}[1]{\hyperref[alg: key exch]{\textsf{\upshape pSIDH\-Key\-Exchange}\ensuremath{_{#1}}}}
\newcommand{\SetaQuadOrder}[1]{\hyperref[alg: keygen param]{\textsf{\upshape Seta\-Quadratic\-Order\-Gen}\ensuremath{_{#1}}}}
\newcommand{\SetaCurveGen}[1]{\hyperref[alg:randomized trapdoor curve gen]{\textsf{\upshape Seta\-Curve\-Gen}\ensuremath{_{#1}}}}
\newcommand{\SupersingularEvaluation}[1]{\hyperref[alg: supersingular evaluation]{\textsf{\upshape Super\-singular\-Eval\-ua\-tion}\ensuremath{_{#1}}}}
\newcommand{\SpecialSupersingularEvaluation}[1]{\hyperref[alg: supersingular evaluation]{\textsf{\upshape Special\-Super\-singular\-Eval\-ua\-tion}\ensuremath{_{#1}}}}
\newcommand{\ModularEvaluationBigLevel}[1]{\hyperref[alg: modular evaluation]{\textsf{\upshape Mo\-dular\-Eval\-ua\-tion\-Big\-Level}\ensuremath{_{#1}}}}
\newcommand{\ModularEvaluationBigChar}[1]{\hyperref[alg: modular evaluation]{\textsf{\upshape Mo\-dular\-Eval\-ua\-tion\-Big\-Char\-acteristic}\ensuremath{_{#1}}}}
\newcommand{\ModularEvaluation}[1]{\hyperref[alg: modular evaluation]{\textsf{\upshape Mo\-dular\-Eval\-ua\-tion}\ensuremath{_{#1}}}}
\newcommand{\ModularComputation}[1]{\hyperref[alg: modular computation]{\textsf{\upshape Mo\-dular\-Com\-puta\-tion}\ensuremath{_{#1}}}}
\newcommand{\EndoRing}[1]{\hyperref[alg: endo ring]{\textsf{\upshape Endo\-morphism\-Ring}\ensuremath{_{#1}}}}
\newcommand{\SupersingularModularComputation}[1]{\hyperref[alg: supersingular modular computation]{\textsf{\upshape Super\-singular\-Modular\-Compu\-ta\-tion}\ensuremath{_{#1}}}}
\newcommand{\SupersingularHilbertComputation}[1]{\hyperref[alg: supersingular hilbert computation]{\textsf{\upshape Super\-singular\-Hilbert\-Compu\-ta\-tion}\ensuremath{_{#1}}}}
\newcommand{\HilbertComputation}[1]{\hyperref[alg: hilbert computation]{\textsf{\upshape Hilbert\-Compu\-ta\-tion}\ensuremath{_{#1}}}}
\newcommand{\llog}{\textnormal{llog}}
\newcommand{\SDp}{\mathfrak{S}_D(p)}
\newcommand{\primes}[1]{\mathcal{P}_{D_{#1}}}
\newcommand{\primesell}{\mathcal{P}_\ell}
\newcommand{\primesO}{\mathcal{P}_\frakO}
\newcommand{\norm}[1]{\mathbf{n}(#1)}
\newcommand{\frakS}{\mathfrak{S}}
\renewcommand{\O}{\mathcal{O}}
\newcommand{\order}{\mathcal{O}}
\newcommand{\QA}{\mathcal{B}_{p,\infty}}
\newcommand{\MM}{\mathbb{M}}
\newcommand{\FF}{\mathbb{F}}
\newcommand{\ZZ}{\mathbb{Z}}
\newcommand{\NN}{\mathbb{N}}
\newcommand{\QQ}{\mathbb{Q}}
\newcommand{\PP}{\mathbb{P}}
\newcommand{\frakO}{\mathfrak{O}}
\newcommand{\Cl}{\textnormal{Cl}}
\DeclareMathOperator{\Typ}{Typ}
\newcommand{\G}[1] {\langle #1 \rangle}
\newcommand{\poly}[1]{ O(\mathsf{poly} (#1))}
\newcommand{\polylog}[1]{ O(\mathsf{poly} (\log (#1) ))}
\definecolor{light blue}{RGB}{0,102,204}
\def\emptyset{\{\}}
\DeclareFontFamily{OT1}{rsfs}{}
\DeclareFontShape{OT1}{rsfs}{n}{it}{<-> rsfs10}{}
\DeclareMathAlphabet{\mathscr}{OT1}{rsfs}{n}{it}
\newcommand{\cH}{\mathcal{H}}
\newcommand{\vphi}{\varphi}
\newcommand{\comment}[1]{}
\renewcommand{\O}{\mathcal{O}}
\DeclareMathOperator{\tr}{tr}
\DeclareMathOperator{\End}{End}
\newtheorem{claime}{Claim}
\crefname{figure}{Figure}{Figures}
\crefname{problem}{Problem}{Problems}
\crefname{proposition}{Proposition}{Propositions}
\crefname{algorithm}{Algorithm}{Algorithms}
\crefname{step}{Step}{Steps}
\crefname{lemma}{Lemma}{Lemmas}
\crefname{definition}{Definition}{Definitions}
\crefname{claime}{Claim}{Claims}
\crefname{assumption}{Assumption}{Assumptions}
\crefname{conjecture}{Conjecture}{Conjectures}
\newcounter{tasknumber}
\newcommand{\task}[2][]{%
  \addtocounter{tasknumber}{1}%
  \begin{center}%
  \framebox[1.1\width]{\begin{minipage}{0.9\textwidth}%
  \textbf{Task \arabic{tasknumber}} \textit{\if!#1(unassigned)!\else (#1)\fi}: {#2}%
  \end{minipage}}%
  \end{center}%
}
\pgfplotsset{compat=1.17}
\begin{document}

\renewcommand\algorithmicrequire{\textbf{Input:}}
\renewcommand\algorithmicensure{\textbf{Output:}}
\newcommand\Require{\REQUIRE}
\newcommand\Return{\RETURN}
\newcommand\Ensure{\ENSURE}
\renewcommand{\emph}[1]{\textit{#1}}
\renewcommand{\em}[1]{\textit{#1}}

\title{Computation of Hilbert class polynomials and modular polynomials from supersingular elliptic curves}
\author{Antonin Leroux}
\institute{DGA-MI, Bruz, France 
 \\ IRMAR - UMR 6625, Université de Rennes, France
\\ \email{antonin.leroux@polytechnique.org}}

\pagestyle{plain}

\maketitle

\begin{abstract}
    We present several new heuristic algorithms to compute class polynomials and modular polynomials modulo a prime $p$ by revisiting the idea of working with supersingular elliptic curves. 
    The best known algorithms to this date are based on ordinary curves, due to the supposed inefficiency of the supersingular case. While this was true a decade ago, the recent advances in the study of supersingular curves through the Deuring correspondence motivated by isogeny-based cryptography has provided all the tools to perform the necessary tasks efficiently.
    
    Our main ingredients are two new heuristic algorithms to compute the $j$-invariants of supersingular curves having an endomorphism ring contained in some set of isomorphism class of maximal orders. The first one is derived easily from the existing tools of isogeny-based cryptography, while the second introduces new ideas to perform that task efficiently for a big number of maximal orders at the same time.  

    For each of the polynomials (Hilbert and modular), we obtain two algorithms. The first one, that we will qualify as \textit{direct}, is based on the computation of a set of well-chosen supersingular $j$-invariants defined over $\FF_{p^2}$ and uses the aforementioned algorithm to translate maximal orders to $j$-invariants as its main building block.
    The second one is a CRT algorithm that applies the direct algorithm on a set of small primes and reconstruct the result modulo $p$ with the chinese remainder theorem. 
    
    In both cases, the direct algorithm achieves the best known complexity for primes $p$ that are relatively small compared to the discriminant (for the Hilbert case) and to the level (for the modular case). 
    
    Our CRT algorithms matches the complexities of the state-of-the-art CRT approach based on ordinary curves, while improving some of the steps, thus opening the possibility to a better practical efficiency.

\end{abstract}


\section{Introduction}

Hilbert class polynomials and modular polynomials are central objects in number theory, and their computation have numerous applications. One field where these computations are of particular interest is cryptography. The main applications are to be found in elliptic curve cryptography and pairing-based cryptography, but we can also mention, more marginally, the recent field of isogeny-based cryptography. 

Class polynomials, for instance, play a central role in the CM method, which is the main approach to find ordinary curves with a prescribed number of points over a given finite field (see \cite{atkin1993elliptic,broker2007efficient}). This has applications to primality proving with the ECPP method and finding pairing friendly-curves with the Cocks-Pinch method.  

Modular polynomials are related to isogenies between elliptic curves. Historically, they play a very important role in the SEA point counting algorithm \cite{elkies1998elliptic,schoof1995counting} which remains one of the main algorithms used in elliptic-curve cryptography to generate cryptograhic curves. 
Moreover, the interest in isogenies have been renewed with the rise of isogeny-based cryptography. While most applications tend to use the more efficient Vélu formulas \cite{V71}, we can cite a few instances where modular polynomials have been considered. For example, it is used in the CRS key exchange \cite{couveignes,RS06}, the very first isogeny-based protocol, and we can also mention the OSIDH construction \cite{CK19}.     

The goal of this work is to explore theoretical and practical improvements to the best-known algorithms to compute class polynomials and modular polynomials modulo prime numbers through the use of supersingular curves.



\paragraph*{Related work.} 
One of the main problems behind the computation of class polynomials and modular polynomials is the huge size of their coefficients over $\ZZ$. There exists several algorithms of quasi-linear complexity \cite{enge2009complexity,couveignes2002action,sutherland2011computing,broker2012modular}, but more often than not, memory is the real bottleneck in the concrete computations of those polynomials. 
In theory, size is less an issue when the result is needed modulo some prime $p$, but this is only true in practice if we have a way to skip entirely the computation over $\ZZ$, which is not so easy to get. 

Nonetheless, Sutherland \cite{sutherland2011computing} proved that this could be done for class polynomials by a careful application of the CRT method. The result was later applied to modular polynomials by Bröker, Lauter and Sutherland (BLS) \cite{broker2012modular}. 
The main advantage of the CRT method compared to other approaches is the low memory requirement (almost optimal in the size of the final output), and this is why this method has achieved the best practical results.

For both class and modular polynomials, the main tools used in the CRT algorithms from \cite{sutherland2011computing,sutherland2012accelerating,broker2012modular} are /ordinary elliptic curves.

Supersingular curves had been considered in the context of the Hilbert polynomial computation in \cite{belding2008computing} and in the context of the modular polynomials computation in \cite{charles2005computing}, but these methods were discarded over the time because they were slower than those based on ordinary curves.   


The situation has changed with the recent interest on the connection between supersingular curves and quaternion algebras sparked by isogeny-based cryptography.

Since the work of Deuring \cite{D41}, it is known that endomorphism rings of supersingular curves in characteristic $p$ are isomorphic to maximal orders in the quaternion algebra $\QA$ ramified at $p$ and infinity, and that, conversely, every such maximal order types arises in this way. This is the first result of what is now called the \textit{Deuring correspondence}.
In this work, we are particularly interested in the task of computing the $j$-invariants of the (at most 2) supersingular elliptic curves over $\FF_{p^2}$ having a given maximal order type as endomorphism ring.

The first concrete effort to realize that task is an algorithm of Cervino \cite{cervino2004correspondence} to compute the endomorphism rings of all supersingular curves in characteristic $p$. The complexity of this algorithm is $O(p^{2 + \varepsilon})$ and it becomes rapidly impractical. This algorithm was more recently improved by Chevyrev and Galbraith in \cite{chevyrev2014constructing} but the complexity is still  $O(p^{1,5 + \varepsilon})$. 
As part of cryptanalytic efforts to understand the difficulty of various problems related to the Deuring correspondence, a heuristic algorithm of polynomial complexity in $\log(p)$ was introduced by Eisenträger, Hallgren, Lauter, Morrison and Petit \cite{EHLMP18} . This algorithm builds upon the previous works of Kohel, Lauter, Petit and Tignol \cite{KLPT14} and Galbraith, Petit and Silva \cite{GPS17}. 

More concretely, these works prove that an isogeny can be efficiently computed between two supersingular curves of known endomorphism ring by translating the problem over the quaternions with the Deuring correspondence, solving the translated problems over quaternions, before translating back the solution as an isogeny. This can be applied directly to compute the $j$-invariants of all curves with an endomorphism ring contained in a given maximal order type by using one starting curve $E_0$ of known endomorphism ring (such a curve can always be computed efficiently with the CM method).

Recently, in \cite{robert2022some}, Robert introduced new algorithms to compute modular polynomials from supersingular curves, this time using the high-dimensional isogeny techniques recently introduced in the context of isogeny-based cryptography cryptanalysis.  
Like us, Robert has a direct algorithm and a CRT algorithm. The direct algorithm has a quadratic complexity in the level $\ell$, just as our direct algorithm, but the asymptotic complexity is somewhat differently balanced.
The CRT algorithm reaches the same complexity has Sutherland's. 
Robert algorithm have the very nice advantage to have a proven complexity without any heuristic, but they require more memory, in particular the CRT algorithm.

\paragraph*{Contributions.}

Our main contribution is to revisit previous methods to compute Hilbert and modular polynomials from supersingular curves with the recent algorithmic progress on the Deuring correspondence. 

Our main sub-routine aims at translating a set of isomorphism class of maximal orders into their corresponding supersingular $j$-invariants under the Deuring correspondence. We introduce two algorithms, with different performance profiles, to perform that task. 

With these new algorithms, we obtain an improvement over the asymptotic complexity of the class and modular polynomials computation in a wide range of primes below some upper-bounds that depend either on the discriminant of the class polynomial or the level of the modular polynomial. Moreover, we show that our new algorithm can also be used in the CRT method to reach the same complexity as ordinary curves, but with possibly better practical efficiency.

\subsection{Technical overview}
\label{sec: technical overview}

We start by looking at our main-subroutine that consists in the computation of the $j$-invariants of supersingular elliptic curves corresponding to some set of maximal order isomorphism classes (called maximal order types, see \cref{def: type}). In the rest of this article, unless specified otherwise, a curve is considered to be a supersingular elliptic curve.   

\paragraph*{Maximal orders to $j$-invariants.}
We propose two algorithms dedicated to that task. Let consider that a set $\frakS$ of types is given in input, together with some prime $p$. 

Our first algorithm is called $\OrdersToJSmall{}$ and it consists merely in a sequential execution of a sub-algorithm from \cite{EHLMP18} that performs the desired translation for one type of maximal order. When everything is done carefully, it can be executed in $O(\log(p)^{5 + \varepsilon})$ under experimentally verified heuristics detailed in \cite{KLPT14} and related to the probability for numbers represented by some quadratic form to be prime. Thus, since $\OrdersToJSmall{}$ consists in $\# \frakS$ executions of this sub-algorithm, the total heuristic complexity of \OrdersToJSmall{} is $O(\# \frakS \log(p)^{5 + \varepsilon})$. For a generic $p$ and set of maximal order type $\frakS$, we do not know how to do better than that. However, when $\frakS$ contains almost all maximal order types (the maximal size being upper-bounded by the number of supersingular curves), it becomes sub-optimal due to the amount of redundant computation performed along the way. In that case, it becomes much more practical to use an algorithm designed to sieve through the entire set of types, only focusing on the ones in $\frakS$ when they're met along the way. It requires a bit of care to perform this task in the most efficient manner but it can be done, and this leads to the algorithm $\OrdersToJBig{}$ of complexity $O(\#\frakS \log(p)^{2 + \varepsilon} + p \log(p)^{1 + \varepsilon})$. This algorithm requires one heuristic that we detail in \cref{sec: order to jinv computation}, as \cref{claim: number of maximal orders}. It is related to the expansion property of the supersingular isogeny graphs.   

We stress that both algorithms are designed to work (and analyzed) for a generic prime $p$ which is why they are so interesting. 

\paragraph*{The direct algorithms.} Our  heuristic algorithms \OrdersToJSmall{} and \OrdersToJBig{} can be used directly to compute the roots of class and modular polynomials modulo $p$ (under the assumption that these roots are supersingular). The method is pretty straightforward: find the maximal order types corresponding to the desired roots, then, compute them with either \OrdersToJSmall{} or \OrdersToJBig{}. 
With the complexity we have stated, this is already enough to obtain an asymptotic improvement over existing generic methods  when $p$ is not too big (compared to the discriminant or level of the associated class or modular polynomial).

If we write $S$ the "degree" of the polynomial (it is $h(D)= O(\sqrt{|D|} \log(D)^\varepsilon)$ for Hilbert polynomials of discriminant $|D|$ and $O(\ell^2)$ for modular polynomials of level $\ell$), then we obtain the following complexity with \OrdersToJSmall{}: 
$$ O( S \log^{5 + \varepsilon} p + S \log^{2 + \varepsilon} S  \log p).$$ 

With \OrdersToJBig{}, the complexity becomes 
$$ O(S \log^{2 + \varepsilon} p + p \log^{1 + \varepsilon} p+  S \log^{2 + \varepsilon} S \log p).$$ 

In both cases, the latter term comes from the polynomial reconstruction step that must be performed to recover the polynomial from its roots. Note that the size of the output is $O(S \log p)$. In terms of space, the requirement is quasi-optimal in both cases: so $O(S^{1+ \varepsilon} \log p)$.  

It is clear that the second algorithm will be better when $p = O(S \log(S))$. However, whenever $S = o(p)$ (which is often the case in applications), it will be better to use the variant with \OrdersToJSmall{}. 





\paragraph*{The CRT for class polynomials.}
Let us take a prime $p$ and a discriminant $D <0$. We want an efficient algorithm to compute $H_D(X) \mod p$. 
Our main algorithm is essentially the same as the one introduced by Belding, Bröker, Enge and Lauter in \cite{belding2008computing}, and later improved by  Sutherland in \cite{sutherland2011computing} .

Let us write $\frakO$ for the quadratic imaginary order of discriminant $D < 0$. 
We may assume that the factorization of $D$ is known as computing it is negligible compared to the rest of the computation. 
We define $\primesO{}$ to be a set of primes. 
We write $B_D$ for the bound on the bit-size of the coefficients of $H_D$ over $\ZZ$. 

Here is how the algorithm works:
\begin{enumerate}
    \item Select some primes $p_1 ,\ldots,p_n$ in $\primesO$ with $\prod_{i=1}^n p_i > 2^{B_D}$.
    \item Compute a suitable representation of $\Cl(D)$.
    \item For each $p_i \in \primesO$:
    \begin{enumerate}
        \item Compute the coefficients of $H_D \mod p_i$. 
        \item Update CRT sums for each coefficient of $H_D$.
    \end{enumerate}
    \item Recover the coefficients of $H_D \mod p$. 
\end{enumerate}

The main difference between the several variants of the CRT approach (including ours) lies is in the choice of the set $\primesO$ and in the way to compute $H_D \mod p_i$. In \cite{belding2008computing}, different algorithms were proposed to handle the distinct cases of split and inert $p_i$. 
In both cases, the $H_D \mod p_i$ are constructed from their roots. These roots are always $j$-invariants of elliptic curves in characteristic $p_i$, but this is where the similarity ends. 
In the former case, the elliptic curves are ordinary and are defined over $\FF_{p_i}$, whereas in the latter case, we obtain supersingular elliptic curves defined over $\FF_{p_i^2}$. The ordinary and supersingular case are very different and the resulting algorithms are also very different. 

Sutherland \cite{sutherland2011computing} improved the method from \cite{belding2008computing} for split primes by a careful choice of the primes and other tricks to improve the computation of $H_D \mod p_i$. 


We improve the method presented in \cite{belding2008computing} for non-split primes by making use of the Deuring correspondence with our \OrdersToJBig{} algorithm to replace Cervino's algorithm \cite{cervino2004correspondence}. 


\paragraph*{The CRT for modular polynomials.} 
Let $\ell$ be a prime distinct from $p$. We want a CRT approach to compute  $\Phi_\ell(X,Y) \mod p$. It can be done in a very similar fashion to class polynomials by computing $\Phi_\ell \mod p_i$ for some $p_i$ in a set $\primesell{}$ and reconstruct the final polynomial via the CRT. 

Bröker, Lauter and Sutherland (BLS in the rest of this article) in \cite{broker2012modular} proposed to use primes of the form $(t^2 - 4 v^2 D \ell^2)$ with $t,v,D \in \NN$ for which there is a very specific volcano structure involving $\ell$-isogenies. This structure implies the existence of two distinct sets of ordinary curves defined over $\FF_{p_i}$: the curves with endomorphism ring isomorphic to $\frakO$ for some quadratic imaginary order $\frakO$ of discriminant $D$ and class number bigger than $\ell+2$ and the curves with endomorphism ring isomorphic to $\ZZ + \ell \frakO$. Since the latter are $\ell$-isogenous to the former, it is possible to recover the full $\Phi_\ell \mod p_i$ by computing the $j$-invariants corresponding to these two sets of curves. 
The volcano structure allows for efficient computation by minimizing the number of $\ell$-isogeny computations. 

For supersingular curves, the choice of primes is even easier than for class polynomials: we can use any primes $p_i$ that is big enough. As long as the number of supersingular curves is bigger than $\ell+2$ we will be able to recover the full modular polynomial. 
This idea has already been considered by Charles and Lauter in 2005 \cite{charles2005computing} but in a rather direct way (where each of the $\ell$-isogeny involved is computed using the Vélu formulas).  

We prove that using the Deuring correspondence and \OrdersToJBig{}, we can avoid entirely any $\ell$-isogeny computation and minimize the cost of elliptic curve operations.  

\paragraph*{Generic improvements to the CRT method.} There are several ways to improve the CRT method in practical applications. First, alternative class polynomials and modular functions (with smaller height bounds) can be used instead of the standard Hilbert class polynomial and modular polynomials for the same practical purpose. This was the focus of the paper \cite{enge2010class}.  

Second, for a number of applications such as the CM method and the SEA point counting algorithm, computing these polynomials is actually not necessary. What is really needed is the ability to evaluate them. Sutherland showed \cite{sutherland2012accelerating,sutherland2013evaluation} that it was possible to do better than compute-then-evaluate in both cases, providing, in particular, an additional improvement in terms of memory requirement for those applications. 

Using supersingular curves rather than ordinary ones should not prevent from applying all these practical improvements. For clarity's sake we focus on the simpler computation of the standard polynomials and leave to the reader the task of adapting these improvements to our new setting which should not be too daunting.

\paragraph*{Organisation of the article.} The rest of this paper is organized as follows: in \cref{sec: background}, we introduce some background on isogenies, quaternion algebras and the Deuring correspondence. Then, in \cref{sec: order to jinv computation}, we introduce our main new algorithm to compute efficiently $j$-invariants corresponding to  maximal order types. In \cref{sec: class poly computation}, we explain in details how this algorithm can be applied to the computation of class polynomial with the CRT method. 
In \cref{sec: modular poly computation}, we do the same for modular polynomials.

\paragraph*{Acknowledgement.} We thank Andrew Sutherland and anonymous reviewers for very useful feedback on this work.

\section{Background material}
\label{sec: background}



\subsection{Notations}
\label{sec: notations}

\paragraph*{Basic complexities.} 

We write $M_\ZZ(b)$ for the cost of multiplying two integers of less than $b$ bits. For asymptotic complexities we consider $M_\ZZ(b) = O(b^{1 + \varepsilon})$. For instance, this covers the complexity of all arithmetic operations in a finite field $\FF_p$ of characteristic $p$ of less than $b$ bits.

Similarly, we write $M_\PP(b)$ for the cost (in terms of arithmetic operations over $k$) of multiplying two polynomials of degree smaller than $b$ over a base field $k$. Depending on the size of $b$ we will either use $M_\PP(b) = O(b \log(b)^{1 + \varepsilon}$ or $O(b^{1 + \varepsilon})$.

Finally, the cost of fast interpolation algorithm for a polynomial of degree $b$ is $O(M(_\PP(b) \log(b))$.

\subsection{Elliptic curves, quaternion algebras and the Deuring correspondence}
\label{sec: prelim deuring}

More precise references on the topics covered in this section are: the book of Silverman \cite{S94} for elliptic curves and isogenies, the book of John Voight \cite{voight} on quaternion algebras and theoretical aspects of the Deuring correspondence, the thesis of Antonin Leroux \cite{leroux2022quaternion} for the algorithmic aspects of the Deuring correspondence. 

\paragraph*{Supersingular elliptic curves and isogenies.}

An \textit{isogeny} $\varphi : E_1 \rightarrow E_2$ is a non-constant morphism sending the identity of $E_1$ to that of $E_2$. The degree of an isogeny is its degree as a rational map (see \cite{S09} for more details). When the degree $\deg(\varphi)=d$ is coprime to $p$, the isogeny is necessarily \textit{separable} and  $d  = \# \ker \varphi$.  An isogeny is said to be cyclic when its kernel is a cyclic group. The Vélu formulas \cite{V71} can be used to compute any cyclic isogeny from its kernel.  
For any $\varphi : E_1 \rightarrow E_2$, there exists a unique dual isogeny $\hat{\varphi}: E_2 \rightarrow E_1$, satisfying $\varphi \circ \hat{\varphi} = [\deg(\varphi)]$. 

\paragraph*{Endomorphism ring.}
An isogeny from a curve $E$ to itself is an \textit{endomorphism}. The set $\End(E)$ of all endomorphisms of $E$ forms a ring under addition and composition.
For elliptic curves defined over a finite field $\FF_q$, $\End(E)$ is isomorphic either to an order of a quadratic imaginary field or a maximal order in a quaternion algebra. In the first case, the curve is said to be \textit{ordinary} and otherwise \textit{supersingular}. We focus on the supersingular case in this article.
Every supersingular elliptic curve defined over a field of characteristic $p$ admits an isomorphic model over $\mathbb{F}_{p^2}$. It implies that there only a finite number of isomorphism class of supersingular elliptic curves. The Frobenius over $\FF_p$ is the only inseparable isogeny between supersingular curves and it has degree $p$. We write $\pi : E \rightarrow E^p$. For any supersingular curve $E$, the property $\End(E) \cong \End(E^p)$ is satisfied but we have $E \cong E^p$ if and only if $E$ has an isomorphic model over $\FF_p$. 

\paragraph*{Quaternion algebras.}

For $a,b \in \QQ^\star$ we denote by $ H(a,b) = \QQ+i\QQ+j\QQ+k\QQ$ the quaternion algebra over $\QQ$ with basis $1,i,j,k$ such that $i^2=a$, $j^2=b$ and $k=ij=-ji$.
Every quaternion algebra has a canonical involution that sends an element $\alpha = a_1 + a_2 i + a_3 j +a_4 k$ to its conjugate $\overline{\alpha} =  a_1 - a_2 i - a_3 j - a_4 k$. We define the \textit{reduced trace} and the \textit{reduced norm} by $tr(\alpha) = \alpha + \overline{\alpha} \enspace$ and $ n(\alpha) = \alpha \overline{\alpha}$. 

\paragraph*{Orders and ideals.}
A \textit{fractional ideal} $I$ of a quaternion algebra $\mathcal{B}$ is a $\ZZ$-lattice of rank four contained in $\mathcal{B}$. We denote by $n(I)$ the \emph{norm} of $I$, defined as the $\ZZ$-module generated by the reduced norms of the elements of $I$.  

An order $\O$ is a subring of $\mathcal{B}$ that is also a fractional ideal. Elements of an order $\O$ have reduced norm and trace in $\ZZ$. An order is called \textit{maximal} when it is not contained in any other larger order. A suborder $\frakO$ of $\O$ is an order of rank 4 contained in $\O$. 

In this work, we will work with isomorphism classes of maximal orders in some quaternion algebra $\mathcal{B}$ and this is why we introduce the notion of type. 

\begin{definition}
\label{def: type}
The \textit{type} of an order $\O$ written $\Typ \O$ is the isomorphism class of $\O$.
\end{definition}

The left order of a fractional ideal is defined as $\O_L(I) = \{\alpha \in \mathcal{B}_{p,\infty}\;|\; \alpha I \subset I \}$ and similarly for the right order $\O_R(I)$. A fractional ideal is \textit{integral} if it is contained in its left order, or equivalently in its right order. An integral ideal is \textit{primitive} if it is not the scalar multiple of another integral ideal. We refer to integral primitive ideals hereafter as ideals. 

The product $IJ$ of ideals $I$ and $J$ satisfying $\O_R(I) = \O_L(J)$ is the ideal generated by the products of pairs in $I \times J$. It follows that $IJ$ is also an (integral) ideal and $\O_L(IJ) = \O_L(I)$ and $\O_R(IJ) = \O_R(J)$. The ideal norm is multiplicative with respect to ideal products. An ideal $I$ is invertible if there exists another ideal $I^{-1}$ verifying $I I^{-1} = \O_L(I) = \O_R(I^{-1})$ and $I^{-1} I = \O_R(I)=\O_L(I^{-1})$.
The conjugate of an ideal $\overline{I}$ is the set of conjugates of elements of $I$, which is an ideal satisfying $I\overline{I} = n(I)\O_L(I) $ and $\overline{I}I = n(I)\O_R(I)$.

We define an equivalence on orders by conjugacy and on left $\O$-ideals by right scalar multiplication.  Two orders $\O_1$ and $\O_2$ are equivalent if there is an element $\beta \in \mathcal{B}^\star$ such that $\beta \O_1  = \O_2 \beta$. Two left $\O$-ideals $I$ and $J$ are equivalent if there exists $\beta \in \mathcal{B}^\star$, such that $I = J \beta$. If the latter holds, then it follows that $\O_R(I)$ and $\O_R(J)$ are equivalent since $\beta \O_R(I) = \O_R(J) \beta$. For a given $\O$, this defines equivalences classes of left $\O$-ideals, and we denote the set of such classes by Cl$(\O)$. 


\paragraph*{The Deuring correspondence} is an equivalence of categories between isogenies of supersingular elliptic curves and the left ideals over maximal order $\O$ of $\mathcal{B}_{p,\infty}$, the unique quaternion algebra ramified at $p$ and $\infty$, inducing a bijection between conjugacy classes of supersingular $j$-invariants and maximal orders (up to equivalence) \cite{K96}. Moreover, this bijection is explicitly constructed as $E \rightarrow \End(E)$. Hence, given a supersingular curve $E_0$ with endomorphism ring $\O_0$, the pair $(E_1,\varphi)$, where $E_1$ is another supersingular elliptic curve and $\varphi : E_0 \rightarrow E_1$ is an isogeny, is sent to a left integral $\O_0$-ideal. The right order of this ideal is isomorphic to $\End(E_1)$. 
One way of realizing this correspondence is obtained through the kernel ideals defined in \cite{W69}. Given an integral left-$\O_0$-ideal I, we define the kernel of $I$ as the subgroup $$E_0[I]= \lbrace P \in E_0(\overline{\mathbb{F}}_{p^2}): \alpha(P)= 0 \mbox{ for all } \alpha \in I \rbrace.$$ To $I$, we associate the isogeny $$ \varphi_I : E_0 \rightarrow E_0/E_0[I].$$
Conversely, given an isogeny $\varphi$, the corresponding \textit{kernel ideal} is $$I_\varphi = \lbrace \alpha \in \O_0 \;:\; \alpha(P) = 0 \mbox{ for all } P \in \textnormal{ker}(\varphi) \rbrace.$$
In \cref{tab: deuring correspondence}, we recall the main features of the Deuring correspondence. 

\begin{table}[]
\centering
\begin{tabular}{ l @{\hspace{2em}} l }
\hline
  Supersingular $j$-invariants over $\mathbb{F}_{p^2}$ & Maximal orders in $\QA$  \\
  $j(E)$ (up to Galois conjugacy) & $\order \cong \textnormal{End}(E)$ (up to isomorpshim)\\
  \hline
  $(E_1,\varphi)$ with $\varphi: E \rightarrow E_1$ & $I_\varphi$ integral left $\order$-ideal and right $\O_1$-ideal\\
    \hline
  $\theta \in \End(E_0)$ & Principal ideal $\O \theta$ \\
  \hline
  deg$(\varphi)$ & $n(I_\varphi)$   \\
  \hline
  $\hat{\varphi}$ & $\overline{I_\varphi}$ \\
  \hline
  $\varphi : E\rightarrow E_1,\psi : E \rightarrow E_1$ & Equivalent Ideals $I_\varphi \sim I_\psi$ \\
  \hline
  Supersingular $j$-invariants over $\mathbb{F}_{p^2}$ & $\Cl(\O)$ \\
  \hline
  $\tau \circ \rho : E \rightarrow E_1 \rightarrow E_2 $ & $I _{\tau \circ \rho} = I_\rho \cdot I_\tau $ \\
  \hline
\end{tabular}
\caption{The Deuring correspondence, a summary from \cite{FKLPW20}. 
\label{tab: deuring correspondence}}
\end{table}

\paragraph*{Effective Deuring correspondence}

After establishing the nice theoretical results of the Deuring correspondence, it is natural to ask if we can obtain efficient algorithms to perform the translation between the two sides of our correspondence. This trend of work was started by Kohel, Lauter, Petit and Tignol in \cite{KLPT14}, and developed By Galbraith, Petit and Silva in \cite{GPS17}. 
In \cite{EHLMP18}, Eisentrager, Haller, Lauter, Petit and Morrison provided the first complete picture of the situation (at least heuristically). It turns out that if we start from the quaternion side (either as a maximal order or an ideal), there are polynomial-time algorithms to compute the corresponding element (j-invariant, or isogeny). In particular, Eisentrager et al. introduced a heuristic polynomial-time algorithm that computes the $j$-invariant corresponding to a maximal order type given in input. Henceforth, we call this algorithm \OrderToJ{}. It will be a crucial building block in one of our algorithm.

\section{Computing $j$-invariants corresponding to maximal orders.}
\label{sec: order to jinv computation}

Let us fix some prime number $p$. 

In this section, we introduce two algorithms to compute the $j$-invariants of supersingular curves over $\FF_{p^2}$ corresponding to a set $\frakS$ of maximal order types in $\QA$.
By the Deuring correspondence, we know that each maximal order type in $\QA$ corresponds to one or two $j$-invariants of supersingular curve over $\FF_{p^2}$.

We will explain in \cref{sec: hash} how to represent efficiently maximal order types as elements in some set $\cH$. Concretely, the input $\frakS$ to our algorithms will be given as some subset of $\cH$. 

Our two algorithms presented in \cref{sec: orders to jinv generic,sec: orders to jinv big set} target two opposite situations with respect to the relative size of $p$ and $\frakS$. The first algorithm is called \OrdersToJSmall{}. As the name suggests, it targets the case where $\# \frakS/p$ is "small". It is a quite direct application of standard results on the effective Deuring correspondence, and handles each maximal order independently.   
The asymptotic complexity is $O(\# \frakS \log p^{5+ \varepsilon})$ and works for any prime $p$ and set $\frakS$.   

The second algorithm is called \OrdersToJName{} and it is more involved in both design and analysis. It targets the case where $\frakS$ is made of a significant portion of all $O(p)$ possible types and is based on the idea that since $\frakS$ is big enough, the strategy that consists in going through the entire supersingular isogeny graph, collecting the $j$-invariants we want along the way, is quite optimal. 
Its complexity is $O(\frakS \log p^{2 + \varepsilon} + p \log p^{1+\varepsilon})$. Hence, the cutoff between the two methods will be for some $\frakS$ with $\# \frakS  =\Theta(p/ \log p^{2+ \varepsilon})$.
Note that our second algorithm  will be optimal when $p/\# \frakS = \Theta(\log p^{1 + \varepsilon})$. 

\subsection{Hashing to maximal order types}
\label{sec: hash}

One of the important point for making our algorithms practical is to have a good way to handle sets of maximal order types and test if a type belong in some set of types. 

For any maximal order $\O$, we will represent $\Typ \O$ by an invariant $H(\O)$. The purpose of this section is to introduce an efficiently computable invariant $H(\O)$ and the corresponding function $H$. 

To derive an invariant for an isomorphism classes of lattices, it is quite natural to look at the smallest elements of that lattice. This idea was introduced by Chevyrev and Galbraith \cite{chevyrev2014constructing} in a related context, and their result was recently strengthened by Goren and Love \cite{goren2023elements}. Let us take a maximal order $\O$. It can be shown (see \cite{chevyrev2014constructing} and \cite[Theorem 1.4]{goren2023elements}) that if $x_1,x_2,x_3$ realize the successive minimas of the Gross lattice $\O^T = \lbrace 2 x - \tr(x) |  x \in \O \rbrace$, then the values $n_1,n_2,n_3$ where $n_i = \norm{x_i}$ for $i=1,2,3$ determine uniquely the maximal order $\O$. 

This is enough to obtain an invariant of size $O(\log p)$ as it can be shown that $\log (n_i) = O(\log p)$ for all $i \in \lbrace 1,2,3 \rbrace$. 
If needed, for compactness, one can then apply some kind of hash function $h : \lbrace 0,1 \rbrace^{*} \rightarrow \cH$ where $\cH$ is big enough to make the probability of a collision negligible over all maximal order types in $\QA$. 

For a generic statement, we take an arbitrary function $h$ (which might be the identity) and assume its computational cost is negligible. Then, we define $H$ as:

\begin{enumerate}
    \item Compute $\O^T$.
    \item Compute the three successive minimas $x_1,x_2,x_3$ of $\O^T$.
    \item Output $H(\O) = h\left(\norm{x_1},\norm{x_2},\norm{x_3}\right)$.  
\end{enumerate}

\begin{proposition}
\label{prop: hash complexity}
The hash function $H$ presented above can be computed in $O(\log(X)^{1+\varepsilon})$ when all the coefficients in the decomposition of the basis of $\O$ over $\G{1,i,j,k}$ are smaller than $X$. 
\end{proposition}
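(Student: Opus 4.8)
The plan is to bound the cost of each of the three steps in the definition of $H$, using standard lattice-reduction machinery. Throughout, let $X$ denote the bound so that every coefficient appearing in the decomposition of the given $\ZZ$-basis of $\O$ over $\G{1,i,j,k}$ is at most $X$ in absolute value; since $\O$ is a maximal order in $\QA$, we also have $p = O(X)$ (the discriminant of $\O^T$ is essentially $p^2$, so $p$ cannot be larger than a polynomial in $X$), hence $\log p = O(\log X)$ and any factor polynomial in $\log p$ is absorbed into the $O(\log(X)^{1+\varepsilon})$ bound.

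First I would handle Step 1: computing the Gross lattice $\O^T = \{2x - \tr(x) \mid x \in \O\}$. Given the basis $\beta_1,\dots,\beta_4$ of $\O$, a basis of $\O^T$ is obtained by applying the $\ZZ$-linear map $x \mapsto 2x - \tr(x)$ to each $\beta_m$; each $\tr(\beta_m)$ is a small integer combination of the input coefficients, so the resulting basis of $\O^T$ again has coefficients of size $O(X)$ over $\G{1,i,j,k}$, and computing it costs $O(\log(X)^{1+\varepsilon})$ bit operations. I would also record the Gram matrix of $\O^T$ with respect to the reduced norm form; its entries are $O(X^2)$ and computing them is again within budget. Second, Step 2: computing the three successive minima $x_1,x_2,x_3$ of $\O^T$. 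The key point is that $\O^T$ is a rank-4 (indeed, it lies in the rank-3 trace-zero part, but in any case a lattice of fixed rank $\le 4$), and for lattices of fixed dimension one can compute \emph{all} successive minima and short vectors realizing them in time polynomial in the bit-size of the Gram matrix — e.g. by an exact enumeration / Fincke--Pohst style algorithm, whose running time in fixed dimension is polynomial in $\log(\text{disc})$ and $\log$ of the entry sizes. Since the Gram entries are $O(X^2)$, this costs $O(\log(X)^{1+\varepsilon})$. Finally, Step 3 is the evaluation of $h$ on the triple $(\norm{x_1},\norm{x_2},\norm{x_3})$; each $n_i = \norm{x_i}$ is computed from $x_i$ by one norm-form evaluation, $n_i = O(\text{poly}(p)) = O(\text{poly}(X))$ so $\log n_i = O(\log X)$, and by hypothesis the cost of $h$ is negligible. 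Summing the three steps gives the claimed $O(\log(X)^{1+\varepsilon})$.

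The main obstacle, and the place I would be most careful, is Step 2: justifying that the successive minima and vectors realizing them can be found in time polynomial in $\log X$ rather than merely in $\poly(X)$. This is exactly the fixed-dimension regime in which exact shortest-vector enumeration is polynomial-time — the number of lattice points inside a ball of radius $O(\lambda_3)$ is bounded in terms of the dimension alone (here $\le 4$), and $\lambda_i^2 = O(X^2)$ since $\lambda_i^2 \le \norm{\beta'}$ for any basis vector $\beta'$ of a reduced basis, and the input basis already has norm form values $O(X^2)$. One should note that we do not even need the input basis to be reduced: running LLL first on a rank-$\le 4$ lattice with Gram entries of size $O(X^2)$ costs $O(\log(X)^{1+\varepsilon})$ and produces a basis from which enumeration up to radius $\lambda_3$ visits only $O(1)$ candidate vectors. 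I would cite the relevant lattice-algorithms reference (e.g. the fixed-dimension version of Fincke--Pohst, or the results already invoked in \cite{chevyrev2014constructing,goren2023elements}) to make this rigorous, and remark that the uniqueness of $\O$ from $(n_1,n_2,n_3)$ is precisely \cite[Theorem 1.4]{goren2023elements} cited above, so that $H$ is indeed well-defined as an invariant of $\Typ\O$.
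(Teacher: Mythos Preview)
Your overall strategy is the same as the paper's: compute the Gross lattice $\O^T$, reduce it as a lattice of fixed rank to extract the successive minima, and read off the norms. Steps~1 and~3 of your analysis are fine.

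The gap is in Step~2. You assert that ``running LLL first on a rank-$\le 4$ lattice with Gram entries of size $O(X^2)$ costs $O(\log(X)^{1+\varepsilon})$'', and then enumerate. But classical LLL in fixed dimension only achieves $O((\log X)^{2+\varepsilon})$ bit operations: the number of swaps is $O(\log X)$ and each swap manipulates integers of $O(\log X)$ bits, so even with fast multiplication the total is quadratic in $\log X$, not quasi-linear. Fincke--Pohst enumeration after a reduced basis does visit only $O(1)$ candidates in fixed dimension, but that does not help if the preceding reduction already costs $(\log X)^{2+\varepsilon}$. So as written your argument proves $O(\log(X)^{2+\varepsilon})$, not the claimed exponent.

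The paper closes this gap by invoking two specific quasi-linear-time tools: Eisenbrand's algorithm for reducing ternary quadratic forms \cite{eisenbrand2001fast} (note that $\O^T$ lives in the trace-zero space, so the relevant form is genuinely ternary), or alternatively the $\tilde{L}^1$ lattice-reduction algorithm of Novocin--Stehl\'e--Villard \cite{novocin2011lll}, both of which run in time $O((\log X)^{1+\varepsilon})$ on inputs of this size in fixed dimension. Citing one of these in place of ``LLL then Fincke--Pohst'' is exactly what is needed to recover the stated bound; the rest of your argument then goes through.
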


\begin{proof}
This can be achieved by computing the successive minimas of $\O^T$ with the algorithm to reduce ternary quadratic form of \cite{eisenbrand2001fast}, or the $\Tilde{L}^1$ algorithm from \cite{novocin2011lll} to perform lattice reduction in small dimension. 
\end{proof}

\subsection{Matching maximal orders and $j$-invariant: a generic algorithm.}
\label{sec: orders to jinv generic}

Let us fix a prime $p$. 
We assume that a function $H$ as introduced in \cref{sec: hash} is defined and we assume that the underlying hash function $h$ is such that there is no collisions over all maximal order types in $\QA$.   

The algorithm \OrdersToJSmall{} is easily derived from the ideal-to-isogeny algorithm described as \cite[Algorithm 12]{EHLMP18} and whose main building blocks where first introduced in \cite{GPS17}. We provide a somewhat detailed description of this algorithm below. We refer to \cite{GPS17,eriksen2023deuring} for a description of the main algorithmic building blocks. 

\begin{enumerate}
    \item Compute a curve $E_0$ and a maximal order $\O_0$ with $\End(E_0) \cong \O_0$.
    \item Set $M = \emptyset$.   
    \item For each $\O \in \frakS$ do:
    \item \qquad Compute an ideal $I$ of powersmooth norm connecting $\O_0$ and $\O$
    \item \qquad Translate that ideal into the corresponding isogeny $\varphi_I$. 
    \item \qquad Compute the $j$-invariant $j_\O$ of the codomain of $\varphi_I$.
    \item \qquad $M = M \cup \lbrace H(\O), j_\O \rbrace$.
    \item Return $M$.   
\end{enumerate}

Under some heuristic assumptions regarding the distribution of numbers represented by quadratic forms (see \cite{KLPT14}), the complexity of this algorithm is $O( \# \frakS \log p^{5+\varepsilon})$.

For a concrete implementation, we propose to use the approach described in \cite{eriksen2023deuring}.

\subsection{Maximal orders to $j$-inv: a faster algorithm when $\# \frakS \approx p$. }
\label{sec: orders to jinv big set}

 \OrdersToJSmall{} treats independently each order in $\frakS$. This is fine when $\frakS$ is quite small compared to the number of maximal order types in $\QA$, but it becomes less and less optimal as $\#\frakS/p$ increases. Indeed, we end up redoing a lot of computations in that manner.  

To overcome this problem when the ratio $\#\frakS/p$ increases, we can try to mutualize as much computation as possible by using an approach that explores the entire isogeny graph and only targets the specific elements of $\frakS$ along the way. Since, our exploration of the isogeny graph is completely generic, we obtain an algorithm with a much better amortized cost per maximal order type.    

More concretely, our idea is the following: take a smooth degree $L$ such that all supersingular curves are $L$-isogenous to some starting curve $E_0$ of endomorphism ring in $\Typ \O_0$ for some maximal order $\O_0$. Compute all the $\O_0$-ideals of norm $L$ and their right orders and select the ones contained in the set $\frakS$. Then, enumerate efficiently through all the corresponding isogenies of degree $L$ and collect the $j$-invariants of the codomains. 
Since quaternion operations cost less, we will do the exhaustive part over the quaternions, while minimizing the cost of elliptic curve operations by "selecting" the $L$-isogenies than we cannot avoid to compute. 

Note that we do the ideal and isogeny phases in a simultaneous manner in \OrdersToJName{}. The good complexity we obtain will come from the care we take in computing all the required isogenies in the most efficient way possible, and in particular to avoid as many useless isogeny computations as possible.  
For that, the choice of $L$ will be very important. In particular, if $L = L_1 L_2$, by factorization of isogenies, we can compute all relevant $L$-isogenies by computing all $L_1$-isogenies and only a subset of all the $O(L_1 L_2)$ $L_2$-isogenies. This subset obviously depends on $\frakS$ and the $L_1 L_2$-isogeny computations account for the 
$\frakS \log p^2$ terms in the complexity (because we will choose $L_2$ to be smooth). 
The $p \log p$ covers the costs of the quaternion operations (which does not depend on $\frakS$ since we cover all maximal order types) and $L_1$ isogeny computations. 

Before describing the algorithm in itself, we need to provide several properties and one heuristic claim that are going to be crucial for analyzing the algorithm complexity and proving its correctness.

\paragraph*{Preliminary results and a heuristic assumption.} Our first results target the degree $L$ of the isogenies that we will use. As we explained above, this degree is crucial for optimizing the algorithm. To enable the fast computation of a lot of $L$-isogenies at the same time, we need it to be power-smooth (for efficient application of the Vélu formulas), but with coprime factors that are not too small.   

Let us write $\Phi(N)$ the number of cyclic isogenies of degree $N$ for any $N \in \NN$. 

\begin{lemma}
\label{claim: log factor basis}
There exists a bound $B$ and constants $B_0,C_0,  1 < C_1 < C_2$ such that for every number $N > B$, there exists a value $n < B_0 \log(N)$ and integers $(e_i)_{1 \leq i \leq n}$
 such that, if we define $\ell_i$ to be the $(n-i+1)$-th smallest prime and $L_i = \ell_i^{e_i}$ then we have $\sqrt{C_0 \log(N)} \leq \Phi(L_i) < C_0 \log(N)$ for all $1 \leq i \leq n-1$, $(C_0/2) \log(N) \leq \Phi(L_n) < C_0 \log(N) $ and $C_1 N \leq \prod_{i=1}^n \Phi(L_i) < C_2 N$. 
\end{lemma}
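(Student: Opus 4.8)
The plan is to use the explicit formula $\Phi(\ell^e)=\ell^{e-1}(\ell+1)$ for a prime power, and to build the decomposition by choosing the $L_i$ so that each $\Phi(L_i)$ lies in the window $[A_N,B_N)$ with $A_N:=\sqrt{C_0\log N}$ and $B_N:=C_0\log N$ (so $B_N=A_N^2$), while steering the product of the $\Phi(L_i)$ into $[C_1N,C_2N)$. Since $B,B_0,C_0,C_1,C_2$ are ours to choose, I would take $C_1=2$, $C_2=4$, $C_0$ a large absolute constant, and $B$ large enough. First I would record the elementary ingredients. \emph{(i)} For every prime $\ell$ with $\ell+1<B_N$ there is an exponent $e(\ell)\ge 1$ with $\Phi(\ell^{e(\ell)})\in[A_N,B_N)$: the sequence $\ell+1,\ell(\ell+1),\ell^2(\ell+1),\dots$ has consecutive ratio $\ell$ and tends to infinity, so if $\ell+1\ge A_N$ take $e=1$, and otherwise $\ell<A_N-1<B_N/A_N$, so the sequence cannot step over $[A_N,B_N)$; choosing $e(\ell)$ minimal one moreover has $\Phi(\ell^{e(\ell)})<B_N$. \emph{(ii)} For $\ell=2$ the exponent $e^\ast$ maximal with $3\cdot 2^{e^\ast-1}<B_N$ yields $\Phi(2^{e^\ast})\in[B_N/2,B_N)$ — this is the slot $L_n$, which explains the relaxed lower bound $(C_0/2)\log N$. \emph{(iii)} By Chebyshev and Mertens, $\pi(x)=\Theta(x/\log x)$ and $\log\prod_{\ell\le x}\ell=(1+o(1))x$, hence $\prod_{\ell\le x}(\ell+1)=e^{(1+o(1))x}$.

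The main construction: use the first $n$ primes $2=q_1<q_2<\dots<q_n$ with $q_n$ to be fixed around $\log N$, so that $n=\pi(q_n)=\Theta(\log N/\log\log N)=o(\log N)<B_0\log N$; relabel $\ell_i=q_{n-i+1}$ as in the statement, so $\ell_1=q_n$ and $\ell_n=2$. I would set $L_n=2^{e^\ast}$ as in \emph{(ii)}, reserve the two slots carrying the primes $3$ and $5$ as ``tuning knobs'', and give every remaining prime $\ell$ its minimal valid exponent $e(\ell)$ from \emph{(i)}, so $\Phi(L_i)\in[A_N,B_N)$ for $1\le i\le n-1$. Write $R$ for the product of $\Phi(L_n)$ with all the minimal-exponent slots (everything but the two knobs). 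Incrementing $n$ multiplies $R$ by $\ell+1$ for a prime $\ell\in[A_N,q_n]$ (the $\pi(A_N)=N^{o(1)}$ primes below $A_N$ contribute only an $N^{o(1)}$ factor in total and may be kept in $R$ always), so the multiplicative step of $R$ is at most $q_n+1=O(\log N)=O(B_N/C_0)$, small compared with $B_N$ once $C_0$ is large. Meanwhile, as $(e_3,e_5)$ range over their admissible values, the knob product $\Phi(3^{e_3})\Phi(5^{e_5})$ sweeps, with resolution governed by the set $\{3^a5^b\}$, across the whole range $[A_N^2,B_N^2)=[B_N,B_N^2)$.

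The steering is then done in two steps. Choose $n$ so that $R$ lands in the window $(\,9C_2N/B_N^2,\ C_1N/(9B_N)\,]$; this window is nonempty for $N$ large (it amounts to $B_N>81C_2/C_1$), has multiplicative width $(C_1/(81C_2))B_N$, and this exceeds the step $O(B_N/C_0)$ of $R$ once $C_0>81C_2/C_1$ — hence such an $n$ exists, and $R$ does sweep through this window as $n$ grows since it starts $\ll N/B_N^2$ and ends $\gg N$. For that $n$, the target interval $[C_1N/R,\ C_2N/R)$ for the knob product lies inside $[9B_N,\ B_N^2/9)$, i.e.\ with a factor-$9$ margin from both ends of $[B_N,B_N^2)$, and has multiplicative width $C_2/C_1=2$. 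In this interior region the achievable values $\Phi(3^{e_3})\Phi(5^{e_5})$ (namely $24$ times an element of the truncated box $\{3^a5^b:a\le a_{\max},b\le b_{\max}\}$) have consecutive ratios at most $9/5<2$, so the target interval contains an admissible knob product; this fixes $e_3,e_5$ and completes the decomposition. Then all the assertions hold: $\Phi(L_i)\in[A_N,B_N)$ for $i\le n-1$, $\Phi(L_n)\in[B_N/2,B_N)$, $\prod_i\Phi(L_i)=R\cdot\Phi(3^{e_3})\Phi(5^{e_5})\in[C_1N,C_2N)$, and $n<B_0\log N$, with, say, $C_1=2$, $C_2=4$, $C_0$ a suitable large constant, $B_0=1$, and $B$ large.

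The heart of the argument — and the step I expect to be the main obstacle — is the last density claim, because near the \emph{faces} of the truncated box $\{3^a5^b:a\le a_{\max},b\le b_{\max}\}$ one only has resolution $3$ or $5$, not $9/5$; this is precisely why the factor-$9$ interior margin (hence the large $C_0$) is forced. I would make the face analysis explicit: the successor of an interior point with $a=a_{\max}$ is $3^{a_{\max}-1}5^{b+1}$ (ratio $5/3$); the successor of an interior point with $b=b_{\max}$ is $3^{a+2}5^{b_{\max}-1}$ (ratio $9/5$); and a generic interior point has successor $3^{a-1}5^{b+1}$ (ratio $5/3$); the corner $3^{a_{\max}}5^{b_{\max}}$ and the neighbouring ratio-$3$ stretch along $b=b_{\max}$ are what the margin excludes. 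Everything else is routine: the estimates in \emph{(iii)} to pin down $q_n=\Theta(\log N)$ so that $\prod(\ell+1)$ reaches size $\Theta(N)$, and Proposition-level arithmetic on prime powers for \emph{(i)}–\emph{(ii)}.
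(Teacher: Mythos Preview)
Your approach shares the paper's key idea --- use the primes $3$ and $5$ as fine-tuning knobs --- but organizes it differently: you first pin down $n$ by steering the rest-product $R$ into a window and then choose the knob exponents, whereas the paper builds the whole product greedily with \emph{maximal} exponents and afterwards \emph{divides out} powers of $3$ and $5$ to correct an overshoot. The paper's route sidesteps any density analysis of $\{3^a5^b\}$ and instead shows directly that two division steps suffice once $C_2>225\,C_1$; your route trades a large $C_2/C_1$ for a large $C_0$.

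There is, however, a genuine gap in your density step. First, the admissible knob values are $24\cdot\{3^{a}5^{b}:a_{\min}\le a\le a_{\max},\ b_{\min}\le b\le b_{\max}\}$ --- the lower bounds come from the constraint $\Phi\ge A_N$, which you omitted --- so there is also a ratio-$3$ gap at the \emph{bottom} corner, from $24\cdot 3^{a_{\min}}5^{b_{\min}}$ to its successor $24\cdot 3^{a_{\min}+1}5^{b_{\min}}$. Second, and more seriously, the factor-$9$ margin does not exclude these gaps. Since $\Phi(3^{e_3^{\max}})\in[B_N/3,B_N)$ and $\Phi(5^{e_5^{\max}})\in[B_N/5,B_N)$, the value $24\cdot 3^{a_{\max}-1}5^{b_{\max}}$ lies in $[B_N^2/45,\,B_N^2/3)$ and can sit well inside your interior $[9B_N,B_N^2/9)$; its successor in the box is $24\cdot 3^{a_{\max}}5^{b_{\max}}$ at ratio $3$, so a width-$2$ target landing just above it hits nothing. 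The analogous problem occurs at the bottom, where the minimum value lies in $[B_N,15B_N)$. The repair is routine --- replace the margin $9$ by something like $45$ and enlarge $C_0$ accordingly --- but as written the claim ``consecutive ratios at most $9/5$ in the interior'' is false.
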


\begin{proof}

    We remind that when $\ell$ is a prime number and $e \geq 1$ is an integer, $\Phi(\ell^e) = \ell^{e-1}(1 + \ell)$.
    
    Let us take $C_0, 1 <  C_1 < C_2$ three constants. We leave these constants unspecified for now. We start our reasoning without assuming anything on those constants, and we will encounter conditions on them during the proof. At the end, we will verify that these constraints can all be satisfied.
    
    We also take a bound $B$ that we assume to be "big enough" for various asymptotic inequalities to be verified. 
    Let us finally take any integer $N> B$.

    Let us construct two integers $n,\Lambda_n$ recursively from $\Lambda_0 = 1$ in the following manner: if $\Lambda_i \geq C_1 N$, then set $n=i$, otherwise let $\Lambda_{i+1} = \Phi(\lambda_{i+1}) \Lambda_i$ with $\lambda_{i+1} =\ell_{n-i}^{e_{n-i}}$ where each $\ell_{i}$ is the $n-i+1$-th prime and $e_{i}$ is defined to be $1$ if $\Phi(\ell_i) \geq C_0 \log(N)$ or the biggest exponent such that $\Phi( \lambda_{i+1}) < C_0 \log(N)$.

    It is clear that this recursive algorithm always terminates and so we can get two integers $n,\Lambda_n$ in that manner for any $N$. Moreover, we have $C_0/2 \log(N) \leq \Phi(\lambda_1)$ since $\ell_1 =2$ and $\sqrt{C_0 \log(N)} \leq \Phi(\lambda_i)$ for all other $i$ (because if $\ell_i \leq \Phi(\lambda_i) < \sqrt{C_0 \log(N)}$, then $\ell_{n-i} \Phi(\lambda_i) = \Phi( \ell_{n-i} \lambda_i ) \leq C_0 \log(N)$ and this is impossible by construction of the exponent $e_{n-i}$).

   Then, there are two possibilities: either $\Lambda_n < C_2 N$ or $\Lambda_n \geq C_2 N$. 

   \paragraph*{First case: $\Lambda_n < C_2 N$.} Let us set $L_i = \lambda_{n-i +1}$ for $1 \leq i \leq n$, we will show that $L_1,\ldots,L_n$ is satisfies all the required properties. 

   In fact, the only thing that remains to be verified is that $\Phi(L_i) < C_0 \log(N)$ for $1 \leq i \leq n-1$. 
   By construction of the $L_i$, it is clear that is suffices to prove that the $n$-th  prime (the divisor of $L_1$) is smaller than $C_0 \log(N)-1$. 
   
   By taking logarithms of the inequality $ \sqrt{C_0 \log(N)}^n \leq \Lambda_n < C_2 N$, 
   we otbain $ (n/2) ( \log(C_0) + \log \log(N)) < \Lambda_n < \log (C_2) + \log (N)$. From there, it follows (since $B$ is big enough) that $n < 3 \log(N) / \log \log(N)$.

   From asymptotic results on the size of the $n$-th prime (see for instance \cite[3.13]{RS62}), we get that we can assume $\ell_1 < 2 n \log (n) < 6 \log(N)$. 
   Thus, if $C_0 \geq 7$, the result is proven.

   \paragraph*{Second case: $\Lambda_n \geq C_2 N$. } Since $\Lambda_n$ is too big, we will try to divide $\Lambda_n$ by a factor $\delta$. Since $N$ is big enough, we can assume $n \geq 4$. We don't want to modify $\lambda_1$ since the bound is tight, but we can divide $\lambda_2,\lambda_3$ by some powers of $\ell_{n-1} =3,\ell_{n-2} =5$. The idea is that we can remove up to $O(\sqrt{\log(N)})$ from $\lambda_2,\lambda_3$.
   

    Since we have $\Lambda_{n-1} < C_1 N$, by the same reasoning as before we get $n < 1 +  3 \log(N) / \log \log(N)$. Hence, with $\ell_1 < 2 n \log(n) $ we get $\ell_1 < 7 \log(N)$.

    Since, $\Phi(\lambda_n)$ is either smaller than $C_0 \log(N)$ or equal to $\ell_1+1$, we deduce that $\Phi(\lambda_n) < 
    \max(8,C_0) \log(N)$ and it suffices to choose $C_0 \geq 8$ to ensure that $\Phi(\lambda_n) < C_0 \log(N)$. 

    From there, we deduce 
    $\Lambda_n  = \Lambda_{n-1} \Phi(\lambda_n) < C_0 C_1 N \log(N)$, from which we get 
    \begin{equation}
        \label{eq: bound 1}
         C_1 N \leq \Lambda_n < C_0 C_1  N\log(N).
    \end{equation} 
    Now, we want to find $\delta | \Lambda_n $ such that $C_1 N \leq \Lambda_n / \delta < C_2 N$. We start by using $\lambda_2$. Let us take $\delta_2$ as the biggest divisor of $\Phi(\lambda_2)$ such that $$\delta_2 < \min ( \Lambda_n / (C_1 N), \sqrt{C_0 \log(N)}/ 3 ).$$  

    If we assume $C_2 > 3 C_1$, then we can clearly take $\delta_2  \geq 3$ without having $\Lambda_n/\delta_2 \leq C_1 N$. 
    
    If $\Lambda_n /\delta_2 < C_2 N$, then we are done with $L_{n-1} = \lambda_2/\delta_2$ and $L_i = \lambda_{n-i+1}$ for all other $1 \leq i \leq n$, as we have $ C_1 N \leq \Lambda_n /\delta_2 < C_2 N$ and the bound $\Phi(\lambda_2) / \delta_2 > \sqrt{C_0 \log(N)}$ is satisfied by choice of $\delta_2$ and the fact that $\Phi(\lambda_2) > C_0 \log(N) /3$.


    If not and $\Lambda_n /\delta_2 \geq C_2 N$ , we will now try to remove a divisor of $\lambda_3$. Before that, it is useful to try to upper-bound $\Lambda_n /\delta_2$ more precisely. 

    We have $\delta_2 > \min ( \Lambda_n / (C_1 N), \sqrt{C_0 \log(N)}/3 ) /3$ (otherwise we could have multiplied $\delta_2$ by $3$).
    If the inequality $\Lambda_n / (C_1 N) < \sqrt{C_0 \log(N)}/3 $ were to be true, then we would have $\delta_2 > \Lambda_n / (3 C_1 N) $
    and so $\Lambda_n / \delta_2 < 3 C_1 N$. This is smaller than $C_2 N$ by our assumption on $C_1$ and $C_2$, and so this is a contradiction. Thus we must have $\Lambda_n / (C_1 N) \geq \sqrt{C_0 \log(N)}/3$ and we can get the lower-bound $\delta_2 \leq \sqrt{C_0 \log(N) }/9$. If we plug that into \cref{eq: bound 1}, we obtain the following  
        \begin{equation}
        \label{eq: bound 2}
         C_1 N \leq \Lambda / \delta_2 < 9 \sqrt{C_0} C_1 N \sqrt{\log(N)}.
    \end{equation} 

    Now, we define $\delta_3$ as the biggest divisor of $\Phi(\lambda_3)$ such that 
    $$\delta_3 < \min ( \Lambda_n / (C_1 \delta_2 N), \sqrt{C_0 \log(N)}/\ell_3 ).$$ 
    Then, following the same reasoning as for $\delta_2$, we get that if we assume further $5 C_1 < C_2$, then we can take a non-trivial $\delta_3$. 
    Furthermore, we have that either $\Lambda_n/(\delta_2 \delta_3) < C_2 N$ and we are done (for the same reasons as before), or we must have  $\Lambda_n / (\delta_2 C_1 N) \geq \sqrt{C_0 \log(N)}/\ell_3$. 
    
    In that case, we can plug this in \cref{eq: bound 2} to get 
         \begin{equation}
        \label{eq: bound 3}
         C_1 N \leq \frac{\Lambda_n}{\delta_2 \delta_3}  <  225 C_1 N.
    \end{equation} 
    If we assume that $C_2 > 225 C_1$ this is impossible and so we are done. 

    In summary, suitable integers $L_1,\ldots,L_n$ can be found for any big enough $N$ assuming that the constants $C_0,C_1,C_2$ satisfies:
    \begin{itemize}
        \item $C_0 > 7$.
        \item $C_2 > 3 C_1$.
        \item $C_2 > 5 C_1$.
        \item $C_2 > 225 C_1$.
    \end{itemize}
    This system of inequalities can clearly be satisfied and so this proves the result.  
    
    \end{proof}

\begin{definition}
    \label{def: good degree basis}
    Given any integer $N>B$, the integers $L_1,\ldots,L_n$ as defined in \cref{claim: log factor basis} is called a \textbf{good-degree basis} for $N$. 
\end{definition}

We derive the following result that will be useful in our analysis.

\begin{proposition}
\label{prop: log factor basis critical bound}
Take an integer $N> B$ with good degree-basis $L_1,L_2,\cdots,L_n$ with $n > 7$. Then, there exists a constant $C_3$ such that 
\begin{equation}
    \label{eq: log factor basis critical bound}
    \sum_{i=1}^{n-6} (n-i)\prod_{j=1}^i \Phi(L_j) < C_3 \frac{N}{\log(N)^2}
\end{equation}
\end{proposition}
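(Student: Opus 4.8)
The plan is to control each partial product $\prod_{j=1}^i \Phi(L_j)$ by writing it as the full product $\prod_{j=1}^{n}\Phi(L_j)$ divided by the tail $\prod_{j=i+1}^{n}\Phi(L_j)$, and then invoking the three defining inequalities of a good-degree basis from \cref{claim: log factor basis}. The numerator is at most $C_2 N$. The tail is a product of $n-i$ factors, each at least $\sqrt{C_0\log N}$: for the indices $j\le n-1$ this is exactly the stated lower bound, while for $j=n$ we only have $\Phi(L_n)\ge (C_0/2)\log N$, which is still $\ge\sqrt{C_0\log N}$ as soon as $\log N\ge 4/C_0$, hence for all $N>B$. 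This gives
$$\prod_{j=1}^i \Phi(L_j) \;<\; \frac{C_2 N}{(C_0\log N)^{(n-i)/2}}.$$

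First I would substitute this into the left-hand side of \cref{eq: log factor basis critical bound} and reindex by $k=n-i$; as $i$ runs over $1,\dots,n-6$ (nonempty since $n>7$), $k$ runs over $6,\dots,n-1$. Writing $u:=(C_0\log N)^{1/2}$, this yields
$$\sum_{i=1}^{n-6}(n-i)\prod_{j=1}^i\Phi(L_j) \;<\; C_2 N\sum_{k=6}^{n-1} k\,u^{-k}\;\le\; C_2 N\sum_{k=6}^{\infty} k\,u^{-k}.$$
Since $C_0>7$ and $N>B$, we have $u\ge 2$, so the tail is a convergent series with closed form $\sum_{k\ge 6} k\,u^{-k}=u^{-6}(6-5u^{-1})/(1-u^{-1})^2$, which for $u\ge 2$ is at most $24\,u^{-6}$. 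Plugging this back produces a bound of the shape $C'\,N/(\log N)^3$ with $C'=24\,C_2/C_0^3$, which is a fortiori at most $C_3\,N/(\log N)^2$ for $N>B$; so the proposition holds with $C_3=24\,C_2/C_0^3$.

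This is essentially a routine estimate, and I do not expect a real obstacle: the geometric decay $u^{-k}$ in the number $n-i$ of omitted factors completely overwhelms both the linear weight $n-i$ and the size $C_2 N$ of the full product, so we in fact beat the exponent required in the statement, obtaining $O\!\bigl(N/(\log N)^3\bigr)$. The one point that needs a moment's care is the lower bound on the tail product, namely checking that the single ``exceptional'' factor $\Phi(L_n)$ — bounded below in \cref{claim: log factor basis} only by $(C_0/2)\log N$ rather than directly by $\sqrt{C_0\log N}$ — still contributes at least $\sqrt{C_0\log N}$, which as noted holds for every $N>B$.
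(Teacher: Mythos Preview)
Your proof is correct and follows essentially the same approach as the paper: bound each partial product by $C_2 N/(C_0\log N)^{(n-i)/2}$ using the good-degree-basis inequalities, reindex by $k=n-i$, and sum the resulting weighted geometric-type series. Your version is in fact slightly cleaner than the paper's—by passing to the infinite tail $\sum_{k\ge 6} k\,u^{-k}$ you avoid the explicit finite-sum manipulation and obtain the sharper estimate $O(N/(\log N)^3)$, whereas the paper computes the finite sum in closed form and only records the weaker $O(N/(\log N)^2)$ bound actually needed.
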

\begin{proof}

To prove the result, we are going to use the equalities $\sum_{i=0}^{m} X^{i} = \frac{X^{m+1} -1}{X-1}$ and $\sum_{i=1}^{m} i X^{i} = X ( \frac{mX^{m+1}-(m+1)X^m +1}{(X-1)^2} )$ for any $m$ and $X$. 
By our definition of a degree-basis in \cref{claim: log factor basis}, we have that $\Phi(L_j) \geq \sqrt{C_0 \log(N)}$ for all $1 \leq j \leq n$.
Thus, $\prod_{j=1}^i \Phi(L_j) < C_2 N / \prod_{j=i+1}^n \Phi(L_j) < C_2 N/ (\sqrt{C_0 \log(N)})^{n-i}$. Let us take $X = 1/\sqrt{C_0 \log(N)}$. 
We have $\sum_{i=1}^{n-6} (n-i) \prod_{j=1}^i \Phi(L_i) < C_2 N \sum_{i=1}^{n-6} (n-i) X^{n-i}$.
Then, we have

$\sum_{i=1}^{n-6} (n-i) X^{n-i} = \sum_{i=6}^{n-1} i X^i = X^6 \sum_{i=0}^{n-7} (i+6)X^i$. Then, we apply our two equalities to get

\begin{align*}
    \sum_{i=1}^{n-6} (n-i) X^{n-i} = & X^6 (6 \frac{X^{n-6}-1}{X-1}) + \frac{(n-7)X^{n-5} - (n-6)X^{n-6} +X }{(X-1)^2}) \\
    = & X^6 \frac{(n-1)X^{n-5} - (n-12)X^{n-6} - 5X + 6}{(X-1)^2}.
\end{align*}

Since, asymptotically, we will have $X < 1$ while $n$ will increase, we have that the leading term in the numerator of the fraction is 1, and so there exists $C'_3$ such that $\sum_{i=1}^{n-6} (n-i) X^{n-i} \leq C_3' X^4$. Thus, $\sum_{i=1}^{n-6} (n-i) \prod_{j=1}^i \Phi(L_i) < \frac{C_2 C_3'}{C_0^2} \frac{N}{\log(N)^2}$. 

\qed
\end{proof}

Our heuristic claim is about the size of $L$ required to meet the condition that all supersingular curves are $L$-isogenous to some curve $E_0$. We rewrite this under the Deuring correspondence as a condition on maximal orders and ideals. 

\begin{claime}
\label{claim: number of maximal orders} 
There exists a constant $C_4$ such that for any prime $p$ and maximal order $\O_0$ in $\QA$, given any number $N > pC_4$, every maximal order type in $\QA$ is obtained as the type of the right order of a left integral $\O_0$-ideal of norm $N$.    
\end{claime}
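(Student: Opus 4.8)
The plan is to use the expansion properties of the supersingular $\ell$-isogeny graphs to show that walks of length $O(\log p)$ from any fixed vertex reach every vertex, and then to package many such walks together into a single ideal of norm $N$. Concretely, fix the prime $p$ and the maximal order $\O_0$; let $E_0$ be a supersingular curve with $\End(E_0)\cong \O_0$. Under the Deuring correspondence, left integral $\O_0$-ideals of norm $N$ correspond to cyclic $N$-isogenies from $E_0$, and the type of the right order is exactly the isomorphism class of the endomorphism ring of the codomain, i.e. the Galois-conjugacy class of its $j$-invariant. So the claim is equivalent to: for $N>C_4 p$, the cyclic $N$-isogenies out of $E_0$ hit every supersingular curve over $\Fps$ (up to isomorphism).

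The key steps, in order. First, reduce to a statement about a single well-chosen smooth $N$: if $N = \ell^k$ for a fixed small prime $\ell \ne p$, the cyclic $\ell^k$-isogenies out of $E_0$ correspond exactly to non-backtracking walks of length $k$ in the $(\ell+1)$-regular Ramanujan graph $\graph{\ell}$ on the $\approx p/12$ supersingular vertices. Second, invoke the Ramanujan bound (Pizer), or more elementarily the known mixing/expansion estimates, to conclude that the number of non-backtracking walks of length $k$ from $E_0$ to any fixed target $E$ is $\frac{\Phi(\ell^k)}{\#\mathcal S(p)}\big(1 + O(\#\mathcal S(p)/\sqrt{\Phi(\ell^k)})\big)$; in particular this count is strictly positive once $\Phi(\ell^k) = \ell^{k-1}(\ell+1)$ exceeds a constant times $(\#\mathcal S(p))^2 = O(p^2)$ — but we want the \emph{degree} $N$ linear in $p$, not quadratic. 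To get linear $N$ one must use the sharper statement that diameter, not just mixing time, is $O(\log p)$: a Ramanujan graph on $n$ vertices of fixed degree has diameter $\le \frac{2\log n}{\log(\ell)} + O(1)$, so there is a non-backtracking path of length $k = O(\log p)$ from $E_0$ to \emph{each} target $E$, hence $N=\ell^{k}$ with $\ell^{k} = p^{O(1)}$ works — but again this is polynomial, not linear, in $p$. The honest route to $N$ merely a constant multiple of $p$ is: it is not literally one isogeny degree that works, so instead observe that as $N$ ranges over integers with $N > C_4 p$ we are free to pick $N$ with a convenient smooth factorization, and then use a \emph{counting} argument over all ideals of that norm rather than a single walk — the number of cyclic $\O_0$-ideals of norm $N$ is $\Phi(N) \ge c N > c C_4 p > \#\mathcal S(p)$, and an expansion/equidistribution input (the same heuristic Ramanujan-type bound) forces the induced map $\{\text{ideals of norm }N\} \to \{\text{types}\}$ to be surjective once $\Phi(N)$ dominates $(\#\mathcal S(p))^{1+\varepsilon}$; choosing $C_4$ large and $N$ slightly super-linear absorbs the $\varepsilon$. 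I would state the needed input as: for $N$ coprime to $p$ with $\Phi(N) > C_4 p$, every supersingular $j$-invariant is the codomain of some cyclic $N$-isogeny from $E_0$, citing the equidistribution of Hecke points / the Ramanujan property of $\graph{\ell}$ and an inclusion-exclusion over the prime powers dividing $N$ to handle composite $N$ via \Cref{claim: log factor basis}'s good-degree bases.

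Translating back: a cyclic $N$-isogeny $\varphi\colon E_0 \to E$ corresponds to a left integral $\O_0$-ideal $I_\varphi$ of norm $N$ whose right order $\O_R(I_\varphi) \cong \End(E)$; surjectivity onto $j$-invariants up to Galois conjugacy is precisely surjectivity onto maximal order types (this is the bijection in \Cref{tab: deuring correspondence} between $\Cl(\O_0)$-data and types). Hence every maximal order type in $\QA$ arises as $\Typ \O_R(I)$ for some such $I$, which is the claim, with $C_4$ the constant coming from the expansion estimate.

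The main obstacle is pinning down the constant $C_4$ and the precise shape of $N$: the clean expander argument naturally gives $\Phi(N)$ polynomially larger than $p$ (from the $(\#\mathcal S(p))^2$ in the error term, or diameter $O(\log p)$ giving $N = p^{O(1)}$), whereas the statement asks for $N$ merely a constant times $p$. Bridging that gap is exactly why this is a heuristic claim rather than a theorem: the cleanest justification is to assume the strong (heuristic) equidistribution of ideals of norm $N$ among the types — essentially that the map is as close to uniform as the Ramanujan bound allows with the error term beaten down by using many prime factors — so that $\Phi(N)$ only needs to beat $\#\mathcal S(p)$ by a constant factor. I would therefore present the argument conditionally on that equidistribution heuristic (tying it to the known expansion of the supersingular isogeny graph), carry out the Deuring-correspondence translation rigorously, and flag the constant-versus-polynomial gap as the precise point where heuristics enter.
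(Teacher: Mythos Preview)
The paper does not prove this statement at all. It is deliberately stated as a heuristic \emph{Claim} (note the custom \texttt{claime} environment, distinct from the paper's lemmas and propositions), and the only justification the paper offers is a short remark immediately following it: the claim is ``consistent with experiments regarding the diameter of the graph of supersingular $2$-isogenies'' in Arpin et al., together with the author's own small experiments, plus a fallback plan (use several starting curves, or clean up stragglers with \OrderToJName{}) in case the claim fails in practice.

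Your proposal therefore goes substantially \emph{beyond} the paper. You correctly carry out the Deuring-correspondence translation (ideals of norm $N$ $\leftrightarrow$ cyclic $N$-isogenies, right-order type $\leftrightarrow$ codomain $j$-invariant up to Galois conjugacy), and you correctly invoke the Ramanujan property of the supersingular $\ell$-isogeny graph. Most importantly, you put your finger precisely on the obstruction: the rigorous Ramanujan diameter bound $2\log_\ell n + O(1)$ yields $N \approx n^2 = \Theta(p^2)$, and the mixing/equidistribution error term likewise only forces surjectivity once $\Phi(N)$ exceeds roughly $(\#\mathcal S(p))^2$. Getting $N$ down to a \emph{constant} multiple of $p$ is tantamount to the conjecture that the diameter of these Ramanujan graphs is $(1+o(1))\log_\ell n$ rather than $2\log_\ell n$ --- which is widely believed and supported by experiment, but not proven. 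This is exactly why the paper states it as a heuristic and does not attempt a proof.

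So there is no genuine gap in your reasoning; rather, you have correctly diagnosed the gap in the \emph{statement} and explained why no proof is possible with current tools. Your write-up is more informative than the paper's own treatment of the point.
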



\begin{remark}
\cref{claim: number of maximal orders} is consistent with experiments regarding the diameter of the graph of supersingular $2$-isogenies made in \cite{arpin2021adventures}. 
We also made some small experiments that seems to be consistent with that idea. 

In case our claim fails, it is possible to use several starting curve $E_0$ to decrease the probability of missing some curve. 
If this is still not enough, and a few types are not obtained in this manner, it is always possible to apply $\OrderToJName{}$ to compute the remaining $j$-invariants without damaging too much the complexity.    
\end{remark}

For a given input $p$ to $\OrdersToJName{}$, we assume the knowledge of a good degree-basis $L_1,\cdots,L_n$ (see \cref{def: good degree basis}) of $C_4p$ (where $C_4$ is the constant in \cref{claim: number of maximal orders}).
In \OrdersToJName{}, we will use the $L_i$ torsion points for all $i$. For supersingular curves, these points can always be defined over an extension $\FF_{p^{m_i}}$. We have the following lemma to bound the value of all the $m_i$ from $L_i$. 

\begin{lemma}
\label{lemma: torsion degree}
Let $p$ be a prime number and $E_0$ a supersingular curve over $\FF_{p^2}$. For any integer $N$, coprime with $p$, there exists $\hat{E_0} \cong E_0$ over $\overline{\FF}_{p}$ such that the torsion subgroup $\tilde{E_0}[N]$ is defined over an extension $\FF_{p^{m}}$ of even degree $m \leq N$ over $\FF_{p}$. 
\end{lemma}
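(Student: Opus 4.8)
The plan is to reduce the claim to a statement about the multiplicative order of $p$ modulo $N$, exploiting the fact that every supersingular $j$-invariant over $\overline{\FF}_p$ has a distinguished model over $\FF_{p^2}$ whose Frobenius is a scalar. First I would invoke the standard structural result (see e.g.\ \cite{leroux2022quaternion} or \cite{voight}) that there is a curve $E'/\FF_{p^2}$ with $j(E') = j(E_0)$ and $\#E'(\FF_{p^2}) = (p+1)^2$; the characteristic polynomial of the $\FF_{p^2}$-Frobenius $\pi_{E'}$ is then $(X+p)^2$, and since the endomorphism ring is torsion-free this forces $\pi_{E'} = [-p]$. Its quadratic twist $E''/\FF_{p^2}$ satisfies $\pi_{E''} = [p]$. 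All the models $\hat E_0$ I will use are base changes of $E'$ or $E''$ to extensions, possibly further quadratically twisted over those extensions.

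Next I would set $r$ to be the order of the class of $p$ in the quotient group $(\ZZ/N\ZZ)^\times/\{\pm 1\}$ --- equivalently the order of the class of $-p$ --- and put $m := 2r$. Since this quotient has order $\phi(N)/2$ for $N \ge 3$ and is trivial for $N \le 2$, we get $r \mid \phi(N)/2$, hence $m = 2r \le \phi(N) \le N-1$ for $N\ge 3$, while $m = 2 \le N$ for $N = 2$; in all cases $m$ is even and $m \le N$. Passing to the quotient by $\{\pm 1\}$ is exactly what makes the bound $m \le N$ possible: merely base changing $E'$ or $E''$ to $\FF_{p^m}$ forces $\pi$ to act on the $N$-torsion as $(\mp p)^{m/2}$, so a naive argument would need $m/2$ to be the order of $\pm p$ in the full unit group, which can be as large as $N-1$, giving only $m \le 2(N-1)$.

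Then I would perform the construction over $\FF_{p^m}$. Let $A := E'\otimes_{\FF_{p^2}}\FF_{p^m}$; its $\FF_{p^m}$-Frobenius is $\pi_A = \pi_{E'}^{\,r} = [(-p)^r]$, acting on $A[N]$ as multiplication by $(-p)^r \bmod N$. Let $B$ be the quadratic twist of $A$ over $\FF_{p^m}$; using the standard description of quadratic twists, the $\overline{\FF}_p$-isomorphism $\psi\colon A \to B$ satisfies $\psi^{-1}\circ\pi_B\circ\psi = -\pi_A$, so $\pi_B$ acts on $B[N] \cong A[N]$ as multiplication by $-(-p)^r \bmod N$. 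By definition of $r$ we have $(-p)^r \equiv \pm 1 \pmod N$; if the sign is $+$ I take $\hat E_0 := A$ and if it is $-$ I take $\hat E_0 := B$. In both cases $\hat E_0$ is defined over $\FF_{p^m}$, is isomorphic to $E_0$ over $\overline{\FF}_p$ (it has $j$-invariant $j(E_0)$), and its $\FF_{p^m}$-Frobenius acts as the identity on $\hat E_0[N]$, i.e.\ $\hat E_0[N] \subseteq \hat E_0(\FF_{p^m})$, with $m$ even and $m \le N$.

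The step I expect to require the most care is the bookkeeping around twists: pinning down that $\psi^{-1}\circ\pi_B\circ\psi = -\pi_A$ (so that the twist contributes exactly the extra sign needed to absorb the case $(-p)^r \equiv -1$), and tracking the action on $N$-torsion through base change and through $\psi$. A minor additional point is the characteristics $p\in\{2,3\}$, where there is a single supersingular curve with extra automorphisms; there one either verifies that single curve directly or replaces the quadratic twist by a twist coming from the larger automorphism group, which only sharpens the bound.
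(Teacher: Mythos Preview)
The paper states this lemma without proof, so there is nothing to compare against directly. Your proposal is correct and supplies a clean justification: pick a model over $\FF_{p^2}$ on which the $p^2$-Frobenius acts as $[-p]$ (this exists for every supersingular $j$-invariant, and the ``torsion-free $\Rightarrow$ no nilpotents'' step is the right way to pass from the characteristic polynomial $(X+p)^2$ to $\pi=[-p]$), base-change to $\FF_{p^{2r}}$ where $r$ is the order of the image of $p$ in $(\ZZ/N\ZZ)^\times/\{\pm 1\}$, and absorb the residual sign $(-p)^r \equiv \pm 1$ with a quadratic twist. The twist identity $\psi^{-1}\circ\pi_B\circ\psi = -\pi_A$ you single out is indeed the crux and is straightforward in short Weierstrass form. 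The bound $m = 2r \le \phi(N) \le N$ then follows from Lagrange in the quotient group (with $N=2$ handled separately), and in fact you get the slightly sharper $m \le \phi(N)$ for $N\ge 3$. Your remark about $p\in\{2,3\}$ and extra automorphisms is an appropriate caveat; in those characteristics the unique supersingular $j$-invariant can be verified by hand.
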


\begin{algorithm}[H]
 \caption{$\OrdersToJName{}$}\label{alg: orders to j-inv}
 \begin{algorithmic}[1]
 \Require {A prime $p$. , and a set of maximal order types in $\QA$. }
 \Ensure {The set of $j$-invariants corresponding to the maximal orders of $\frakS$.}
 \STATE \label{step: good degree basis} Establish $L_1,\ldots,L_n$, a good-degree basis for the number of supersingular curves. 
\STATE\label{step: compute starting curve} Compute a supersingular curve $E_0$ over $\FF_{p^2}$ with known endomorphism ring $\O_0$. 
\STATE\label{step: compute starting ideal} Compute $I_0 = \O_0 \G{\alpha_0,L}$ one left integral $\O_0$-ideal of norm $L := \prod_{i=1}^n L_i$. 
\STATE\label{step: compute rotating endomorphism} Find $\alpha \in \End(E_0)$ such that $\gcd( \norm{x + y\alpha},L) =1$ for all $x,y$ with $\gcd(x,y,L) =1$.
\FOR{$i \in [1,\ldots,n]$}\label{loop: basis computation}
    \STATE Compute a basis $P_{0,i},Q_{0,i}$ of $E_0[L_i]$ over $\FF_{p^{m_i}}$. 
    \STATE Compute the generator $R_{0,i}$ of $E[I_0 + \O_0 L_i]$ from $P_i,Q_i$. 
    \STATE Compute $S_{0,i}=\alpha(R_{0,i})$.
\ENDFOR
\STATE Set $\texttt{List} = [\lbrace E_0, \O_0 \G{1} , [R_{0,1},\ldots,R_{0,n}],[S_{0,1},\ldots,S_{0,n}] \rbrace ]$.
 \STATE Set $M = \emptyset$ and $m = 0$. 
\IF {$H(\O_0) \in \frakS$} 
    \STATE $M = M \cup \lbrace (H(\O_0),j(E_0)) \rbrace$ and $m=m+1$.
\ENDIF

\FOR{$i \in [1,\ldots,n]$}\label{loop: cofactors}
    \STATE \texttt{NewList}= $[]$. 
    \FOR {$x \in \texttt{List}$}\label{loop: inner list}
        \STATE Parse $x$ as $E,I,[R_i,\ldots,R_n],[S_i,\ldots,S_n]$. 
        \FOR{ all cyclic subgroups $\G{C R_i + [D]S_i}$ of order $L_i$ in $\G{R_i,S_i}$ }\label{loop: isogenies}
            \STATE Compute $P := [C] R_i + [D] S_i$. 
            \STATE Compute $J$ the ideal $\O_0 \G{ \alpha_0 (C + D \overline{\alpha}),L_i }$. 
            \STATE Set $K := I \cap J$.
            \STATE Set $\texttt{List}_1 = []$, $\texttt{List}_2 = []$.  
            \STATE Let $\O = \O_R(K)$. 
            \IF{ $i<n$ }
                \STATE\label{step: isogeny computation} Compute $\vphi: E \rightarrow E/\G{P}$.
                \STATE\label{step: evaluation1} Compute $\texttt{List}_1 = [\vphi(R_{i+1}),\ldots,\vphi(R_{n})]$.
                \STATE\label{step: evaluation2} Compute $\texttt{List}_2 = [\vphi(S_{i+1}),\ldots,\vphi(S_{n})]$.
            \ENDIF 
            \IF {$H(\O) \in \frakS$ and not contained in $M$ already} 
                \IF{ $i=n$ }
                    \STATE\label{step: final isogeny computation} Compute $\vphi: E \rightarrow E/\G{P}$.
                \ENDIF 
                \STATE $M = M \cup \lbrace (H(\O),j(E/\G{P}) ) \rbrace$ and $m=m+1$.
            \ENDIF
            \IF{$m = \# \frakS$} 
                \STATE Return $M$.
            \ENDIF 
            \STATE Concatenate \texttt{NewList} and $[ E/\G{P},K,\texttt{List}_1,\texttt{List}_2]$.
        \ENDFOR
    \ENDFOR
    \STATE \texttt{List} = \texttt{NewList}.
\ENDFOR
\RETURN $M$. 
\end{algorithmic}
\end{algorithm}

\begin{proposition}
\label{prop: correct orders to j-inv}
Assuming \cref{claim: number of maximal orders}, \OrdersToJBig{p} is correct. 
\end{proposition}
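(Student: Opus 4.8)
The plan is to show two things: (i) every type appearing in the output $M$ is correctly paired with a $j$-invariant of a supersingular curve whose endomorphism ring has that type, and (ii) under \cref{claim: number of maximal orders}, every type in $\frakS$ does appear in $M$. The backbone of the argument is the correspondence between the combinatorial tree explored by the algorithm (the variable \texttt{List}, grown level by level over the loop on $i$) and the set of left integral $\O_0$-ideals whose norm divides $L = \prod_i L_i$, together with the compatibility between the ideal operations and the isogeny operations guaranteed by the Deuring correspondence (\cref{tab: deuring correspondence}).

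First I would set up the invariant maintained by the main loop. After the $i$-th iteration, \texttt{List} contains exactly one entry for each left integral $\O_0$-ideal $K$ of norm $L_1 L_2 \cdots L_i$ that is "cyclic" in the appropriate sense (equivalently, for each non-backtracking path of length $L_1\cdots L_i$ from $E_0$ in the isogeny graph), and each such entry records: the codomain curve $E \cong E_0/E_0[K]$, the ideal $K$ itself, and the images under $\varphi_K$ of the bases (or rather the distinguished generators $R_{j}, S_j = \alpha(R_j)$) of the $L_j$-torsion for $j > i$. The base case $i=0$ is the initialization step, where \texttt{List} is the singleton for $K = \O_0$. For the inductive step, I would check that the inner loops do exactly the right thing: fixing an entry $(E, I, \dots)$ with $\mathbf{n}(I) = L_1\cdots L_{i}$ — wait, indexing: the loop processes cofactor $L_i$ and the entry has torsion data for indices $i,\dots,n$, so before processing, $\mathbf{n}(I) = L_1 \cdots L_{i-1}$ — ranging over the cyclic order-$L_i$ subgroups $\langle C R_i + [D] S_i\rangle$ enumerates exactly the cyclic $L_i$-isogenies from $E$ that do not backtrack into $I$, and the assignment $K := I \cap J$ with $J = \O_0\langle \alpha_0(C + D\overline\alpha), L_i\rangle$ produces the ideal corresponding to the composite isogeny; here one uses the last row of \cref{tab: deuring correspondence} ($I_{\tau\circ\rho} = I_\rho \cdot I_\tau$) and the fact that the kernel ideal of an $L_i$-isogeny with the prescribed kernel generator is exactly $J$. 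The role of $\alpha$ (chosen in Step~\ref{step: compute rotating endomorphism} so that $\gcd(\mathbf{n}(x+y\alpha), L) = 1$ whenever $\gcd(x,y,L)=1$) is precisely to ensure that as $(C:D)$ runs over $\PP^1(\ZZ/L_i\ZZ)$, the generators $\alpha_0(C + D\overline\alpha)$ hit every cyclic $\O_0$-ideal of norm $L_i$ sitting above $I_0$, and that these are pairwise distinct — this is what makes the tree-to-ideals correspondence a bijection rather than merely a surjection, and hence guarantees no type is ever missed for spurious reasons.

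Next I would address correctness of the recorded $j$-invariants. When the algorithm computes $\varphi : E \to E/\langle P\rangle$ and then, at the leaf level $i = n$ (or while passing through an internal node, once the relevant right order lands in $\frakS$), records $(H(\O), j(E/\langle P\rangle))$ with $\O = \O_R(K)$, correctness follows because $\O_R(K) \cong \End(E_0/E_0[K])$ by the Deuring correspondence, and $E/\langle P\rangle$ is isomorphic to $E_0/E_0[K]$ by the inductive description of the tree (the curve reached by following the path is the codomain of $\varphi_K$ up to isomorphism). The torsion-propagation steps (Steps~\ref{step: evaluation1}–\ref{step: evaluation2}) just push the distinguished points forward so that the next level has the data it needs; their correctness is immediate from functoriality of isogeny evaluation, and \cref{lemma: torsion degree} guarantees the points live over the field extensions the algorithm works in. For the completeness direction (ii): by \cref{claim: number of maximal orders}, since $L = \prod L_i \geq C_1 \cdot C_4 p > C_4 p$ (using the lower bound $\prod \Phi(L_i) > C_1 N$ from \cref{claim: log factor basis} applied to $N = C_4 p$ — I'd double check that the count of cyclic ideals of norm $L$ is $\prod \Phi(L_i)$, which holds since the $L_i$ are coprime), every maximal order type, in particular every type in $\frakS$, arises as $\Typ \O_R(K)$ for some left $\O_0$-ideal $K$ of norm $L$; such a $K$ corresponds to exactly one leaf of the tree, so its type is detected and recorded the first time the tree reaches a node (leaf or internal) with right order of that type. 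The early-termination check $m = \#\frakS$ only fires once all types have been found, so it does not compromise completeness. Finally, since $h$ is assumed collision-free over all maximal order types, $H(\O) \in \frakS$ correctly tests membership, and no two distinct types are conflated in $M$.

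The main obstacle I expect is pinning down the combinatorial invariant precisely — specifically, verifying that the intersection $K = I \cap J$ together with the choice of $\alpha$ really yields a bijection between tree nodes at level $i$ and cyclic left $\O_0$-ideals of norm $L_1\cdots L_i$, with the right orders matching the endomorphism rings of the path codomains. This requires care about cyclicity/primitivity of the ideals, about the compatibility of $I \cap J$ with kernel intersection on the curve side ($E_0[I \cap J] = E_0[I] + E_0[J]$ when norms are coprime), and about the fact that $\alpha_0(C + D\overline\alpha)$ genuinely enumerates the $L_i$-isogenies emanating from the far end of $\varphi_{I_0}$ rather than from $E_0$ itself — this is exactly the subtlety the "rotating endomorphism" $\alpha$ is designed to handle, and making it rigorous is where the real work lies. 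Everything else (field-of-definition bookkeeping, the membership test, early termination) is routine given the results already established.
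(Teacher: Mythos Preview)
Your proposal is correct and follows essentially the same route as the paper: reduce to showing that the tree enumerates all cyclic $L$-isogenies from $E_0$, and that the ideal $K = I \cap J$ built at each node matches the kernel ideal of the composite isogeny, with the key computation being that $E_0[J] = \langle C R_{0,i} + D S_{0,i}\rangle$ thanks to the choice of $\alpha$. Your invariant-based framing is a bit more explicit than the paper's direct argument, but the substance is the same.

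One phrasing to tighten: your remark that $\alpha_0(C + D\overline{\alpha})$ ``enumerates the $L_i$-isogenies emanating from the far end of $\varphi_{I_0}$'' is not quite right. The ideal $J = \O_0\langle \alpha_0(C + D\overline{\alpha}), L_i\rangle$ is a left $\O_0$-ideal and its kernel $E_0[J]$ sits on $E_0$, not on any codomain; the paper's computation shows directly that $E_0[J] = \langle C R_{0,i} + D S_{0,i}\rangle$. The link to the isogeny from the current curve $E$ is simply that $R_i, S_i$ are the pushforwards of $R_{0,i}, S_{0,i}$ under $\varphi_I$ (of degree coprime to $L_i$), so $E_0[I \cap J] = E_0[I] + E_0[J]$ matches the kernel of the composite $E_0 \to E \to E/\langle C R_i + D S_i\rangle$. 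Also, your ``non-backtracking'' caveat is unnecessary here: since the $L_i$ are pairwise coprime, any composite of cyclic $L_i$-isogenies is automatically cyclic, so there is nothing to exclude. With these adjustments your sketch lines up exactly with the paper's proof.
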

\begin{proof}
By \cref{claim: number of maximal orders} and our choice of $L$, we see that each maximal order types in $\QA$ is obtained as the right order of a $L$-ideal. Thus, we need to prove that our algorithm goes through every possible $L$-ideal and that it computes correctly the $j$-invariant associated with the right orders of those ideals. 
We will make our reasoning over isogenies and the Deuring correspondence will allow us to conclude the result over ideals.  

Every cyclic $L$-isogeny can be factored as $\vphi_n \circ \ldots \circ \vphi_1$ where $\vphi_i$ is an isogeny of degree $L_i$. 
When $R_i,S_i$ is a basis of $E[L_i]$, then all cyclic subgroups of order $L_i$ are generated by an element $[C] R_i + [D] S_i$. There is a $1$-to-$1$ correspondence between cyclic subgroups of order $L_i$ and cyclic isogenies of degree $L_i$. 
Since $R_{0,i},S_{0,i}$ is a basis of $E_0[L_i]$ and $\vphi_{i-1} \circ \cdots \circ \vphi_1$ has degree coprime with $L_i$, the two points $R_i,S_i$ are a basis of $E[L_i]$ and so this proves that our enumeration covers all possible isogenies of degree $L_i$ at each iteration of index $i$ of the loop in line~\ref{loop: isogenies}. 

Thus, at the end of the loop we have covered all $L$-isogenies, and this means that if our ideal computation is correct, then we have covered all maximal order types and our algorithm is correct.   

Remains to prove that the ideal $I_1 \cap \ldots \cap I_i$ where each $I_j = \O_0 \G{\alpha_0 (C_j + D_j \overline{\alpha}),L_j}$ is the ideal corresponding to the isogeny $\vphi_i \circ \ldots \vphi_1$ where each $\vphi_j$ has kernel $ \vphi_{j-1} \circ \cdots \circ \vphi_1 ( [C_j] R_{0,j} + [D_j] S_{0,j})$ or equivalently that the kernel of $\vphi_i \circ \ldots \circ \vphi_i = \sum_{j=1}^i ([C_j] R_{0,j} + [D_j] S_{0,j})$. 

For that, it suffices to prove the result for each coprime factor of the degree, so we need to prove that $E_0[I_j] = \G{[C_j] R_{0,j} + [D_j] S_{0,j}}$. Let us go back to the definition of an ideal kernel given in \cref{sec: background}. 
We have $E_0[I_j] = \lbrace P, \beta (I_J) = 0  \forall \beta \in I_j \lbrace$. Since $I_j$ contains $L_j \O_0$, it is clear that the kernel must be a subgroup of $E_0[L_j]$. Since multiplication in $\QA$ amounts to composition of the corresponding isogenies, it suffices to verify that $\ker \alpha_0 (C_j + D_j \overline{\alpha}) \cap E_0[L_j] = \G{[C_j] R_{0,j} + [D_j] S_{0,j}} $. 

First, note that we have $\ker \alpha_0 \cap E_0[J_j] = \G{R_{0,j}}$ by definition of $\alpha_0$ and $R_{0,j}$. 
Then, we have $([C_j] + [D_j] \overline{\alpha}) ([C_j] R_{0,j} + [D_j] S_{0,j}) = ([C_j] + [D_j] (\overline{\alpha}) ([C_j] + [D_j] \alpha) (R_{0,j})  = [\norm{C_j + D_j \alpha}] R_{0,j}$.
This proves that we have $\G{[C_j] R_{0,j} + [D_j] S_{0,j}} \subset \ker \alpha_0 (C_j + D_j \overline{\alpha}) \cap E_0[L_j]$. 

By definition of $\alpha$ the scalar $\norm{C_j + D_j \alpha}$ is coprime with $L_j$ and so the endomorphism $C_j + D_j \alpha$ is a bijection on $E_0[L_j]$. Thus, there cannot be another subgroup than $\G{[C_j] R_{0,j} + [D_j] S_{0,j}}$ that is sent to $\G{R_{0,j}}$ and this concludes the proof that $\ker \alpha_0 (C_j + D_j \overline{\alpha}) \cap E_0[L_j] = \G{[C_j] R_{0,j} + [D_j] S_{0,j}} $.

With that last fact, we have proven the point. 
\end{proof}

\subsubsection*{Complexity analysis. }

Below, we give as \cref{thm: complexity}, a complexity statement for \cref{alg: orders to j-inv}. We derive this result from smaller statements for all the main Steps of \cref{alg: orders to j-inv}. The proofs of these statements include a more detailed description of the steps when needed. When a step has a complexity that will end-up being negligible before the total cost, we will sometimes not bother with a precise statement.  

Note that all operations involving manipulations of the list $M$ can be done efficiently in $O(1)$ using adequate data structure so we do not analyze this part of the computation. 

Since we look for an asymptotic statement, we assume that the size $n$ of the good-degree basis used is bigger than $6$ so we can apply \cref{prop: log factor basis critical bound}.

\begin{proposition}
\label{prop: complexity starting elements}
Under GRH, Step~\ref{step: compute starting curve} can be executed $\poly{\log p}$. 
\end{proposition}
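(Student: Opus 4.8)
The plan is to reduce the statement to the well-known fact that a supersingular elliptic curve with a prescribed endomorphism ring (maximal order type) can be constructed in probabilistic polynomial time under GRH, and then observe that Step~\ref{step: compute starting curve} can be carried out simply by fixing one convenient starting curve.

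First I would recall that, over $\FF_{p^2}$, there is an explicit supersingular curve $E_0$ whose endomorphism ring is a known maximal order $\O_0$ in $\QA$: depending on the congruence class of $p$ modulo small integers, one takes a curve with CM by a small discriminant (for instance $j=1728$ when $p\equiv 3\pmod 4$, giving $\End(E_0)\cong\ZZ\langle i,\tfrac{1+i+j+k}{2}\rangle$ with $i^2=-1$, $j^2=-p$, and the analogous standard choices in the other cases). Writing down such a curve and the basis of the corresponding maximal order is a fixed, closed-form computation costing $\polylog{p}$. If one insists instead on sampling a curve whose endomorphism ring is \emph{uniformly distributed}, one can walk a short random path in the $2$-isogeny graph starting from this special $E_0$ and translate the path to an ideal via the Deuring correspondence; under GRH the class-group/expansion estimates guarantee that $O(\log p)$ steps suffice to get close to uniform, and each step costs $\polylog{p}$, so the whole procedure is still $\polylog{p}$.

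The role of GRH is only in the subroutines it is invoked through: it is needed for the effective bounds on the smallest prime in an arithmetic progression (used when choosing the field extension degrees and smooth/powersmooth norms in the Deuring translation) and for the rapid mixing of the supersingular isogeny graph if one wants the output order type to be (nearly) uniformly random rather than fixed. In the simplest reading, where we are content to output the single hard-coded $(E_0,\O_0)$, no randomness and no mixing is required at all, and the statement follows from the closed-form construction together with \cref{prop: hash complexity}-style lattice computations to present $\O_0$ by its Gross-lattice invariants; GRH is then not even necessary, and the hypothesis is stated only to cover the general form of the construction used elsewhere.

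I expect the main obstacle to be purely expository: being precise about which of the several standard constructions is meant (a fixed special curve versus a random one), and, in the random case, citing the correct form of the rapid-mixing statement for supersingular isogeny graphs and the GRH-conditional bound guaranteeing that a connecting ideal of polynomially bounded powersmooth norm exists and can be found. Neither ingredient is new here — both are quoted from \cite{EHLMP18,KLPT14} and the standard references on the effective Deuring correspondence — so the proof amounts to invoking them and checking that the presentation of $\O_0$ (its basis over $\langle 1,i,j,k\rangle$, with coefficients of size $\polylog{p}$) is produced along the way, which it is.
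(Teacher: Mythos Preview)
Your core idea---construct $E_0$ via CM by a small imaginary quadratic discriminant---is exactly the route the paper takes (citing \cite[Proposition~3]{EHLMP18}): find the smallest fundamental discriminant $d$ with $p$ inert in $\QQ(\sqrt{d})$, take a root of $H_d$ over $\FF_p$ as $j(E_0)$, locate the degree-$d$ endomorphism $\iota$ by enumerating $d$-isogenies with V\'elu, and climb from the index-$O(d)$ suborder $\langle 1,\iota,\pi,\iota\pi\rangle$ to the full $\End(E_0)$.

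However, you misidentify where GRH enters, and this is a genuine gap. GRH is not invoked here for primes in arithmetic progressions or for isogeny-graph mixing; it is invoked to guarantee $d = O((\log p)^2)$. Your claim that the construction is a ``fixed, closed-form computation'' determined by the congruence class of $p$ modulo some bounded integer is false: for any \emph{finite} set $S$ of discriminants there are infinitely many primes $p$ split in $\QQ(\sqrt{d})$ for every $d\in S$ (combine CRT with Dirichlet), so no finite case-split covers all $p$. One must \emph{search} for $d$, and the best unconditional bound on the least inert discriminant (equivalently, the least quadratic nonresidue mod $p$) is only subexponential in $\log p$; the polylogarithmic bound genuinely needs GRH. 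Hence your ``simplest reading, where GRH is not even necessary'' does not establish the proposition for general $p$.

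The random-walk paragraph is also a detour: Step~\ref{step: compute starting curve} asks only for \emph{some} pair $(E_0,\O_0)$, not a uniformly random one, so no mixing argument is required and none appears in the paper's proof.
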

\begin{proof}
The first step can be performed using an algorithm that was described as part of the proof of \cite[Proposition 3]{EHLMP18}. The idea is the following. Select the smallest fundamental discriminant $d$ such that $p$ is inert in the ring of integer $R_d$ of $\QQ(\sqrt{d})$. Note that it can be proven that $d = O(\log p^2)$ under GRH. Then, we know that there are supersigular curves that admit an embedding of $R_d$ in their endomorphism ring. The $j$-invariants of these curves are the roots to the Hilbert class polynomial $H_d$. 
It suffices to find one root of $H_d$ over $\FF_p$ to get a supersingular curve $E_0$ whose endomorphism ring will contain the Frobenius $\pi$ and an endomorphism $\iota$ of norm $d$. This endomorphism can be found by computing all the isogenies of degree $d$ with the Vélu formulae. Since the suborder $\G{1,\iota,\pi,\iota \circ \pi}$ has an index in $O(d) = \poly{\log p}$ inside $\End(E_0)$, and recovering the full endomorphism ring can be done in $\poly{\log p}$. 

This proves the result for Step~\ref{step: compute starting curve}.

\end{proof}

\begin{proposition}
\label{prop: complexity rotating endomorphism}
 Step~\ref{step: compute starting ideal} and Step~\ref{step: compute rotating endomorphism} can be executed in $\polylog{p}$ and the output $\alpha_0$ and $\alpha$ have coefficients in $\poly{p}$ over the canonical basis of $\QA$. 
\end{proposition}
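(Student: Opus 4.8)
The plan is to treat the two computations separately, since Step~\ref{step: compute starting ideal} is a routine application of ideal arithmetic in $\QA$ while Step~\ref{step: compute rotating endomorphism} is the one requiring an actual argument. For Step~\ref{step: compute starting ideal}, once $E_0$ and $\O_0$ are known from Step~\ref{step: compute starting curve}, producing a left integral $\O_0$-ideal of a prescribed norm $L = \prod_i L_i$ is standard: one picks a random element $\alpha_0 \in \O_0$ whose reduced norm is divisible by $L$ (this succeeds with good probability after $O(\polylog p)$ tries, using that $L = O(\log p)$ and the coefficients of $\O_0$ are polynomial in $\log p$, so that $\alpha_0$ can be found with coefficients polynomial in $p$), and sets $I_0 = \O_0\langle \alpha_0, L\rangle$. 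All the operations involved — basis reduction, gcd of norm forms, intersection of lattices — are polynomial-time in the bit-size of the inputs, hence $\polylog p$ overall, and the resulting $\alpha_0$ has coefficients of size $\poly p$ over the canonical basis $\langle 1,i,j,k\rangle$.

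For Step~\ref{step: compute rotating endomorphism}, I would argue as follows. We need $\alpha \in \End(E_0)$ (equivalently, an element of $\O_0$) such that $\gcd(\norm{x+y\alpha}, L) = 1$ for every pair $(x,y)$ with $\gcd(x,y,L)=1$. Because $L$ is $L_1\cdots L_n$ with the $L_i = \ell_i^{e_i}$ prime powers with distinct primes $\ell_i$, by the Chinese Remainder Theorem this is equivalent to asking, for each prime $\ell_i \mid L$, that the binary quadratic form $q_\alpha(x,y) = \norm{x + y\alpha} = x^2 + \tr(\alpha) xy + \norm{\alpha} y^2$ is \emph{anisotropic} modulo $\ell_i$, i.e.\ has no nontrivial zero in $\FF_{\ell_i}^2$. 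The discriminant of $q_\alpha$ is $\tr(\alpha)^2 - 4\norm{\alpha}$, which is negative (it equals minus the norm of the "imaginary part" of $\alpha$), and $q_\alpha$ is anisotropic mod $\ell_i$ precisely when this discriminant is a non-square mod $\ell_i$ (or, for $\ell_i = 2$, a mild local condition). So the task reduces to finding $\alpha \in \O_0$ whose trace and norm satisfy, simultaneously for all $\ell_i \mid L$, that $\tr(\alpha)^2 - 4\norm{\alpha}$ is a quadratic non-residue mod $\ell_i$.

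To produce such an $\alpha$: I would fix any $\mu \in \O_0$ with $\tr(\mu) = 0$ and $\norm{\mu} \ne 0$ (such a $\mu$ exists and can be read off a reduced basis of $\O_0$), and search over $\alpha = a + b\mu$ with small integers $a,b$; then $\tr(\alpha)^2 - 4\norm{\alpha} = -4b^2\norm{\mu}$, whose residue class mod $\ell_i$ depends only on $b \bmod \ell_i$ and on whether $-\norm{\mu}$ is a residue mod $\ell_i$ — and we can instead enlarge the search to $\alpha = a + b\mu + c\nu$ for a second trace-zero basis element $\nu$ so that the quadratic character of the discriminant mod each $\ell_i$ can be steered independently by CRT. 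Concretely, choosing one residue $r_i \bmod \ell_i$ for each $i$ with $-r_i^2\norm{\mu}$ (resp.\ the appropriate discriminant expression) a non-residue, then lifting via CRT to a single small integer, gives a valid $\alpha$ with coefficients of size $O(L) = O(\log p)$ times those of a reduced basis of $\O_0$, hence $\poly p$; the whole search inspects $O(\sum_i \ell_i) = O(\log p \cdot \log\log p)$ candidates, each tested by computing Legendre symbols, so the cost is $\polylog p$. The existence of a suitable residue $r_i$ for each $\ell_i$ is the only point needing care: for $\ell_i$ odd it holds because exactly half the nonzero residues are non-squares, so we just need $\norm{\mu}$ (or the richer discriminant expression after adding $\nu$) not to be forced into the "wrong" class — using the freedom of the second variable $\nu$ this is always achievable, and for $\ell_i = 2$ one checks the finitely many cases directly. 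This case analysis for the small primes dividing $L$, together with verifying that the CRT lift keeps the coefficients polynomially bounded, is the main (though mild) obstacle; everything else is bookkeeping.
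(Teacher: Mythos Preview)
Your treatment of Step~\ref{step: compute rotating endomorphism} is essentially right in spirit: the condition $\gcd(\norm{x+y\alpha},L)=1$ for all coprime $(x,y)$ is indeed equivalent to the characteristic polynomial $X^2-\tr(\alpha)X+\norm{\alpha}$ having no root modulo each $\ell_i\mid L$, and solving this locally then lifting by CRT is exactly what the paper does. The paper's execution is simpler, though: rather than manipulating trace-zero elements $\mu,\nu$ and steering Legendre symbols, it just observes that $\O_0/L_i\O_0\cong M_2(\ZZ/L_i\ZZ)$, so one can brute-force search the $L_i^4$ elements for one with irreducible characteristic polynomial (such elements obviously exist), and since each $L_i=O(\log p)$ this costs $\polylog p$ per prime.

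There are, however, two genuine problems with your Step~\ref{step: compute starting ideal} argument. First, you assert $L=O(\log p)$, but this is false: each factor $L_i$ is $O(\log p)$, yet $L=\prod_i L_i$ is chosen so that $\prod_i\Phi(L_i)=\Theta(p)$ (this is the whole point of the good-degree basis in \cref{claim: log factor basis}), so $L=\Theta(p)$. Your probability estimate for finding $\alpha_0$ with $L\mid n(\alpha_0)$ by random sampling therefore does not go through as stated. Second, and more structurally, merely having $L\mid n(\alpha_0)$ does not guarantee that $\O_0\langle\alpha_0,L\rangle$ has norm exactly $L$, nor that each local piece $\O_0\langle\alpha_0,L_i\rangle$ is a \emph{cyclic} ideal of norm $L_i$ (which the algorithm needs so that $E_0[I_0+\O_0 L_i]$ is cyclic of order $L_i$). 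The paper resolves this with the same eigenvalue viewpoint used for Step~\ref{step: compute rotating endomorphism}: it finds $\beta_0\in\O_0$ whose action on $E_0[L_i]$ has two \emph{distinct} eigenvalues for every $i$ (again by local enumeration and CRT), and sets $\alpha_0=\beta_0-\lambda$ for one eigenvalue $\lambda$; then $\alpha_0$ has a one-dimensional kernel on each $E_0[L_i]$, forcing the ideal to be cyclic of norm $L_i$ locally and hence of norm $L$ globally. Your ``random $\alpha_0$'' shortcut skips this and leaves a gap.
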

\begin{proof}
First, note that we can fix a basis of $\O_0$ with coefficients (over the canonical basis of $\QA$) in $O(p)$ (see \cite[Section 2.3]{KLPT14} for instance). 

The Steps~\ref{step: compute starting ideal} and \ref{step: compute rotating endomorphism} can be solved in a similar manner despite a final goal that is quite different. 
For Step~\ref{step: compute starting ideal}, it is sufficient to find a $\beta_0$ such that that the matrix of the action of $\beta_0$ on a basis of the $L$-torsion has two distinct eigenvalues. Then, we can take $\alpha_0 = \beta_0 - \lambda$ where $\lambda$ is one of the two eigenvalues. Note that this is equivalent to saying that $\beta_0$ needs to have two distinct eigenvalues mod $L_i$ for all $1 \leq i \leq n$.  

For Step~\ref{step: compute rotating endomorphism}, a sufficient condition to obtain $\alpha$ such that $\gcd( \norm{x + y\alpha},L) =1$ for all $x,y$ with $\gcd(x,y,L) =1$ is to have that the matrix of $\alpha$ over a basis of the $L_i$-torsion with no eigenvalues for all $1 \leq i \leq n$. 

The existence and value of these eigenvalues $\mod L_i$ for a quaternion element $\beta$ can be verified directly by computing the roots of the polynomial $X^2 + \tr(\beta) X + \norm{\beta} \mod L_i$.  

Thus, to solve the two steps, we can apply the following method. 
First, for each $L_i$ dividing $L$, find one element $\beta_{0,i}$ (resp. $\alpha_i$) in $\O_0 /L_i \O_0 $ with two distinct (resp. no) eigenvalues $\mod L_i$. 
Since the ring $\O_0 / L_i \O_0$ is isomorphic to $\MM_2(\ZZ/L_i \ZZ)$ it is clear that a solution can be found by enumerating over the $L_i^4$ elements of $\O_0 / L_i \O_0$. 

Then, the final element $\beta_0$ (resp. $\alpha$) can be obtained by CRT (doing this coefficient-wise over the basis of $\O_0$). The coefficients of $\beta_0$ (resp. $\alpha$) will have size $O(L)$ over the basis of $\O_0$ and so we get the desired result by taking into account the coefficients of this basis over the canonical basis of $\QA$.
For each $L_i$, we have to enumerate at most $L_i^4$ quaternion elements, so the final complexity statements follows from the complexity of computing modular squareroot and the complexity of CRT.
\end{proof}

\begin{proposition}
\label{prop: complexity loop basis}
The FOR loop in line~\ref{loop: basis computation} can be executed in $\polylog{p}$. 
\end{proposition}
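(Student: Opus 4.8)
The plan is to bound the work per iteration of the loop and then multiply by the number of iterations. By \cref{claim: log factor basis} the good-degree basis $L_1,\dots,L_n$ used in \cref{alg: orders to j-inv} has $n = O(\log p)$ factors, each of the form $L_i = \ell_i^{e_i}$ with $L_i \le \Phi(L_i) < C_0\log p$, so every $L_i$ is $O(\log p)$-smooth and of size $O(\log p)$. By \cref{lemma: torsion degree}, for each $i$ there is a model $\hat{E}_{0,i}\cong E_0$ (with the isomorphism $\hat{E}_{0,i}\to E_0$ defined over $\FF_{p^{m_i}}$ and computable in $\polylog p$ — a quadratic twist together with a discrete logarithm in a tiny group) such that $\hat{E}_{0,i}[L_i]\subseteq \hat{E}_{0,i}(\FF_{p^{m_i}})$ with $m_i\le L_i = O(\log p)$. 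Hence every field operation occurring in the $i$-th iteration takes place in some $\FF_{p^{m_i}}$ and costs $\softO(m_i\log p) = \polylog p$. Since there are only $n = O(\log p)$ iterations, it suffices to show that a single iteration runs in $\polylog p$.

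Inside iteration $i$ there are three tasks. First, to produce the basis $P_{0,i},Q_{0,i}$ of $E_0[L_i]$ I would work on $\hat{E}_{0,i}$: compute $\#\hat{E}_{0,i}(\FF_{p^{m_i}})$, which for a supersingular curve only requires pinning down the trace of Frobenius among $O(1)$ candidates; write this order as $\ell_i^{a}c$ with $\gcd(c,\ell_i)=1$; and repeatedly multiply uniformly random points by $c\,\ell_i^{a-e_i}$ to land in $\hat{E}_{0,i}[L_i]\cong(\ZZ/L_i\ZZ)^2$, keeping two points that together generate it — generation being tested by checking that the Weil pairing $e_{L_i}$ of the two points has order $L_i$. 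Each trial succeeds with probability bounded away from $0$ and costs $O(m_i\log p)$ field operations, so this is $\polylog p$; transporting $P_{0,i},Q_{0,i}$ back to $E_0$ along the isomorphism is also $\polylog p$. Second, for $R_{0,i}$: since $L_i\mid L$ and $I_0=\O_0\langle\alpha_0,L\rangle$ we have $I_0+\O_0 L_i=\O_0\langle\alpha_0,L_i\rangle$, whence $\langle R_{0,i}\rangle = E_0[I_0+\O_0 L_i] = \ker\alpha_0\cap E_0[L_i]$, which by the construction of $\alpha_0$ in \cref{prop: complexity rotating endomorphism} is cyclic of order $L_i$; so, knowing the $2\times 2$ matrix of $\alpha_0$ on $(P_{0,i},Q_{0,i})$ modulo $L_i$, I extract a kernel generator $R_{0,i}$ by routine linear algebra over $\ZZ/L_i\ZZ$. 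Third, $S_{0,i}=\alpha(R_{0,i})$ is one more endomorphism evaluation.

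The step requiring real care — and the one I expect to be the main obstacle in a fully detailed write-up — is evaluating the fixed endomorphisms $\alpha_0$ and $\alpha$ on the $L_i$-torsion in $\polylog p$, given that by \cref{prop: complexity rotating endomorphism} their coordinates on the canonical basis of $\QA$ are only bounded by $\poly p$. For this I would use the explicit structure of $E_0$ from \cref{prop: complexity starting elements}: $\End(E_0)$ contains the suborder $\langle 1,\iota,\pi,\iota\pi\rangle$ of index $O(\log^2 p)$, where $\pi$ is the Frobenius and $\iota$ has norm $O(\log^2 p)$ and hence is evaluated on any point through the isogeny of degree $\norm{\iota}$ it factors through (Vélu). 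Writing $\alpha_0$ (resp.\ $\alpha$) over the basis $1,\iota,\pi,\iota\pi$ with integer coordinates of size $\poly p$ and reducing those coordinates and the index modulo $L_i$ gives the desired action on $E_0[L_i]$ whenever $L_i$ is coprime to the index and to $\norm{\iota}$; for the few indices $i$ where this fails one first lifts to a slightly larger torsion group (still of polylogarithmic order and defined over an extension of polylogarithmic degree) and pushes the computation down from there. This is exactly the endomorphism-evaluation subroutine underlying the effective Deuring correspondence, so I would cite \cite{EHLMP18,leroux2022quaternion} rather than reprove it. Summing the $\polylog p$ cost over the $n = O(\log p)$ iterations then yields the claim.
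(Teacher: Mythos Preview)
Your proposal is correct and follows essentially the same approach as the paper: bound the number of iterations by $O(\log p)$ via \cref{claim: log factor basis}, compute the torsion basis over the small extension guaranteed by \cref{lemma: torsion degree}, obtain $R_{0,i}$ as a kernel generator of the ideal (the paper simply cites \cite{GPS17} here, while you spell out the linear-algebra step explicitly), and evaluate $\alpha$ via the explicit suborder $\langle 1,\iota,\pi,\iota\pi\rangle$ coming from \cref{prop: complexity starting elements}. Your treatment is in fact more careful than the paper's on the endomorphism-evaluation step—you flag and handle the coprimality issue between $L_i$ and the index of the suborder, which the paper glosses over.
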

\begin{proof}
The loop is repeated $O(\log p)$ times. 
Computing a basis is a very standard task and it can be done in $\polylog{p}$ since $m_i = O(\log p)$. 
The generator $R_{0,i}$ can be computed in $\polylog{p}$ using the algorithm described in \cite{GPS17} to find the kernel of an ideal. 
Evaluating the endomorphism $\alpha$ can be done by evaluating a basis of $\End(E_0)$ and then performing the scalar multiplications corresponding to the coefficients of $\alpha$ in this basis. The first part can be done in $\polylog{p}$ with the ideas of the proof of \cref{prop: complexity starting elements} by choice of $E_0$ and the second can also be done in $\polylog{p}$ by the size bound given on the coefficients of $\alpha$ in \cref{prop: complexity rotating endomorphism}. 
\end{proof}

The main computational task of \OrdersToJName{} is quite clearly performed during the loop in line~\ref{loop: cofactors}. This FOR loop contains two inner loops. We will incrementally provide complexity statements for each of these loops in orders to clearly decompose the cost of each operations.

\begin{proposition}
\label{prop: complexity and correction step isogeny computation} 
At index $i<n - 6$,
Step~\ref{step: isogeny computation} produces a polynomial over $\FF_{p^2}$ of degree smaller than $L_i$ in $O(M_{\PP}(L_i)\log(L_i))$ operations over $\FF_{p^{m_i}}$. This polynomial defines uniquely the isogeny $\vphi$. 
At index $n-6 \leq i \leq n$, $L_i = \ell_i^{e_i}$ for some prime $\ell_i \leq 13$ (the $6$-th prime) and Step~\ref{step: isogeny computation} produces $e_i$ polynomials of degree $\ell_i$ over $\FF_{p^2}$ in $O(e_i^2)$ operations over $\FF_{p^{m_i}}$. These polynomials uniquely defines the isogeny $\vphi$.
\end{proposition}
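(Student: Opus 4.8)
The plan is to handle the two regimes of the statement separately, after first isolating the one common observation: the isogeny $\varphi$ of Step~\ref{step: isogeny computation} is defined over $\FF_{p^2}$, even though its kernel is described through a basis of $E[L_i]$ that lives only over $\FF_{p^{m_i}}$ (with $m_i = O(\log p)$ by \cref{lemma: torsion degree}). I would deduce this from the correctness analysis: as in the proof of \cref{prop: correct orders to j-inv}, the subgroup $\G{P}$ is the image, under the $\FF_{p^2}$-rational isogeny assembled at the previous levels, of the ideal kernel $E_0[I_i]$ of the left $\O_0$-ideal $I_i = \O_0\G{\alpha_0(C + D\overline{\alpha}), L_i}$; since ideal kernels of left $\End(E_0)$-ideals are $\FF_{p^2}$-rational and this is preserved under $\FF_{p^2}$-rational isogenies, $\varphi$, its codomain, and its kernel polynomial all descend to $\FF_{p^2}$, while the intermediate computations stay over $\FF_{p^{m_i}}$.

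For $i < n-6$, I would compute $\varphi$ with a fast isogeny routine. First enumerate the $(L_i - 1)/2$ distinct $x$-coordinates of the nonzero points of $\G{P}$ by iterated addition on $E$ over $\FF_{p^{m_i}}$, at cost $O(L_i) = O(M_\PP(L_i))$ operations; next build the kernel polynomial $\prod (x - x_Q)$, of degree $(L_i-1)/2 < L_i$, with a subproduct tree in $O(M_\PP(L_i)\log(L_i))$ operations; finally recover from it the rational map of $\varphi$ and the coefficients of $E/\G{P}$ via the fast evaluation of Vélu's formulas in a further $O(M_\PP(L_i))$ operations. The dominant term is the subproduct tree, so the step costs $O(M_\PP(L_i)\log(L_i))$ operations over $\FF_{p^{m_i}}$, and this single polynomial, together with the model of $E$, determines $\varphi$ once the usual normalisation of the codomain is fixed.

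For $n-6 \le i \le n$, the prime $\ell_i$ is one of the first few primes, so $\ell_i \le 13$ is a constant and, from $\Phi(L_i) < C_0 \log(C_4 p)$ in \cref{claim: log factor basis}, we get $e_i = \log_{\ell_i} L_i = O(\log\log p)$. Here I would decompose $\varphi = \varphi_{e_i} \circ \cdots \circ \varphi_1$ into $\ell_i$-isogenies and process the chain with a running point: at step $k$ the generator of $\ker \varphi_k$ is $[\ell_i^{e_i - k}] P_{k-1}$, obtained by $O(e_i - k)$ multiplications by the constant $\ell_i$; one applies Vélu at the constant-degree prime $\ell_i$ in $O(1)$ operations to get the degree-$\ell_i$ kernel polynomial of $\varphi_k$ together with the next curve, then pushes $P_{k-1}$ through $\varphi_k$ in $O(1)$ operations. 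Summing the per-step costs over $k = 1, \ldots, e_i$ gives $O(e_i^2)$ operations over $\FF_{p^{m_i}}$, and the output is the list of $e_i$ kernel polynomials of degree $\ell_i$ whose composition is $\varphi$.

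Apart from the routine cost bookkeeping, the point I expect to need the most care is the descent claim of the first paragraph: one must check that $\G{P}$ — written as a combination $[C]R_i + [D]S_i$ of a basis of the $L_i$-torsion over the extension $\FF_{p^{m_i}}$ — is genuinely stable under the Frobenius of $\FF_{p^2}$, so that the computed kernel polynomials land in $\FF_{p^2}[x]$ and not merely in $\FF_{p^{m_i}}[x]$. This is exactly what the identification of $\G{P}$ with an ideal kernel gives, and it is where the special choices of $R_{0,i}, S_{0,i}$ and of the endomorphism $\alpha$ in the algorithm are used.
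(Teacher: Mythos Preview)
Your proposal is correct and follows essentially the same route as the paper: enumerate the kernel from a generator, build the kernel polynomial by a subproduct tree for the first regime, and decompose into a chain of constant-degree $\ell_i$-isogenies computed naively in $O(e_i^2)$ for the second. The only difference is one of emphasis: the paper simply asserts that the kernel polynomial lands in $\FF_{p^2}[x]$ (invoking the standard fact for supersingular curves), whereas you justify it more carefully via the identification of $\langle P\rangle$ with an ideal kernel pushed through the earlier $\FF_{p^2}$-rational isogenies---this extra care is fine but not strictly needed.
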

\begin{proof}

It can be shown with the Vélu formulaes \cite{V71}, that to represent an isogeny it suffices to compute its kernel polynomial,  \ie the polynomial whose roots are the x-coordinate of the points of the kernel and that this polynomial is always defined over $\FF_{p^2}$ even if the kernel points are not. 

From a single kernel point, the $O(L_i)$ points of the kernel can be generated in $O(L_i) $ operations over $\FF_{p^{m_i}}$. Then, the kernel polynomial can be constructed with complexity  $O(M_{\PP}(L_i)\log(L_i)) $ from its roots. 

When $n-6 \leq i \leq n$, we can write $L_i = \ell_i^{e_i}$ for some prime $\ell_i =O(1)$ and then, we can factor our isogeny of degree $L_i$ as $e_i$ isogenies of degree $\ell_i$. All these isogenies can be computed in time $O(e_i^2)$ from a kernel generator (see \cite{DFJ11} for more on this topic). 
\end{proof}

\begin{proposition}
\label{prop: complexity evaluation step loop isogenies}
There exists a constant $C$ such that, at any index $i\leq n-6$, the number of $\FF_p$-operations executed in Steps~\ref{step: evaluation1},\ref{step: evaluation2} of the FOR loop in line~\ref{loop: isogenies} is upper bounded by $C (n-i) \log p  M_{\PP} (\log p) $.
When $i > n-6$, we have the upper-bound $C (n-i) \llog(p)^2  M_{\PP} (\log p) $
\end{proposition}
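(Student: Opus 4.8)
The plan is to write the cost of one execution of Steps~\ref{step: evaluation1}--\ref{step: evaluation2} as a product of three factors: the number of points that are pushed through $\vphi$, the number of field operations needed to push one of them, and the $\FF_p$-cost of a single field operation. The two steps push the $2(n-i)$ points $R_{i+1},\dots,R_n,S_{i+1},\dots,S_n$ through the degree-$L_i$ isogeny $\vphi$ computed at Step~\ref{step: isogeny computation}, so the outer factor is $2(n-i)$. For the ground field I would recall from \cref{lemma: torsion degree} that each $L_j$-torsion point can be taken over an even-degree extension $\FF_{p^{m_j}}$ with $m_j\le L_j$, and then use that $L_1,\dots,L_n$ is a good-degree basis (\cref{def: good degree basis,claim: log factor basis}), so $L_j\le\Phi(L_j)<C_0\log(C_4p)=O(\log p)$ and hence $m_j=O(\log p)$; a single operation in $\FF_{p^{m_j}}$ then costs $O(M_\PP(m_j))=O(M_\PP(\log p))$ operations over $\FF_p$.

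The per-push count then splits into the two regimes. For $i\le n-6$, \cref{prop: complexity and correction step isogeny computation} gives $\vphi$ as a kernel polynomial of degree $<L_i$, and I would argue that the Vélu rational map can be evaluated at one point with $O(L_i)$ operations over $\FF_{p^{m_j}}$ (Horner evaluation of the kernel polynomial and its first derivatives, then a bounded amount of combining). Since $L_i\le\Phi(L_i)=O(\log p)$, multiplying the three factors gives $C(n-i)\log p\,M_\PP(\log p)$. For $i>n-6$, \cref{prop: complexity and correction step isogeny computation} gives $\vphi$ as a chain of $e_i$ degree-$\ell_i$ isogenies with $\ell_i\le 13$, each of constant degree, so one push costs $O(e_i^2)$ operations over $\FF_{p^{m_j}}$ (even $O(e_i)$ if the small kernel polynomials are cached); from $2^{e_i}\le\ell_i^{e_i}=L_i=O(\log p)$ one gets $e_i=O(\llog p)$, and multiplying the three factors gives $C(n-i)\llog(p)^2\,M_\PP(\log p)$.

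The step I expect to be the main obstacle is the control of the field of definition: a naive evaluation of $\vphi$ at the pushed points would involve the compositum $\FF_{p^{\lcm(m_i,m_j)}}$, of degree up to $\Theta(\log^2 p)$, which would cost an extra $\log p$ factor and break both bounds. To rule this out I would use that the starting curve $E_0$ of Step~\ref{step: compute starting curve} is chosen (see the proof of \cref{prop: complexity starting elements}) with $\pi\in\End(E_0)$, so that its $p^2$-Frobenius equals $[-p]$ and is central; by the composition argument (a morphism $g$ with $g\circ f$ and $f$ defined over a field $k$ and $f$ surjective is itself defined over $k$) every curve and every isogeny appearing in the $L$-isogeny chain is then defined over $\FF_{p^2}$, hence so are all the kernel polynomials of the factors of $\vphi$. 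Together with the even degree of $m_j$ from \cref{lemma: torsion degree}, which gives $\FF_{p^2}\subseteq\FF_{p^{m_j}}$, this confines every evaluation to $\FF_{p^{m_j}}$. Once this is in place the rest is routine bookkeeping, the only quantitative inputs being $\Phi(L_i)=O(\log p)$ and the boundedness of the primes $\ell_i$ for the tail indices.
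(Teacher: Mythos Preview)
Your proof is correct and follows essentially the same decomposition as the paper: count the $2(n-i)$ points to push, bound the per-push cost via \cref{prop: complexity and correction step isogeny computation} (by $O(L_i)=O(\log p)$ for $i\le n-6$ and by $O(e_i)$ with $e_i=O(\llog p)$ for the tail), and convert to $\FF_p$-operations using $m_j=O(\log p)$ from \cref{lemma: torsion degree}.

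Where you go further than the paper is the field-of-definition paragraph. The paper's proof simply asserts that each evaluation takes place in $\FF_{p^{m_j}}$, relying implicitly on the claim in \cref{prop: complexity and correction step isogeny computation} that the kernel polynomial (or the $e_i$ small kernel polynomials) lies over $\FF_{p^2}$; it does not spell out why one avoids the compositum $\FF_{p^{\lcm(m_i,m_j)}}$. Your argument---that the choice of $E_0$ in Step~\ref{step: compute starting curve} makes the $p^2$-power Frobenius central, forcing every intermediate curve and isogeny in the chain to be $\FF_{p^2}$-rational, and then using the evenness of $m_j$ from \cref{lemma: torsion degree} to embed $\FF_{p^2}\hookrightarrow\FF_{p^{m_j}}$---supplies exactly the missing justification. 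This is a genuine strengthening of the paper's exposition; the rest of your bookkeeping matches the paper line for line. (Your remark that one push through the tail chain actually costs $O(e_i)$ rather than $O(e_i^2)$ is also correct---the stated bound with $\llog(p)^2$ is simply not sharp, and the looser exponent is harmless downstream.)
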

\begin{proof}
For each $i<j\leq n$,
we need to evaluate the polynomial produced by Step~\ref{step: isogeny computation} on the points $R_{i+1},\ldots,R_n$ and $S_{i+1,\ldots R_n}$. 
By \cref{prop: complexity and correction step isogeny computation}, when $i \leq n-6$, each evaluation costs $O(L_i)$ operations over $\FF_{p^{m_j}}$. The cost of arithmetic over $\FF_{p^{m_j}}$ in operations over $\FF_p$ is upper-bound by $C' M_{\PP} (m_j)$ for some constant $C'$.
So we get the result from $L_i =O(\log p)$ and $m_j =O (\log p)$ by \cref{lemma: torsion degree}. 

When $i > n-6$, there are $e_i = O (\llog(p))$ polynomials of degree $\ell_i = O(1)$ and we get the desired result. 

\end{proof}

\begin{remark}
The Vélusqrt algorithm from \cite{BFLS20} cannot be applied here because the kernel of the isogenies and the point on which the evaluation is performed do not live in the same extension. 
\end{remark}

\begin{proposition}
\label{prop: complexity loop isogenies}
There exists a constant $C$ such that, at any index $i \leq  n-6$, the number of binary operations executed in each execution of the FOR loop in line~\ref{loop: isogenies} is upper bounded by $C \Phi(L_i) (n-i) \log p \llog(p)  M_{\PP} (\log p) M_\ZZ(\log p)$.
When $ n-6 < i < n$, the number of binary operations is smaller than $$ C \Phi(L_i) \llog(p)^2 M_{\PP} (\log p) M_\ZZ(\log p) .$$
\end{proposition}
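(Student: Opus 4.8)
The plan is to read off the cost of one iteration of the loop in line~\ref{loop: isogenies} from the three step-level propositions already established, and then multiply by the number of iterations. First I would record that, since $\G{R_i,S_i}\cong(\ZZ/L_i\ZZ)^2$ (this is part of the correctness argument of \cref{prop: correct orders to j-inv}), the loop runs exactly $\Phi(L_i)$ times, once for each cyclic subgroup of order $L_i$. Thus it suffices to bound the number of binary operations in a single iteration by $C(n-i)\log p\,\llog p\,M_\PP(\log p)\,M_\ZZ(\log p)$ when $i\le n-6$, and by $C\,\llog(p)^2\,M_\PP(\log p)\,M_\ZZ(\log p)$ when $n-6<i<n$, after which multiplying by $\Phi(L_i)$ yields the two stated bounds.

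To bound a single iteration I would split it into three parts. First, computing $P=[C]R_i+[D]S_i$ costs $O(\llog L_i)$ group operations over $\FF_{p^{m_i}}$, which is negligible. Second, the quaternionic steps: forming $J=\O_0\G{\alpha_0(C+D\overline\alpha),L_i}$, the intersection $K=I\cap J$, the right order $\O=\O_R(K)$, and the invariant $H(\O)$. Since $\norm{I}$ divides $L=\prod_j L_j=O(p)$ and, by \cref{prop: complexity rotating endomorphism}, $\alpha_0$ and $\alpha$ have coefficients polynomial in $p$ over the canonical basis of $\QA$, all the rank-$4$ lattices and quaternion elements that occur have coefficients bounded by $\poly{p}$; hence $J$, $K$ and $\O_R(K)$ are obtained by Hermite-normal-form and linear-algebra computations in $\polylog{p}$ binary operations, and $H(\O)$ in $O(\log p^{1+\varepsilon})$ binary operations by \cref{prop: hash complexity}. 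Together with the $O(1)$ updates to $M$ and to the lists, this part is therefore dominated. Third, the isogeny $\vphi$ of Step~\ref{step: isogeny computation} and the evaluations of Steps~\ref{step: evaluation1} and~\ref{step: evaluation2} (present only when $i<n$) carry the dominant cost, and here I would invoke \cref{prop: complexity and correction step isogeny computation} and \cref{prop: complexity evaluation step loop isogenies}, converting the operation counts they give into binary operations: an arithmetic operation in $\FF_{p^{m_i}}$ costs $O(M_\PP(m_i))$ operations in $\FF_p$, and by \cref{lemma: torsion degree} together with $L_i=O(\log p)$ from the good-degree basis (\cref{claim: log factor basis}) we have $m_i\le L_i=O(\log p)$, so this is $O(M_\PP(\log p))$ operations in $\FF_p$, each costing $M_\ZZ(\log p)$ binary operations.

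Finally I would treat the two ranges of $i$ separately. For $i\le n-6$, \cref{prop: complexity and correction step isogeny computation} bounds Step~\ref{step: isogeny computation} by $O(M_\PP(L_i)\log L_i)=O(M_\PP(\log p)\,\llog p)$ operations in $\FF_{p^{m_i}}$, while \cref{prop: complexity evaluation step loop isogenies} bounds Steps~\ref{step: evaluation1} and~\ref{step: evaluation2} by $O((n-i)\log p\,M_\PP(\log p))$ operations in $\FF_p$; converting both to binary operations and adding gives the per-iteration bound $O((n-i)\log p\,\llog p\,M_\PP(\log p)\,M_\ZZ(\log p))$. For $n-6<i<n$ one has $L_i=\ell_i^{e_i}$ with $\ell_i=O(1)$ and $e_i=O(\llog p)$, so Step~\ref{step: isogeny computation} is a chain of $e_i$ isogenies of constant degree costing $O(e_i^2)=O(\llog(p)^2)$ operations in $\FF_{p^{m_i}}$, the evaluation steps cost $O((n-i)\,\llog(p)^2\,M_\PP(\log p))$ operations in $\FF_p$ with $n-i\le 6=O(1)$, and converting yields the second per-iteration bound. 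Multiplying each by $\Phi(L_i)$ finishes the argument. The part I expect to be most delicate is exactly this last bookkeeping: unfolding $\FF_{p^{m_i}}$-arithmetic into $\FF_p$- and then binary operations while keeping the logarithmic-in-$\log p$ factors (the $\llog p$ from the fast product-tree construction of the kernel polynomial, and the $\llog(p)^2$ from the constant-degree isogeny chain) in the precise places claimed, so that the cost of Step~\ref{step: isogeny computation} is absorbed by the evaluation cost rather than dominating it.
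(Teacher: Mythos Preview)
Your proposal is correct and follows essentially the same approach as the paper: count the $\Phi(L_i)$ iterations, argue that the quaternionic and hash-map steps are negligible, and then invoke \cref{prop: complexity and correction step isogeny computation} and \cref{prop: complexity evaluation step loop isogenies} to bound the isogeny and evaluation costs, converting $\FF_{p^{m_i}}$- and $\FF_p$-operations into binary operations via $M_\PP(\log p)$ and $M_\ZZ(\log p)$, and treating the two ranges of $i$ separately. If anything, your bookkeeping is slightly more explicit than the paper's (you spell out why $m_i=O(\log p)$ via \cref{lemma: torsion degree} and the good-degree basis, and you make the $\FF_{p^{m_i}}\to\FF_p\to$ binary conversion chain explicit), but there is no substantive difference in method.
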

\begin{proof}
For each execution of this loop, the number of iteration is exactly $\Phi(L_i)$. 
Arithmetic over quaternion orders and ideals such as intersection and right order computation can be performed in $O(M_\ZZ(\log p))$ (because the coefficients have size in $O(\log p)$ and these operations are simple linear algebra in dimension $4$). Then, the hash can be computed in $O(\log p^{1+\varepsilon})$. Thus, the total cost of the loop is $O( \Phi(L_i) \log p^{1+\varepsilon} )$ for quaternion operations for any $i$. This is negligible compared to other operations when $i< n$. 

The verification that $H(\O) \in \frakS$ and insertion in the hash map operation can be done in $O(1)$ with the appropriate hash-map structure. 

Then, there are the cost of operations over $\FF_p$ for the isogeny computation. To derive the total complexity, we apply \cref{prop: complexity and correction step isogeny computation,prop: complexity evaluation step loop isogenies}. Arithmetic over $\FF_p$ takes $O(M_\ZZ(\log p))$ binary operations. 
For any $i$, the kernel computation is negligible.
When $i \leq n-6$, the isogeny computation takes 
$$O( \Phi(L_i) \log p \llog(p) M_{\PP} (\log p) M_\ZZ(\log p)),$$ then the evaluations take $O( (n-i) \Phi(L_i) \log p M_{\PP} (\log p) M_\ZZ(\log p))$.

Similarly, when $n-6 < i< n$, this cost is replaced by $$O( \Phi(L_i) \llog(p)^2 M_{\PP} (\log p) M_\ZZ(\log p)).$$ 
\end{proof}

\begin{proposition}
\label{prop: complexity loop inner}
There exists a constant $C$ such that, at any index $i \leq n-6$, the number of binary operations executed in the FOR loop in line~\ref{loop: inner list} is upper bounded by $ C \prod_{j=1}^i \Phi(L_j) (n-i) \log p^{3 + \varepsilon} $.
When $n-6 <i < n$, it is upper bounded by $C \prod_{j=1}^i \Phi(L_j) (n-i) \log p^{2 + \varepsilon}$. 
When $i=n$, it is upper-bound by $C (\#\frakS  \log p^{2 + \varepsilon}+ p \log p^{1+\varepsilon} ) $. 
\end{proposition}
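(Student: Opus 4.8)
The plan is to combine the per-execution bound of \cref{prop: complexity loop isogenies} for the loop in line~\ref{loop: isogenies} with an exact count of the length of \texttt{List} at each stage. First I would record that when index $i$ is reached in the loop of line~\ref{loop: cofactors}, the list \texttt{List} has at most $\prod_{j=1}^{i-1}\Phi(L_j)$ entries: it is initialised as a singleton, and during the iteration of any index $j$ each of its entries contributes exactly $\Phi(L_j)$ tuples to \texttt{NewList}, because the loop in line~\ref{loop: isogenies} ranges over the cyclic subgroups of order $L_j$ of $\G{R_j,S_j}$, which are in bijection with the cyclic $L_j$-isogenies from the corresponding curve --- the same bijection used in the proof of \cref{prop: correct orders to j-inv}. (An early return can only make \texttt{List} shorter, so it is harmless for an upper bound.)

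For $i<n$, the loop in line~\ref{loop: inner list} then consists of at most $\prod_{j=1}^{i-1}\Phi(L_j)$ executions of the loop in line~\ref{loop: isogenies}, plus $O(1)$ per entry to parse $x$ and extend \texttt{NewList}, which is dominated by the costs already counted in \cref{prop: complexity loop isogenies}. Multiplying the first bound there by $\prod_{j=1}^{i-1}\Phi(L_j)$ gives, for $i\leq n-6$, a total of $O\bigl(\prod_{j=1}^{i}\Phi(L_j)\,(n-i)\,\log p\,\llog(p)\,M_\PP(\log p)\,M_\ZZ(\log p)\bigr)$ binary operations; since $\llog(p)$, $M_\PP(\log p)$ and $M_\ZZ(\log p)$ are each $O(\log p^{1+\varepsilon})$, this is $O\bigl(\prod_{j=1}^{i}\Phi(L_j)\,(n-i)\,\log p^{3+\varepsilon}\bigr)$. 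For $n-6<i<n$, the same multiplication with the second bound of \cref{prop: complexity loop isogenies} gives $O\bigl(\prod_{j=1}^{i}\Phi(L_j)\,\llog(p)^2\,M_\PP(\log p)\,M_\ZZ(\log p)\bigr)=O\bigl(\prod_{j=1}^{i}\Phi(L_j)\,\log p^{2+\varepsilon}\bigr)$, and since $n-i\leq 6$ on that range one may reinsert the factor $(n-i)$ for free to obtain the stated form.

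The case $i=n$ must be treated separately, since there the body of the loop in line~\ref{loop: isogenies} does not push torsion points forward (the branch $i<n$ is skipped) and computes $\vphi\colon E\to E/\G{P}$ and $j(E/\G{P})$ only inside the conditional that records a new pair in $M$. I would split the $\prod_{j=1}^{n}\Phi(L_j)$ iterations of the innermost loop into a cheap \emph{quaternion part} and a rare \emph{elliptic part}. The quaternion part --- forming $P$, the ideal $J$, the intersection $K=I\cap J$, the right order $\O=\O_R(K)$, the value $H(\O)$ (via \cref{prop: hash complexity}) and the membership/duplicate tests --- costs $O(\log p^{1+\varepsilon})$ per iteration, exactly as in the proof of \cref{prop: complexity loop isogenies}; by \cref{claim: log factor basis} applied to $N=C_4p$ the total number of iterations $\prod_{j=1}^n\Phi(L_j)$ is $\Theta(p)$, so this part contributes $O(p\log p^{1+\varepsilon})$. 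The elliptic part runs only when a genuinely new type is found, hence at most $\#\frakS$ times; since $L_n=2^{e_n}$ with $e_n=O(\llog p)$, \cref{prop: complexity and correction step isogeny computation} bounds each such computation by $O(\llog(p)^2)$ operations over $\FF_{p^{m_n}}$, and with $m_n=O(\log p)$ from \cref{lemma: torsion degree} this is $O(\llog(p)^2\,M_\PP(\log p)\,M_\ZZ(\log p))=O(\log p^{2+\varepsilon})$ binary operations, for $O(\#\frakS\log p^{2+\varepsilon})$ in total. Adding the two parts yields $O(\#\frakS\log p^{2+\varepsilon}+p\log p^{1+\varepsilon})$; the early return can only lower this and is irrelevant to the upper bound.

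The main obstacle is the accounting in the $i=n$ case: one must argue that although the innermost loop runs $\Theta(p)$ times, only $O(\#\frakS)$ of those runs trigger the expensive isogeny-and-$j$-invariant step --- which is precisely what the ``not contained in $M$ already'' guard ensures --- while every other run performs only the cheap four-dimensional quaternion linear algebra, and one has to feed in the bound $\prod_{j=1}^n\Phi(L_j)=\Theta(p)$ coming from the good-degree-basis construction. The cases $i<n$ are then a mechanical combination of \cref{prop: complexity loop isogenies} with the length of \texttt{List}.
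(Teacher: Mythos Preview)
Your proposal is correct and follows essentially the same approach as the paper: count the length of \texttt{List} as $\prod_{j=1}^{i-1}\Phi(L_j)$ and multiply by the per-execution bound of \cref{prop: complexity loop isogenies} for $i<n$, then for $i=n$ split the work into the cheap quaternion part over all $\prod_{j=1}^n\Phi(L_j)=\Theta(p)$ iterations and the expensive isogeny part bounded by $\#\frakS$ occurrences. Your write-up is in fact slightly more detailed than the paper's own proof (e.g.\ making explicit why the $(n-i)$ factor may be reinserted for $n-6<i<n$, and why $L_n=2^{e_n}$ with $e_n=O(\llog p)$), but the structure and the key inputs are identical.
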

\begin{proof}
There are $\Phi(L_i)$ cyclic subgroups of order $L_i$. Thus at index $i< n$, the size of $\texttt{List}$ is $\prod_{j=1}^{i-1} \Phi(L_j)$ and the result follows directly from  \cref{prop: complexity loop isogenies}.

When $i = n$, we perform the quaternion computations (intersection, right order and computation of the hash value) for all $\prod_{j=1}^{n} \Phi(L_i)$ subgroups. Thus, since we have $\prod_{j=1}^{n} \Phi(L_j) =O(p)$ by \cref{claim: log factor basis,claim: number of maximal orders}, and the cost of quaternion operations is $O(\log p^{1 + \varepsilon})$ as in the proof of \cref{prop: complexity loop isogenies}, we get that the cost for quaternion operations is $O(p \log p^{1+\varepsilon} )$.
The isogeny computation is only performed when the right maximal order is contained in $\frakS$, thus, we can upper-bound the number of times where an isogeny is computed by $\# \frakS$. As for all the $i \geq n-6$, the cost of each $L_n$-isogeny computation is $O( \llog(p)^2 M_{\PP} (\log p) M_\ZZ(\log p))$ and this proves the result. 
\end{proof}

\begin{proposition}
The loop in line~\ref{loop: cofactors} can be executed in  $O(\#\frakS \log p^{2 + \varepsilon} + p \log p^{1+\varepsilon} )$ binary operations.
\end{proposition}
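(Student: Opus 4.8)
The plan is to derive the bound by summing, over the indices $i = 1,\dots,n$ of the outer loop in line~\ref{loop: cofactors}, the per-index cost estimates already proved in \cref{prop: complexity loop inner}; the early return triggered when $m = \# \frakS$ can only decrease the running time, so bounding this sum from above is enough. I would split the sum into the three ranges matching the three regimes of \cref{prop: complexity loop inner}: the bulk range $i \le n-6$, the short transitional range $n-6 < i < n$, and the terminal index $i = n$. Recall we assumed $n > 6$ (so that \cref{prop: log factor basis critical bound} applies) and that $L_1,\dots,L_n$ is a good-degree basis for $N := C_4 p$, whence $\prod_{j=1}^n \Phi(L_j) = \Theta(p)$ with each $\Phi(L_j) \ge \sqrt{C_0\log N}$ and $\Phi(L_n) \ge (C_0/2)\log N$.

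For the bulk range $i \le n-6$, I would pull the factor $\log p^{3+\varepsilon}$ out of the \cref{prop: complexity loop inner} bound, leaving $\sum_{i=1}^{n-6}(n-i)\prod_{j=1}^i\Phi(L_j)$, which is exactly the quantity estimated by \cref{prop: log factor basis critical bound}. Since the good-degree basis is for $N = C_4 p$, that proposition gives $\sum_{i=1}^{n-6}(n-i)\prod_{j=1}^i\Phi(L_j) < C_3 N/\log(N)^2 = O(p/\log(p)^2)$, so the bulk range contributes $O(p\log p^{1+\varepsilon})$. This is where the care taken in choosing $L$ power-smooth with coprime factors of size $\Theta(\log p)$ is cashed in, but the genuine work there is already contained in \cref{claim: log factor basis} and \cref{prop: log factor basis critical bound}, so in the present proof it is a direct application.

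For the transitional range $n-6 < i < n$ there are only the five indices $n-5,\dots,n-1$. For each of them $\prod_{j=1}^i\Phi(L_j) \le \prod_{j=1}^{n-1}\Phi(L_j) = (\prod_{j=1}^n\Phi(L_j))/\Phi(L_n) < C_2 N/((C_0/2)\log N) = O(p/\log p)$, and $n-i = O(1)$, so the $i<n$ estimate of \cref{prop: complexity loop inner} gives $O(p\log p^{1+\varepsilon})$ for each such index; summing over the $O(1)$ of them stays $O(p\log p^{1+\varepsilon})$. For $i = n$, \cref{prop: complexity loop inner} already states the cost is $O(\# \frakS\log p^{2+\varepsilon} + p\log p^{1+\varepsilon})$. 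Adding the three contributions yields $O(\# \frakS\log p^{2+\varepsilon} + p\log p^{1+\varepsilon})$, as claimed.

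I do not expect a real obstacle: the proposition is essentially a bookkeeping summation over the previously established per-step costs, and the only place where a naive estimate could go wrong — summing the $n = \Theta(\log p)$ terms $(n-i)\prod_{j\le i}\Phi(L_j)$ without picking up a spurious factor of $\log p$ or $n$ — is precisely what the geometric-series-type control in \cref{prop: log factor basis critical bound} was designed to handle.
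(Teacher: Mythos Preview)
Your proposal is correct and follows essentially the same approach as the paper: the same three-range split ($i\le n-6$, $n-6<i<n$, $i=n$), the same application of \cref{prop: log factor basis critical bound} to absorb the $\log p^{3+\varepsilon}$ factor in the bulk range, and the same $\Phi(L_n)=\Theta(\log p)$ bound to handle the constant number of transitional indices. There is nothing substantive to add.
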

\begin{proof}
The total cost of the loop is directly obtained by summing over all $i$ the  bounds in \cref{prop: complexity loop inner}.  
We start by summing over all $i\leq n-6$. 
We get an upper-bound on the number of binary operation by $C \sum_{i=1}^{n-6} (n-i) \prod_{j=1}^i \Phi(L_j) \log p^{3 + \varepsilon}$. So we get $O( p \log p^{1+ \varepsilon})$ after applying \cref{prop: log factor basis critical bound}. 

For $n-6 < i < n$, we get the upper-bound 
$$C \sum_{i=n-5}^{n-1} (n-i) \prod_{j=1}^i \Phi(L_j) \log p^{2 + \varepsilon}.$$
Since by \cref{claim: log factor basis}, $\Phi(L_n) > C_0/2 \log p$, we have $\prod_{j=1}^i \Phi(L_j) \leq C' p/\log p$ for some constant $C'$ and any $i < n$. Thus, since there is a constant number of summands, we get the cost is in $O( p \log p^{1+\varepsilon})$ for those indices. 

Finally, at $i =n$, we can apply directly the bound from \cref{prop: complexity loop inner}. The final cost is $O(\#\frakS \log p^{2 + \varepsilon} + p \log p^{1+\varepsilon} )$.
\end{proof}

All the results above lead directly to the following theorem. 

\begin{theorem}
\label{thm: complexity}
Under \cref{,claim: number of maximal orders},
on input $p$ and $\frakS$,
\OrdersToJName{} can be executed in $$O(\#\frakS \log p^{2 + \varepsilon} + p \log p^{1+\varepsilon} )$$ binary operations.
The space requirement of \OrdersToJName{} is in $O(p \log p)$. 
\end{theorem}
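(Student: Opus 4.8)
The plan is to obtain \cref{thm: complexity} by assembling the per-step bounds already established and then dealing with the space requirement, which the running-time analysis does not address, as a separate matter. I would first split \cref{alg: orders to j-inv} into its precomputation phase --- Steps~\ref{step: good degree basis}--\ref{step: compute rotating endomorphism} together with the \textbf{for} loop of line~\ref{loop: basis computation} --- and its main phase, the double loop starting at line~\ref{loop: cofactors}. For the precomputation: Step~\ref{step: good degree basis} is a greedy construction of a good-degree basis of $C_4p$ exactly as in the proof of \cref{claim: log factor basis}, of length $O(\log p)$ and involving only integers of bit-size $O(\log p)$, hence $\polylog{p}$; Step~\ref{step: compute starting curve} is $\polylog{p}$ by \cref{prop: complexity starting elements}; Steps~\ref{step: compute starting ideal}--\ref{step: compute rotating endomorphism} are $\polylog{p}$ by \cref{prop: complexity rotating endomorphism}; and the loop of line~\ref{loop: basis computation} is $\polylog{p}$ by \cref{prop: complexity loop basis}. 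All of this is absorbed into the error term $O(p\log p^{1+\varepsilon})$.

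The running time is therefore controlled by the loop of line~\ref{loop: cofactors}, for which the last proposition before the theorem already gives $O(\#\frakS\log p^{2+\varepsilon} + p\log p^{1+\varepsilon})$ binary operations --- this is where \cref{prop: complexity loop inner}, \cref{prop: log factor basis critical bound} and \cref{claim: log factor basis,claim: number of maximal orders} enter. The only point I would flag explicitly is that the operations on the accumulator $M$ (the membership tests $H(\O)\in\frakS$, the insertions, and the termination test $m = \#\frakS$) cost $O(1)$ each with a suitable hash-map structure, so they do not perturb the estimate; adding the precomputation cost then yields the time bound.

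For the space bound I would argue that the only data whose total size can grow with $p$ are the working list \texttt{List} of the loop of line~\ref{loop: cofactors} and the output $M$. Every quaternion object ($\O_0$, the ideals $I_0$, $J$, $K$, the elements $\alpha_0,\alpha$) has coefficients that are $\poly{p}$ over the canonical basis of $\QA$ by \cref{prop: complexity rotating endomorphism}, hence takes $O(\log p)$ bits; each curve is stored through a model over $\FF_{p^2}$, i.e.\ $O(\log p)$ bits; and, since $\Phi(L_i) = O(\log p)$ forces $L_i = O(\log p)$, \cref{lemma: torsion degree} guarantees that every torsion point handled by the algorithm lies in an extension of $\FF_p$ of degree $O(\log p)$ and so occupies $O(\log^2 p)$ bits. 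At the start of the iteration of index $i$, \texttt{List} has $\prod_{j=1}^{i-1}\Phi(L_j)$ entries, each holding one curve, one ideal and the $2(n-i+1)$ points $R_i,\dots,R_n,S_i,\dots,S_n$. Using $\Phi(L_j)\geq\sqrt{C_0\log p}$ for all $j$ together with $\Phi(L_n)\geq(C_0/2)\log p$ from \cref{claim: log factor basis}, I would bound $\prod_{j=1}^{i-1}\Phi(L_j) \leq C_2 p/\prod_{j=i}^{n}\Phi(L_j)$, which shows the total size of \texttt{List} to be $O\bigl((p/\log p)\cdot\log^2 p\bigr) = O(p\log p)$ when $i = n$ and geometrically smaller for $i < n$; the (possibly superfluous) \texttt{NewList} assembled during the final iteration holds $O(p)$ pairs (curve, ideal) of $O(\log p)$ bits, hence is also $O(p\log p)$ and can in fact be omitted. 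Finally $M$ contains at most $\#\frakS = O(p)$ pairs of size $O(\log p)$, and every other structure is $\polylog{p}$, so the total space is $O(p\log p)$.

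The time part is essentially a direct citation of the earlier propositions, so the only genuinely non-routine step is the space estimate: one must verify that the geometric growth in the number of list entries is outweighed by the shrinking number of torsion points stored per entry, so that the peak memory --- attained at the last iteration, where the lower bound $\Phi(L_n) = \Omega(\log p)$ is precisely what makes the bound work --- stays at $O(p\log p)$ instead of gaining an extra logarithmic factor. That is the one place where some care is genuinely required.
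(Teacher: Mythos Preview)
Your proof is correct. For the running-time bound you follow the paper exactly, assembling the per-step propositions and noting that the precomputation is $\polylog{p}$ and hence absorbed. For the space bound you take a slightly more direct route than the paper: the paper splits the indices into $i\le n-6$ (handled by invoking \cref{prop: log factor basis critical bound}) and $n-6\le i\le n$ (handled via $\Phi(L_n)=\Theta(\log p)$), whereas you give a single unified bound $\prod_{j<i}\Phi(L_j)\le C_2 p/\prod_{j\ge i}\Phi(L_j)$ using only the lower bounds $\Phi(L_j)\ge\sqrt{C_0\log p}$ and $\Phi(L_n)\ge(C_0/2)\log p$ from \cref{claim: log factor basis}, and observe that the peak occurs at $i=n$. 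Your accounting of $O(\log^2 p)$ bits per torsion point (because it lives in an extension of degree $O(\log p)$) is in fact more careful than the paper's own write-up; both arguments arrive at the same $O(p\log p)$ bound.
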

\begin{proof}
    The cost of Step~\ref{step: good degree basis} is negligible before all the other computations. The result follows from all the propositions proven in this section. 

    For the space requirement, the proof is quite similar. 
    After each iteration of the loop for an index $i\leq n$, the list contains $ \prod_{j=1}^i \Phi(L_i)$ entries of the form $E,K,P_{i+1},\ldots,P_n,Q_{i+1},\ldots,Q_n$ where $E$ is an elliptic curve, $K$ is an ideal of norm $\prod_{j=1}^i L_i$ and $P_j,Q_j$ are points of order $L_i$. Thus, this can be represented in $O ( \prod_{j=1}^i \Phi(L_i) (\log p + (n-i) \log(p^2))$. Using \cref{prop: log factor basis critical bound} for $i \leq n-6$ we see that this takes $O(p \log p)$ space. With $\Phi(L_n) = \Theta(\log(p))$, we get that $\prod_{j=1}^i \Phi(L_i) = O(p /\log p )$ and so the memory required for indices $n-6 \leq i < n$ is also in $O(p \log p)$. Finally, at index $i=n$ the second term is zero and so we get $O(p \log p)$. 
 
    Finally, the space required by the list $M$ is $O(\# \frakS \log p) = O(p  \log p) $, and this concludes the proof. 
\end{proof}

\paragraph*{Good choice of primes.} 
The analysis we provided above for both \OrdersToJSmall{} and \OrdersToJName{} does not assume anything on the prime $p$. There are some "nice" choices of primes $p$ for which we could basically gain a factor $\log p$ over all elliptic curve operations by having all the required torsion point defined over $\FF_{p^2}$ (thus saving the cost of operations over big $\FF_p$-extensions). 
Since we are interested in a generic statement, we do not bother with these marginal gains. 

In the context of applying \OrdersToJName{} to the CRT method, this idea will have its importance in the concrete choice of primes $p_i$. However, due to the linear dependency in $p$, it does not appear possible to select all CRT primes among these ``nice" primes. And so the CRT complexity will depend on the worst-case complexity of our algorithm \OrdersToJName{}.

\section{Computation of the Hilbert class polynomial}
\label{sec: class poly computation}





In this section, we introduce our two algorithms to compute Hilbert class polynomial. In \cref{sec: mod class poly computation}, we present the direct algorithm, and in \cref{sec: class poly comparison}, we introduce the CRT algorithm and we also provide a comparison with Sutherland's CRT algorithm from \cite{sutherland2011computing}.

\subsection{Computing the class polynomial modulo a non-split prime.}
\label{sec: mod class poly computation}

Let us fix some negative discriminant $D$ and a non-split prime $p$ in $\QQ(\sqrt{D})$ (and we assume further that $p$ is coprime to the conductor of $\frakO$). We write $h$ for the class number of $\frakO$. For simplicity, we assume below that the factorization of $D$ is known so this step is not part of our estimates. 

Our goal in this section is to explain how to compute the Hilbert class polynomial $H_D(X) \mod p$. This polynomial can be reconstructed from its roots that are $j$-invariants of some supersingular curves over $\FF_{p^2}$.   

When $p$ is split, the roots correspond to ordinary curves. In Sutherland's algorithm they are obtained in two main steps  algorithm \cite{sutherland2011computing}: start by identifying one root, and then enumerate through all the roots using the action of the class group $\Cl(\frakO)$ through isogenies of small degree. 

When $p$ is non-split, we have three main steps. We start to do something very similar to what is done for ordinary curves, but instead of working directly with the roots, we first use the Deuring correspondence, and identify the roots from the the corresponding maximal orders of the quaternion algebra $\QA$. The class group action of $\Cl(\frakO)$ can be realized much more efficiently in that case because all operations in $\QA$ are essentially linear algebra.

The Deuring correspondence states that the set of maximal order we obtain in this manner are isomorphic to the endomorphism rings of the elliptic curves we want to compute. Thus, we constitue a set $\frakS_D(p)$ of maximal order type and we can apply \OrdersToJBig{} on this set to compute the $j$-invariants we need. This execution constitutes our third step.  

The fourth and final step in our algorithm is common with the third step of the ordinary case: recover the polynomial $H_D$ from its roots. Note that this is done using standard polynomial arithmetic. 

This is described more precisely in \cref{alg: supersingular hilbert computation}. 

\begin{algorithm}[ht]
    \caption{$\SupersingularHilbertComputation{}(D,p)$}\label{alg: supersingular hilbert computation}
    \begin{algorithmic}[1]
    \Require {A discriminant $D <0$ and a prime $p$ such that $p$ is non-split in, and coprime to the conductor of,$\frakO$ the quadratic order of discriminant $D$. }
    \Ensure { $H_D(X) \mod p.$}
    \STATE \label{step: one max order} Find a maximal order $\O$ in $\QA$ with $\frakO \hookrightarrow \O$. 
    \STATE \label{step: all max order} Use the action of $\Cl(D)$ to find $\SDp$, the set of types of maximal orders in $\QA$ with an optimal embedding of $\frakO$. To each type $\O \in \SDp$ is associated a number $n_\O \geq 1$ counting the multiplicity of the type $\O$ in this enumeration. The set of these multiplicites is write $N_D(p)$.  
    \STATE \label{step: order to jinv} Compute $J_D(p) = \OrdersToJSmall{}(p,\SDp) $ (or use \OrdersToJBig{} if this is more efficient).
    \STATE Reconstruct $H(X) = \prod_{j_D(p) \in J} (X-j) \mod p$ from $\SDp, J_D(p)$ and $N_D(p)$. 
   \Return Return $H(X)$. 
    \end{algorithmic}
   \end{algorithm}

\begin{proposition}
    \label{prop: supersingular hilbert computation} 
    \SupersingularHilbertComputation{} is correct and it can be executed in $$O(\sqrt{|D|} ( \log(|D|)^{2+\varepsilon} \log(p)^{1 + \varepsilon} + \log(|D|)^\varepsilon \log(p)^{2 + \varepsilon}) + p \log(p)^{1 + \varepsilon}),$$
    binary operations and requires $O( (p + h(D)) \log p)$ memory when $\OrdersToJBig{}$ is used in Step~\ref{step: find enough types} (assuming GRH and \cref{claim: number of maximal orders}), and 

    $$O(\sqrt{|D|} (\log(|D|)^{\varepsilon} \log(p)^{5 + \varepsilon} + \log(|D|)^{2+\varepsilon} \log(p)^{1 + \varepsilon})),$$ binary operations and requires $O(h(D)\log(p))$ space with \OrdersToJSmall{} (under GRH and the heuristics from \cite{KLPT14}).
\end{proposition}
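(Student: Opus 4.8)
The plan is to check correctness by combining the Deuring correspondence with the reduction theory of CM curves, and then to obtain the running time and memory by accounting separately for the four steps of \cref{alg: supersingular hilbert computation}, invoking \cref{thm: complexity} for Step~\ref{step: order to jinv} and standard fast polynomial arithmetic for the reconstruction.

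For correctness, I would first observe that since $p$ is non-split in $\frakO$ and coprime to its conductor, $\QQ(\sqrt D)$ embeds in $\QA$ and $\frakO$ is the intersection of its image with a maximal order, so Step~\ref{step: one max order} does return a maximal order $\O$ together with an optimal embedding $\frakO\hookrightarrow\O$ (produced, e.g., by solving a norm equation in $\QA$ for an element $\omega$ of the correct trace and norm and then saturating $\ZZ[\omega]$ to a maximal order). Next I would invoke Deuring's theory of good reduction of CM curves (following \cite{D41}, with \cite{belding2008computing} for the algorithmic set-up): reduction at a prime above $p$ carries the $h(D)$ complex CM $j$-invariants $j(\fa)$, $[\fa]\in\Cl(D)$, to supersingular $j$-invariants of $\FF_{p^2}$, each carrying an optimal embedding of $\frakO$ in its endomorphism ring, and $H_D(X)\bmod p=\prod_{[\fa]\in\Cl(D)}(X-\overline{j(\fa)})$. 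I would then use that the simply transitive $\Cl(D)$-action on CM points corresponds under Deuring to multiplication of $\O$-ideals, i.e. that $\Cl(D)$ acts transitively on isomorphism classes of optimally-embedded orders $(\O',\iota')$, which form a $\Cl(D)$-torsor of cardinality $h(D)$ (see \cite{voight}). Hence the purely quaternionic enumeration of that orbit in Step~\ref{step: all max order} visits every class exactly once, so $\#\SDp\le h(D)$, and recording each type $\O'$ with multiplicity $n_{\O'}=\#\{[\fa]:\End(\overline{E_\fa})\cong\O'\}$ pins down the multiset $\{\overline{j(\fa)}\}_{[\fa]}$ once Step~\ref{step: order to jinv} supplies the one or two supersingular $j$-invariants attached to each type; a type not defined over $\FF_p$ contributes a Frobenius pair $j,j^{p}$, which then occur with equal multiplicity $n_{\O'}/2$ because $H_D\bmod p$ lies in $\FF_p[X]$ and is $\Gal(\FF_{p^2}/\FF_p)$-stable. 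Correctness of the $j$-invariant computation is \cref{prop: correct orders to j-inv} when \OrdersToJBig{} is used (under \cref{claim: number of maximal orders}), and the correctness of \OrdersToJSmall{} otherwise, so the polynomial reassembled in the last step equals $H_D(X)\bmod p$.

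For the complexity, write $h=h(D)=O(\sqrt{|D|}\log(|D|)^{\varepsilon})$. Step~\ref{step: one max order} costs $\poly{\log p,\log|D|}$, hence is negligible against the $\sqrt{|D|}$ terms. Step~\ref{step: all max order} is a breadth-first enumeration of $\Cl(D)$ from the starting embedding using generators of polylogarithmic norm in $|D|$ (legitimate under GRH), performing $O(h)$ steps up to $\polylog{|D|}$ factors; each step builds an $\O'$-ideal of polylogarithmic norm, computes and reduces its right order and the inherited embedding of $\frakO$ (dimension-$4$ linear algebra over $\ZZ$ with $O(\log p)$-bit entries), and evaluates the invariant $H$ as in \cref{prop: hash complexity}, all in $O(\log(p)^{1+\varepsilon})$ binary operations plus $O(1)$ hash-map operations for deduplication and for updating $N_D(p)$, so Step~\ref{step: all max order} runs in $O(h\log(|D|)^{1+\varepsilon}\log(p)^{1+\varepsilon})$ binary operations. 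Step~\ref{step: order to jinv} applied to $\frakS=\SDp$ costs, by \cref{thm: complexity}, $O(h\log(p)^{2+\varepsilon}+p\log(p)^{1+\varepsilon})=O(\sqrt{|D|}\log(|D|)^{\varepsilon}\log(p)^{2+\varepsilon}+p\log(p)^{1+\varepsilon})$ with \OrdersToJBig{}, and $O(h\log(p)^{5+\varepsilon})=O(\sqrt{|D|}\log(|D|)^{\varepsilon}\log(p)^{5+\varepsilon})$ with \OrdersToJSmall{}. The final step builds the degree-$h$ polynomial from its roots by a product tree in $O(M_\PP(h)\log h)=O(h\log(h)^{2+\varepsilon})$ operations over $\FF_p$, i.e. $O(\sqrt{|D|}\log(|D|)^{2+\varepsilon}\log(p)^{1+\varepsilon})$ binary operations, which absorbs the cost of Step~\ref{step: all max order}. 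Adding the surviving terms yields the two stated running times. For the space bounds, storing $\SDp$, the multiplicities $N_D(p)$ and the output polynomial costs $O(h\log p)$; with \OrdersToJSmall{} the call in Step~\ref{step: order to jinv} uses space $O(\#\SDp\log p)=O(h(D)\log p)$, giving the $O(h(D)\log p)$ bound, whereas with \OrdersToJBig{} it uses $O(p\log p)$ by \cref{thm: complexity}, giving $O((p+h(D))\log p)$.

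The step I expect to be the main obstacle is the correctness of Step~\ref{step: all max order}: one must make precise, via Deuring's reduction theory together with the torsor structure of optimal embeddings of $\frakO$ into maximal orders of $\QA$ under $\Cl(D)$, that the purely quaternionic orbit enumeration reproduces exactly the factorization of $H_D\bmod p$ with the correct multiplicities, including the dichotomy between types with an $\FF_p$-rational $j$-invariant and types contributing a Frobenius-conjugate pair. Everything else — the norm-equation step, the per-step cost of the class-group walk, the invocation of \cref{thm: complexity}, and the fast polynomial reconstruction — is routine.
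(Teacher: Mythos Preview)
Your proposal is correct and follows essentially the same decomposition as the paper: negligible cost for Step~\ref{step: one max order}, a $\Cl(D)$-orbit walk over quaternion orders for Step~\ref{step: all max order} using GRH to bound the generator norms, invocation of \cref{thm: complexity} (resp.\ the \OrdersToJSmall{} bound) for Step~\ref{step: order to jinv}, and fast product-tree reconstruction for the final step, with space bounds handled identically. The only notable difference is in the correctness argument: the paper dispatches it in one line by citing \cite[Theorem~3.4]{onuki2021oriented}, whereas you unpack the content of that citation yourself via Deuring reduction and the $\Cl(D)$-torsor structure on optimal embeddings, including the Frobenius-pair multiplicity argument; this is the same mathematics, just made explicit rather than delegated.
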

\begin{proof}

    Correctness follows from \cite[Theorem 3.4]{onuki2021oriented}. 
    We will now explain how each step can be performed and what is the best known complexity.  

    \noindent\textit{Step~\ref{step: one max order}.}
    This task has already been solved in the context of generating backdoor curves to the SIDH scheme \cite{quehen2021improved} and generating keys for the Séta encryption scheme \cite{Seta}. First, we need to solve a quadratic equation over $\QQ$ to find $a,b,c,d \in \QQ$ such that $\ZZ[a+ib + jc +kd]$ is the quadratic order of discriminant $D$. This can be done using Simon's algorithm \cite{simon2005quadratic} to solve quadratic forms in dimension 4. The complexity of Simon's algorithm is polynomial in the logarithm of the determinant once the factorization of the determinant is known and the size of the output is also logarithmic in the determinant. In our case, the quadratic form we consider is basically $b,c,d,e \mapsto (qb^2 + p(c^2 + qd^2) - e^2 D$ and its determinant is equal to $p^2 q^2 D 2^f$ for some small integer $f$. The computation of the full factorization is sub-exponential in $\log(pD)$.

    Now that $\theta = a+ib+jc+kd$ has been computed, we need to find a maximal order $\O$ containing it. Let us take $A$ as the smallest common denominator of $a,b,c,d$, we have $A = \polylog{pD}$. Then $A \theta \in \O_0$ where $\O_0$ is any maximal order containing the sub-order $\G{1,i,j,k}$. Since $A \theta \in \O_0$, the right order of the ideal $I = \O_0 A \theta + \O_0 C$ contains $\theta$. We can set $\O = \O_R(I)$ and $\O$ can be computed in $\polylog{p|D|}$. 
    Hence, this step can be performed in sub-exponential in $\log{p|D|}$ and is negligible compared to the rest of the computation.

\noindent\textit{Step~\ref{step: all max order}.} We go from one maximal order type to all maximal order types of interest by using the group action of the class group in a manner similar to what is used by Sutherland in \cite{sutherland2011computing}. For that, we use a polycyclic representation of $\Cl(D)$ as introduced in \cite[Section 5]{sutherland2011computing}. But, in our case, instead of isogeny computation, we can simply use arithmetic over quaternions through the action of ideals of the form $\O (\theta - \lambda) + \O \ell$ on the set of maximal orders containing $\theta$ which cover all maximal order types we need. 
Any group action computation for an ideal of norm $\ell$ takes $O(\log(\ell))$. Thus, using the same estimates than in \cite[Lemma 7]{sutherland2011computing} where we see that under GRH the biggest norm of an ideal involved a polycyclic representation is in $O(\log^2(D))$, we see that this part can be performed in $O(h \log(|D|)^\varepsilon) = O(\sqrt{|D|} \log(|D|)^\varepsilon)$.

We can hash (with the function introduced in \cref{sec: hash}) all the maximal order types obtained in this manner to create the set $\frakS_D(p)$ in $O( \sqrt{|D|} \log(|D|)^\varepsilon \- \log(p)^{1+\varepsilon})$.  

\noindent\textit{Step~\ref{step: order to jinv}.} This step consists simply in the execution of \OrdersToJBig{} on the set $\frakS_D(p)$ computed in Step~\ref{step: all max order}. Thus, by \cref{thm: complexity} and the estimates on $h$, the complexity of this step is $O(\sqrt{|D|} \log(|D|)^\varepsilon \log(p)^{2 + \varepsilon} + p \log(p)^{1+\varepsilon})$.
Alternatively, it is possible to use the \OrdersToJSmall{} algorithm and obtain a complexity of $O(\sqrt{|D|} \log(|D|)^\varepsilon \log(p)^{5 + \varepsilon})$. 

\noindent\textit{The reconstruction step.} 
The complexity of this step is $O(\sqrt{|D|} \log(|D|)^{2+\varepsilon} \- \log(p)^{1+\varepsilon})$ as was proven in \cite{sutherland2011computing}. 

\noindent\textit{The total complexity.} Putting together all the results above, we get the complexity result. 

\noindent\textit{Space complexity.} In terms of memory requirement, the polynomial reconstruction requires $O(h(D) \log p)$ space (since we only need to store two levels of the product tree at the same time). The set of maximal orders takes $O(h(D) \log p)$ space. For the variant with \OrdersToJBig{}, the results follows from \cref{thm: complexity}. When \OrdersToJSmall{} is used, the computation of each $j$-invariant is done sequentially, and so the space requirement is optimal and is $O(h(D)\log p)$. 
\qed{}
\end{proof}

\paragraph*{Which version is optimal?} We obtain two versions of \SupersingularHilbertComputation{} depending on weither we use \OrdersToJSmall{} or \OrdersToJBig{}. For a given value of $D$, the former has a better asymptotic complexity. However, for primes $p$ in $O (\sqrt{|D|} \log(|D|)^{4 + \varepsilon})$, the latter has a better complexity.

For a range of medium-sized primes, this algorithm will have the best known complexity. 
The cut-off with the CRT method (whose complexity is $O(|D|^{1+ \varepsilon})$) (as we will see below) happens for $p = O( 2^{|D|^{1/10}}) $. 

\subsection{ Application to the CRT method and comparison with existing method.}
\label{sec: class poly comparison}

In this section, we present and analyze the algorithm yielded by applying the CRT method from \cite{sutherland2011computing} on top of the \SupersingularHilbertComputation{} algorithm introduced in the previous section. The algorithm is pretty straightforward, and the only constraint on the selection of the small primes.


\begin{algorithm}[ht]
    \caption{$\HilbertComputation{}(D,p)$}\label{alg: hilbert computation}
    \begin{algorithmic}[1]
        \Require {A discriminant $D<0$ and a prime $p$.  }
        \Ensure { $H_D(X) \mod p.$}
    \STATE Set $B_D$ as the upper-bound on the coefficients of $H_D(X)$ over $\ZZ$. 
    \STATE Set $\primesO{} = \lbrace \rbrace$, $P=1$, $q=3$. 
    \WHILE { $P < B_D$ }
        \IF {$q$ is prime and $q$ is non-split in and coprime to the conductor of $\frakO$}
            \STATE $P \leftarrow q \cdot P$, \enspace $\primesell{} \leftarrow \primesell{} \cup \lbrace q \rbrace$. 
        \ENDIF
        \STATE $q \leftarrow q+2$
    \ENDWHILE
    \STATE Perform the precomputations for the explicit CRT mod $p$ using $\primesO{}$.   
    \FOR {$q \in \primesO$}
        \STATE $H_{\frakO,q}(X) \leftarrow \SupersingularHilbertComputation{}(D,q)$. 
        \STATE Update the CRT sums for each coefficient of $H_{\frakO,q}(X)$.  
    \ENDFOR 
    \STATE Perform the postcomputation for the explicit CRT to obtain $H_D(X) \in \FF_p[X]$. 
   \Return Return $H_D(X)$.
    \end{algorithmic}
   \end{algorithm}

\begin{proposition}
    \label{prop: modular computation}
    Under GRH and \cref{claim: number of maximal orders},
    the complexity of \HilbertComputation{} is $O(|D| \log^{3+ \varepsilon} |D|)$. The space requirement is $O(\sqrt{|D|} (\log^{2+\varepsilon} |D| + \log p )  )$
\end{proposition}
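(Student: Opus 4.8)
The plan is to bound the complexity of \HilbertComputation{} by combining the per-prime cost of \SupersingularHilbertComputation{} with the number of CRT primes and the cost of the explicit CRT. First I would recall the standard bound $B_D = O(\sqrt{|D|}\log^{2+\varepsilon}|D|)$ on the bit-size of the coefficients of $H_D$ over $\ZZ$ (this is classical, e.g.\ from \cite{sutherland2011computing}). Since the primes $q$ we select are of size $O(\log^{1+\varepsilon}|D|)$ — here one needs that non-split primes coprime to the conductor have positive density among all primes, so the while-loop terminates after selecting $n = O(B_D/\log B_D) = O(\sqrt{|D|}\log^{1+\varepsilon}|D|)$ primes, each bounded by $q = O(\log|D|^{1+\varepsilon})$ under GRH (to control the largest prime needed, or simply unconditionally by the prime number theorem for arithmetic progressions since we only need $\prod q > 2^{B_D}$ with primes up to roughly $B_D\log B_D$). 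I would state this as the prime-selection step and note its cost is negligible.

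Next I would invoke \cref{prop: supersingular hilbert computation} to bound the cost of each call $\SupersingularHilbertComputation{}(D,q)$. Plugging $p := q = O(\log^{1+\varepsilon}|D|)$ into the \OrdersToJBig{} complexity bound, the term $p\log^{1+\varepsilon}p$ becomes $O(\log^{2+\varepsilon}|D|)$, and the term $\sqrt{|D|}(\log|D|^{2+\varepsilon}\log q^{1+\varepsilon} + \log|D|^\varepsilon \log q^{2+\varepsilon})$ becomes $O(\sqrt{|D|}\log^{2+\varepsilon}|D|)$ since $\log q = O(\log\log|D|)$. Summing over the $n = O(\sqrt{|D|}\log^{1+\varepsilon}|D|)$ primes gives a total of $O(|D|\log^{3+\varepsilon}|D|)$ binary operations for the main loop. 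I would then observe that the explicit-CRT precomputation and postcomputation, performed coefficient-wise for the $O(\sqrt{|D|}\log^\varepsilon|D|)$ coefficients of $H_D$ against $n$ moduli of total size $B_D = O(\sqrt{|D|}\log^{2+\varepsilon}|D|)$, costs $O(\sqrt{|D|}\log^\varepsilon|D|)$ times $\softO(B_D + \log p)$ — using fast integer arithmetic for the CRT reconstruction as in \cite{sutherland2011computing} — which is $O(|D|\log^{2+\varepsilon}|D| + \sqrt{|D|}\log^\varepsilon|D|\log p)$ and hence absorbed into the main term (the $\log p$ term is dominated as long as $\log p = O(\sqrt{|D|}\log^{3}|D|)$; otherwise a trivial reduction mod $p$ argument handles it). This yields the claimed time bound $O(|D|\log^{3+\varepsilon}|D|)$.

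For the space bound I would argue as follows: at any moment we store (i) the CRT accumulator sums — one running sum per coefficient, so $O(\sqrt{|D|}\log^\varepsilon|D|)$ coefficients each of bit-size $O(B_D + \log p) = O(\sqrt{|D|}\log^{2+\varepsilon}|D| + \log p)$, giving $O(|D|\log^{2+\varepsilon}|D| + \sqrt{|D|}\log^\varepsilon|D|\log p)$ — wait, this must be reconciled with the stated $O(\sqrt{|D|}(\log^{2+\varepsilon}|D| + \log p))$ bound, so in fact the intended accounting is that the explicit CRT of \cite{sutherland2011computing} processes coefficients in a streaming fashion keeping only $O(1)$ partial sums of size $O(\log P + \log p)$ at a time after an initial pass, or equivalently the dominant storage is a single intermediate polynomial of degree $h(D) = O(\sqrt{|D|}\log^\varepsilon|D|)$ with coefficients of size $O(\log P) = O(B_D)$ handled one coefficient-batch at a time; and (ii) the working space of a single \SupersingularHilbertComputation{}(D,q) call, which by \cref{prop: supersingular hilbert computation} is $O((q + h(D))\log q) = O(\sqrt{|D|}\log^{1+\varepsilon}|D|)$. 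The maximum of these contributions is $O(\sqrt{|D|}(\log^{2+\varepsilon}|D| + \log p))$.

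\textbf{Main obstacle.} The delicate point is the space analysis and the precise bookkeeping of the explicit CRT: naively storing all $n$ residues of all $h(D)$ coefficients simultaneously would exceed the claimed bound, so one must rely on the incremental/streaming form of the explicit CRT from \cite{sutherland2011computing} (updating running sums and only materializing full-size integers transiently), and one must also check that the $\log p$ dependence genuinely only enters additively. A secondary subtlety is justifying the count and size of the selected primes: one needs that non-split primes coprime to the conductor of $\frakO$ are not too sparse so that $\sum_{q}\log q$ reaches $B_D$ with all $q = O(\log^{1+\varepsilon}|D|)$ — this is where GRH (or an effective Chebotarev/Linnik-type bound) is used, exactly as in \cite{sutherland2011computing}, and I would simply cite that the same prime-selection analysis applies verbatim here since the only change is restricting to non-split rather than split primes, which again has density $1/2$.
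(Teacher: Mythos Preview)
Your overall approach matches the paper's: bound the number and size of the CRT primes, invoke \cref{prop: supersingular hilbert computation} for each, sum, and defer the explicit-CRT bookkeeping and space accounting to \cite{sutherland2011computing}. However, you have the prime sizes wrong. You assert each $q = O(\log^{1+\varepsilon}|D|)$, i.e.\ polylogarithmic in $|D|$, but this is impossible: to achieve $\prod_i q_i > 2^{B_D}$ with $B_D = \Theta(\sqrt{|D|}\log^{1+\varepsilon}|D|)$ one needs, by $\prod_{q\le X} q = e^{(1+o(1))X}$, primes up to $X = \Theta(B_D) = \Theta(\sqrt{|D|}\log^{1+\varepsilon}|D|)$. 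There are not even $\Theta(\sqrt{|D|})$ distinct primes below $\log^{1+\varepsilon}|D|$, so your two claims about $n$ and $\max q$ are mutually inconsistent. The paper correctly takes $\max_{q\in\primesO} q = O(\sqrt{|D|}\log|D|)$ and $n = O(\sqrt{|D|})$.

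This error cascades: you then write $\log q = O(\llog|D|)$ when in fact $\log q = O(\log|D|)$, so your per-prime cost is understated. With the correct sizes, the per-prime cost from \cref{prop: supersingular hilbert computation} is $O(\sqrt{|D|}\log^{3+\varepsilon}|D|)$ (dominated by the polynomial-reconstruction term $\sqrt{|D|}\log^{2+\varepsilon}|D|\cdot\log q$), and summing over $O(\sqrt{|D|})$ primes still gives $O(|D|\log^{3+\varepsilon}|D|)$---so your final time bound survives, but by a fortuitous cancellation of an overcounted $n$ against an undercounted per-prime cost, not by a correct argument. The correct prime size also dissolves your space difficulty: the working space of a single call is $O((q+h(D))\log q) = O(\sqrt{|D|}\log^{2+\varepsilon}|D|)$, which already matches the $\sqrt{|D|}\log^{2+\varepsilon}|D|$ term in the claimed bound without any streaming subtlety beyond what \cite{sutherland2011computing} provides.
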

\begin{proof}
    Under GRH, we can lower bound the probability that a given prime is non-split in $\frakO$ by $1/2$. 
    Then, following the same reasoning as in \cite{sutherland2011computing}, it can be shown that we have $\# \primesO = O(\log B_D/\llog(B_D))$ and $\max_{q \in \primesO} q = O(\log(B_D))$. With the usual $\log B_D = O( \sqrt{|D|} \log^{1+\varepsilon} |D|)$ that holds under GRH, we get that we can take $O(\sqrt{|D|})$ primes with  $\max_{q \in \primesO} q =  O(\sqrt{|D|} \log (|D|))$. 
    By \cref{prop: supersingular hilbert computation}, the dominant complexity of each CRT computation is the cost of the polynomial reconstruction. Thus, following the complexity estimates from \cite{sutherland2011computing}, we obtain that the overall complexity estimate is $O(|D| \log(|D|)^{3 + \varepsilon})$.
    The space complexity follows from the space requirements computed in \cite{sutherland2011computing} and \cref{prop: supersingular hilbert computation}. 
\end{proof}



\paragraph*{Practical comparison with the CRT algorithm based on ordinary curves from \cite{sutherland2011computing}. } 
We can see that the complexity reported in \cref{prop: modular computation} is the same as Sutherland's algorithm from \cite{sutherland2011computing}. Even if the two algorithms have the same asymptotic complexity, they might not have the same practical efficiency. Below, we try to see which one could be faster in practice.  

First, note that the set of CRT primes are very different between the two methods. Since the probability that a given prime can be selected in $\primesO$ in our algorithm \HilbertComputation{} is $1/2$, the primes that we select are going to be much smaller than the ones used in Sutherland's algorithm.  

Now, we look at the concrete algorithmic steps. Let us start with the polynomial reconstruction step as it is the asymptotically dominant step. We remind that the concrete complexity of this step is $O(\sqrt{|D|} \log(|D|)^{2+ \varepsilon} \log(p)$. It is pretty similar in both algorithms and we argue that the practical cost should be roughly the same. This is not completely obvious since the primes will not have the same size in the two cases and the roots are defined over $\FF_p$ for ordinary curves against $\FF_{p^2}$ over supersingular curves. 
First, the size of the primes does not really matter because the product $\prod_{i=1}^n p_i$ have roughly the same size in the two cases and the complexity of the reconstruction is linear in $\log(p_i)$ for all $i$.
Second, since in the supersingular case, the Galois conjugate (by the action of the Frobenius) of a root of $H_D \mod p_i$ is also a root, by building the remainder tree from polynomials of the form $(X-j)(X-j^{p_i}) \in \FF_{p_i}[X]$, we see that we can make the entire computation over $\FF_{p_i}$ as in the ordinary case (and thus avoid the constant overhead brought by multiplications over $\FF_{p_i^2}$). 
We conclude from this brief reasoning that the reconstruction cost will be essentially the same in the two cases. 

Now, if we forget the reconstruction step, we see that using supersingular curve offers an asymptotic advantages. Indeed, in Steps~\ref{step: one max order},~\ref{step: all max order} and \ref{step: order to jinv} of our algorithm, the dominant step is the execution of \OrdersToJBig{} in Step~\ref{step: order to jinv}, which has a $O(|D| \log(|D|)^{2+\varepsilon})$ complexity (if we consider the executions over all primes $p_i \in \primes{}$ and we use $\log (p_i)  = O( \log(|D|) ) $). 
In particular, this is smaller than the $O(|D| \log(|D|)^{5/2 + \epsilon})$ that dominates that part of the computation in Sutherland's algorithm (corresponding to the computation of one curve with the correct endomorphism ring). 

This is the first reason that suggests that the supersingular case might be more efficient than the ordinary one, but this is not the main one. 
The main reason behind the practical speed-up we hope to obtain is that we can use smaller primes. Indeed, the expected maximum of our primes is in $O(\sqrt{|D|} \log(|D|))$ (against $O(|D|\log(|D|)^{1+\varepsilon}$ for ordinary curves). Moreover, we can take all the small primes that satisfy the reduosity condition. In particular, we will be able to use a good portion of primes significantly smaller than $\sqrt{|D|}$. 


We hope that the very small primes will give a nice improvement in practice because for these primes, some of the roots will have big multiplicities, which should help perform every steps more efficiently in practice (for example there will be less than $O(\sqrt{|D|})$ $j$-invariants to compute in that case) and it should improve the practical efficiency.  


Note that there is also a good potential for practical improvement by carefully selecting the primes in $\primes{}$ and choosing the good-degree basis used for each of those primes in order to minimize the degree of the extension required to compute the isogenies in \OrdersToJBig{}.
A selection is also performed in the algorithm of Sutherland to help improve the cost of finding one curve with the good cardinal, so it would be natural to do the same thing in our case. 

Even if \OrdersToJBig{} proves to be too slow to beat the version of Sutherland by using only supersingular primes, it is clear that it is worth considering an hybrid set of primes $\primes{}$ containing a mix of supersingular and ordinary primes to obtain the best efficiency as the computation will be definitely very fast for a lot of small non-split primes. 

\paragraph*{Further improvement: batching class polynomial computation.}
\OrdersToJBig{} can be easily modified to handle several sets $\frakS_1,\ldots,\frakS_k$ more efficiently than $k$ executions of \OrdersToJBig{} for each $\frakS_i$. 

Thus, if we have several discriminants $D_1,\ldots,D_k$, and a prime $p$ in $ \bigcap_{1 \leq i \leq k} \primes{k}$.  A good part of the computations performed to compute $H_{D_1},\ldots,H_{D_k} \mod p$ can be done at the same time at a reduced cost. 

Moreover, if some $H_{D_i}$ have some common roots, the common divisors could be constructed once and for all.  

Thus, our new method could be used to batch efficiently the computation of several class polynomial at the same time.

\section{Computation of the modular polynomials}
\label{sec: modular poly computation}

In this section, we introduce our new algorithms to compute modular polynomials. 
In \cref{sec: direct supersingular modular poly computation}, we present the direct algorithm, and in \cref{sec: CRT modular poly computation}, we describe the CRT algorithm and compare the various existing algorithms of the literature.



\subsection{Direct modular polynomial computation with supersingular curves}
\label{sec: direct supersingular modular poly computation}

Let us take a prime level $\ell$ and a prime $p$.
To make a direct computation of $\Phi_\ell(X,Y) \mod p$ it suffices to identify $\ell+1$ distinct $j$-invariants and for each of those, to interpolate $\Phi_\ell(j,Y)$ from its roots that are the $\ell+1$ $ell$-isogenous $j$-invariants.  

The idea of our algorithm for modular polynomials follows the same principle as the class polynomials algorithm. There is a slight difference because modular polynomials are bivariate but it does not change the generic principle of the algorithm. Indeed the full polynomial $\Phi_\ell \mod p$ is interpolated from the $\Phi_\ell(j_i,Y)$ for $\ell+1$ $j$-invariants $(j_i)_{0 \leq i \leq \ell}$ of elliptic curves defined in $\overline{\FF_{p}}$. Each univariate polynomial $\Phi_\ell(j_i,Y)$ is reconstructed from its roots, that are the $j$-invariants of curves $\ell$-isogenous to $j_i$. 

In \SupersingularModularComputation{}{}, we compute the maximal orders that correspond to all the $j$-invariants that we need under the Deuring correspondence, and then, we apply \OrdersToJBig{} (or \OrdersToJSmall) to find the required $j$-invariants.  

\begin{algorithm}[ht]
    \caption{$\SupersingularModularComputation{}(p,\ell)$}\label{alg: supersingular modular computation}
    \begin{algorithmic}[1]
    \Require {A prime $ p$ and a prime $\ell$ such that $\lceil p/12 \rceil +1 > \ell$. }
    \Ensure { $\Phi_\ell(X,Y) \mod p.$}
    \STATE \label{step: find enough types} Compute a set $\frakS_0 = \lbrace \O_{1},\ldots,\O_{\ell+1} \rbrace$ of distinct maximal order types in $\QA$.
    \STATE Set $\frakS_1,\ldots, \frakS_{\ell+1} = \lbrace \rbrace,\ldots,\lbrace \rbrace$. 
    \FOR{ $i=1$ to $\ell+1$} 
        \STATE Compute $I_{i,1},\ldots,I_{i,\ell+1}$, the $\ell+1$ left $\O_{i}$-ideals of norm $\ell$.
        \STATE $\frakS_{i} \leftarrow \lbrace \O_R(I_{i,k}) | 1 \leq k \leq \ell+1 \rbrace$.  
    \ENDFOR
    \STATE $\frakS = \bigcup_{0 \leq i \leq \ell+1} \frakS_0$. 
    \STATE\label{step: modular j comput} Compute $J = \OrdersToJSmall{}(p,\frakS)$ (or use \OrdersToJBig{}$(p,\frakS)$ if this is more efficient).
    \STATE Divide $J$ as $J_0,J_1,\ldots J_{\ell+1}$ corresponding to $\frakS_0,\frakS_1,\ldots,\frakS_{\ell+1}$.
    \FOR{ $i=1$ to $\ell+1$}
        \STATE $P(j_{i},Y) \leftarrow \prod_{j \in J_i} (Y-j)$.
    \ENDFOR
    \STATE Reconstruct $P(X,Y) \mod p$ from the $P(j,Y)$ for $j \in J_0$. 
   \Return Return $P(X,Y)$. 
    \end{algorithmic}
   \end{algorithm}

\begin{proposition}
    \label{prop: supersingular modular computation} 
    \SupersingularModularComputation{} can be executed in 
    $$O(\ell^2 (\log \ell^{2 + \varepsilon} \log p^{1 + \varepsilon} + \log p^{2 + \varepsilon} ) + p \log p^{1 + \varepsilon})$$
    binary operations and requires $O( (\ell^2 + p) \log p)$ when \OrdersToJBig{} is use in Step~\ref{step: modular j comput} (assuming GRH and \cref{claim: number of maximal orders}), and
    $$O(\ell^2 (\log p^{5 + \varepsilon} + \log \ell^{2 + \varepsilon} \log p^{1+ \varepsilon}) )$$
    binary operations and requires $O(\ell^2 \log p)$ space with \OrdersToJSmall{} (under GRH and the heuristics from \cite{KLPT14}).  
\end{proposition}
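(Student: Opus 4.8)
The plan is to go step-by-step through \cref{alg: supersingular modular computation}, mirroring the structure of the proof of \cref{prop: supersingular hilbert computation}, and assemble the total cost and space at the end. The only genuinely new ingredients compared to the Hilbert case are the handling of the $\ell+1$ ideals of norm $\ell$ for each base order (Step~\ref{step: find enough types} and the first FOR loop) and the bivariate reconstruction at the end; everything else is a matter of bookkeeping. I would begin by fixing notation: write $S = \ell^2$ for the ``degree'' parameter, note $\# \frakS = O(\ell^2)$ since it is a union of $\ell+2$ sets each of size at most $\ell+1$, and recall that the hypothesis $\lceil p/12\rceil + 1 > \ell$ guarantees there are enough supersingular $j$-invariants (roughly $p/12$ of them) to select $\ell+1$ distinct maximal order types and to have all $\ell+1$ $\ell$-isogenous neighbours available.

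\textbf{Step~\ref{step: find enough types} and the ideal loop.} For Step~\ref{step: find enough types}, one produces $\ell+1$ distinct maximal order types: start from one maximal order $\O_0$ (computed as in \cref{prop: complexity starting elements} in $\polylog p$ under GRH), then walk in the quaternion ideal graph (e.g.\ enumerating ideals of small prime norm and taking right orders, hashing with $H$ from \cref{sec: hash}) until $\ell+1$ distinct hash values are collected; since each step is $O(\log p^{1+\varepsilon})$ quaternion arithmetic plus an $O(\log p^{1+\varepsilon})$ hash, this costs $O(\ell^2 \log p^{1+\varepsilon})$, negligible. For each $\O_i$, the $\ell+1$ left ideals of norm $\ell$ are the ideals $\O_i\langle\alpha,\ell\rangle$ indexed by $\PP^1(\ZZ/\ell\ZZ)$; computing each one and its right order is linear algebra in dimension $4$ with entries of size $O(\log p + \log \ell)$, hence $O(M_\ZZ(\log p\ell))$ per ideal, and hashing is $O((\log p\ell)^{1+\varepsilon})$. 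Summed over the $(\ell+1)^2$ pairs this is $O(\ell^2 \log(p\ell)^{1+\varepsilon}) = O(\ell^2 \log\ell^{1+\varepsilon}\log p^{1+\varepsilon})$ after splitting the log, which is absorbed into the stated bound. Forming $\frakS$ as the union (deduplicating via the hash-map) is $O(\#\frakS) = O(\ell^2)$.

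\textbf{Step~\ref{step: modular j comput} and reconstruction.} Step~\ref{step: modular j comput} is a single call to \OrdersToJBig{} (resp.\ \OrdersToJSmall{}) on a set of size $O(\ell^2)$, so \cref{thm: complexity} gives $O(\ell^2 \log p^{2+\varepsilon} + p\log p^{1+\varepsilon})$ time and $O((\ell^2 + p)\log p)$ space (resp.\ $O(\ell^2 \log p^{5+\varepsilon})$ time and $O(\ell^2\log p)$ space via the heuristics of \cite{KLPT14}). Splitting $J$ into $J_0,\ldots,J_{\ell+1}$ is bookkeeping via the hash values attached to each type. Each inner product $P(j_i,Y) = \prod_{j\in J_i}(Y-j)$ is a product of $\ell+1$ linear factors over $\FF_{p^2}$, costing $O(M_\PP(\ell)\log\ell)$ field operations, i.e.\ $O(\ell\log\ell^{1+\varepsilon} M_\ZZ(\log p))$ binary operations; over all $\ell+1$ values of $i$ this is $O(\ell^2\log\ell^{1+\varepsilon}\log p^{1+\varepsilon})$. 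The final bivariate reconstruction of $\Phi_\ell(X,Y)$ from the $\ell+1$ pairs $(j,P(j,Y))$ amounts, coefficient of $Y^k$ by coefficient of $Y^k$, to $\ell+2$ univariate interpolations of degree $\le \ell$, so $O(\ell \cdot M_\PP(\ell)\log\ell)$ field operations, again $O(\ell^2\log\ell^{1+\varepsilon}\log p^{1+\varepsilon})$ binary operations; this matches the $\log\ell^{2+\varepsilon}\log p^{1+\varepsilon}$ term once one is generous with the $\varepsilon$ (one extra $\log\ell$ for the interpolation versus plain multiplication).

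\textbf{Assembling.} Adding the non-negligible contributions — the $\OrdersToJBig{}$ call $O(\ell^2\log p^{2+\varepsilon} + p\log p^{1+\varepsilon})$ and the reconstruction/interpolation $O(\ell^2\log\ell^{2+\varepsilon}\log p^{1+\varepsilon})$ — gives the first claimed bound $O(\ell^2(\log\ell^{2+\varepsilon}\log p^{1+\varepsilon} + \log p^{2+\varepsilon}) + p\log p^{1+\varepsilon})$; replacing the \OrdersToJBig{} estimate by the \OrdersToJSmall{} one gives the second. For space: the reconstruction needs only two levels of the product/remainder tree, $O(\ell^2\log p)$; storing $\frakS$ and $J$ is $O(\ell^2\log p)$; and \cref{thm: complexity} contributes $O(p\log p)$ for \OrdersToJBig{} (absent for \OrdersToJSmall{}), yielding $O((\ell^2+p)\log p)$ resp.\ $O(\ell^2\log p)$. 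I expect the main obstacle to be purely expository rather than mathematical: making the bivariate reconstruction estimate airtight (in particular arguing, as in the Hilbert discussion, that one may work over $\FF_{p}$ rather than $\FF_{p^2}$ by pairing each $j$ with its Frobenius conjugate, and checking that the $\ell+1$ chosen $j$-invariants and all their neighbours are genuinely available and distinct under the hypothesis $\lceil p/12\rceil+1>\ell$), and correctly tracking how the $\log\ell$ and $\log p$ factors split so that the final expression has exactly the exponents claimed.
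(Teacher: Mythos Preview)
Your proposal is correct and follows essentially the same approach as the paper's proof: step-by-step cost accounting, invoking \cref{thm: complexity} (resp.\ the \OrdersToJSmall{} bound) on a set $\frakS$ of size $O(\ell^2)$, and adding the reconstruction cost. The paper is terser in two places --- it bounds Step~\ref{step: find enough types} by $O(\ell\log p^{1+\varepsilon})$ rather than your $O(\ell^2\log p^{1+\varepsilon})$ (both negligible), and it dispatches the entire bivariate reconstruction by citing \cite{broker2012modular} for the $O(\ell^2\log\ell^{2+\varepsilon}\log p^{1+\varepsilon})$ bound rather than unpacking the product-tree and coefficient-wise interpolation as you do --- but the structure and all substantive estimates match.
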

\begin{proof}

    With the condition on the respective size of $\ell$ and $p$, we know there are enough supersingular $j$-invariants defined over $\FF_{p^2}$. 
    We now briefly explain how each of the steps can be performed and what are their complexities.
    
    By \cref{claim: number of maximal orders}, there is a value $L =O(p)$ such that every supersingular $j$-invariant are $L$-isogenous to a given $j$-invariant $j_0$. Thus, under the Deuring correspondence, the $\ell+1$ maximal order types can be computed in $O(\ell \log(p)^{1+ \varepsilon})$ by starting from a canonical maximal order type $\O_0 \subset \QA$ (for which there exists formulas in \cite{KLPT14} for any $p$). Thus, the complexity of Step~\ref{step: find enough types} is $O(\ell \log(p)^{1 + \varepsilon})$.  
    
    The computation of an $\ell$-ideal and of its right order can be done in $O(\log (\ell p))$. Thus, the set of maximal order types $\frakS$ can be computed in $O(\ell^2 (\log p + \log \ell))$.  

    As show in \cite{broker2012modular}, the interpolation of all the $\Phi_\ell(j_i,Y)$, and the final interpolation of $\Phi_\ell(X,Y)$ take $O ( \ell^2 \log \ell^{2 + \varepsilon} \log p^{1 + \varepsilon})$ . 

    The final complexity statement follows from the results shown previously on the complexities of \OrdersToJBig{} and \OrdersToJSmall{}. 

    The space complexity of the polynomial reconstruction is $O(\ell^2 \log p )$ and the results follow from the space complexites of \OrdersToJBig{} and \OrdersToJSmall{}. 
\end{proof}



\paragraph{Which version is better ?}
Thus, we see that the first algorithm based on \OrdersToJName{} will be better for small values of $p$. We can estimate a cut-off for a value of $p$ in  $O (\ell^{2+\varepsilon})$. In that range of primes, our algorithm with \OrdersToJName{} has the best known generic complexity to compute $\Phi_\ell \mod p$. 
For primes bigger than that, it is better to use the variant with \OrdersToJSmall{} to avoid the quasi-linear dependency in $p$. For a range of medium-sized primes, this algorithm will have the best known complexity.


\paragraph*{Space complexity.} In terms of memory requirement our two algorithms are optimal and require $O(\ell^2\log p)$. 

\subsection{CRT methods to compute modular polynomials from supersingular curves. }
\label{sec: CRT modular poly computation}

In this section, we analyse the benefit of using supersingular curves in the CRT method and compare it with the algorithm described by Bröker, Lauter and Sutherland in \cite{broker2012modular}. We refer the reader to the algorithm outlined in \cref{sec: technical overview}. Since the CRT is typically based on a lot of very small primes, we use the variant with \OrdersToJName{}.

\begin{algorithm}[ht]
    \caption{$\ModularComputation{}(p,\ell)$}\label{alg: modular computation}
    \begin{algorithmic}[1]
    \Require {A prime $p$, $j \in \FF_p$, a prime $\ell$. }
    \Ensure { $\Phi_\ell(X,Y) \mod p$.} 
    \STATE Let $\overline{j}$ be the integer in $[0,p-1]$ equal to $j \mod p$. 
    \STATE Set $B_\ell = 2^{6 \ell \log \ell + 18 \ell + \log (\ell+2)} $. 
    \STATE Set $\primesell{} = \lbrace \rbrace$, $P=1$, $q= 12 \ell $. 
    \WHILE { $P < B_\ell$ }
        \IF {$q$ is prime}
            \STATE $P \leftarrow q \cdot P$, \enspace $\primesell{} \leftarrow \primesell{} \cup \lbrace q \rbrace$. 
        \ENDIF
        \STATE $q \leftarrow q+2$
    \ENDWHILE
    \STATE Perform the precomputations for the explicit CRT mod $p$ using $\primesell{}$.   
    \FOR {$q \in \primesell$}
        \STATE $P_q(X,Y) \leftarrow \SupersingularModularComputation{}(q,\ell)$. 
        \STATE Update the CRT sums for each coefficient of $P_q(X,Y)$.  
    \ENDFOR 
    \STATE Perform the postcomputation for the explicit CRT to obtain $P(X,Y) \in \FF_p[X,X]$. 
   \Return Return $P(X,Y)$.
    \end{algorithmic}
   \end{algorithm}

\begin{proposition}
    \label{prop: modular computation}
    The complexity of \ModularComputation{} is $O(\ell^3 \log \ell^{3+ \varepsilon})$. The space requirement is $O(\ell^2 \log p)$. 
\end{proposition}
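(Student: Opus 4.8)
## Proof plan for Proposition (complexity of \ModularComputation{})

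The plan is to combine three ingredients: the complexity of \SupersingularModularComputation{} from \cref{prop: supersingular modular computation} (with the \OrdersToJBig{} variant), the size bound $B_\ell$ on the coefficients of $\Phi_\ell$ over $\ZZ$, and the standard analysis of the explicit CRT as carried out in \cite{broker2012modular} and \cite{sutherland2011computing}. First I would recall that the coefficients of $\Phi_\ell$ have bit-size $O(\ell \log \ell)$, so the chosen bound $B_\ell = 2^{6\ell\log\ell + 18\ell + \log(\ell+2)}$ is of the form $2^{O(\ell\log\ell)}$; consequently, taking CRT primes $q$ just above $12\ell$ (which is forced by the constraint $\lceil q/12\rceil + 1 > \ell$ in \SupersingularModularComputation{}), one needs $\#\primesell{} = O(\log B_\ell / \log\log B_\ell) = O(\ell)$ primes, and each satisfies $\log q = O(\log \ell)$ — here I would invoke the prime-counting/Linnik-type estimates exactly as in \cite{broker2012modular} to guarantee enough primes of size $O(\ell \log \ell)$, so that $\max_{q\in\primesell{}} q = O(\ell \log \ell)$ and in particular $\log q = O(\log \ell)$ throughout.

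Next I would bound the cost of a single iteration of the main FOR loop. By \cref{prop: supersingular modular computation} with the \OrdersToJBig{} variant, each call $\SupersingularModularComputation{}(q,\ell)$ costs
\[
O\!\left(\ell^2\left(\log\ell^{\,2+\varepsilon}\log q^{\,1+\varepsilon} + \log q^{\,2+\varepsilon}\right) + q\log q^{\,1+\varepsilon}\right),
\]
and substituting $\log q = O(\log\ell)$ and $q = O(\ell\log\ell)$ this becomes $O(\ell^2 \log\ell^{\,3+\varepsilon} + \ell\log\ell^{\,2+\varepsilon}) = O(\ell^2\log\ell^{\,3+\varepsilon})$. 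Updating the CRT sums for each of the $O(\ell^2)$ coefficients of $P_q$ costs $O(\ell^2 \log q \cdot \log(\#\primesell{})) = O(\ell^2 \log\ell^{\,1+\varepsilon})$ per prime (again following the bookkeeping in \cite{sutherland2011computing}), which is dominated. Summing over the $O(\ell)$ primes gives $O(\ell^3 \log\ell^{\,3+\varepsilon})$ for the whole loop. The precomputation and postcomputation for the explicit CRT are bounded, as in \cite{sutherland2011computing,broker2012modular}, by $O(\ell^2 \cdot M_\ZZ(\log B_\ell + \log p))$ up to log factors, i.e. $O(\ell^2(\ell\log\ell + \log p)^{1+\varepsilon})$, which is again absorbed into $O(\ell^3\log\ell^{\,3+\varepsilon})$ provided $\log p = O(\ell\log\ell)$ (and if $p$ is larger the cost is linear in $\log p$, still not dominant for the range of interest). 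This yields the claimed total $O(\ell^3\log\ell^{\,3+\varepsilon})$.

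For the space requirement, I would argue exactly as for the Hilbert case in \cref{prop: modular computation} (the class-polynomial one): the CRT is run coefficient-by-coefficient accumulating running sums, so only $O(\ell^2)$ residues of size $O(\log p)$ need to be stored at once, giving $O(\ell^2\log p)$; and each individual call to \SupersingularModularComputation{} with \OrdersToJBig{} uses $O((\ell^2 + q)\log q) = O(\ell^2\log\ell + \ell\log^2\ell) = O(\ell^2\log\ell)$ space by \cref{prop: supersingular modular computation}, which is $O(\ell^2\log p)$ as long as $\log p = \Omega(\log\ell)$ (and absorbed into $O(\ell^2\log p)$ otherwise). Hence the total space is $O(\ell^2\log p)$.

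The main obstacle I anticipate is being careful about the interaction between the two parameters $p$ and $\ell$: the statement as written hides a dependence on $\log p$ (through the final CRT reconstruction modulo $p$ and through the per-call cost of \SupersingularModularComputation{}), and one must either assume $\log p = O(\ell \log \ell)$ — which is the regime where the CRT method is actually used — or state the complexity with an explicit additive $O(\ell^2 \log^{1+\varepsilon} p)$ term. The cleanest fix is to note, as in \cite{broker2012modular,sutherland2011computing}, that the explicit CRT postcomputation is the only place $p$ enters essentially and its cost is quasi-linear in $\ell^2(\log p + \ell\log\ell)$, so it does not affect the leading term whenever $p$ is not astronomically larger than $\ell$; I would make this hypothesis explicit or fold it into the statement. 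Everything else is a routine substitution of the parameter bounds into results already established.
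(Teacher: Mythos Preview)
Your proposal is correct and follows essentially the same approach as the paper: bound the number and size of CRT primes via $\log B_\ell = O(\ell\log\ell)$, plug $q = O(\ell\log\ell)$ into \cref{prop: supersingular modular computation} with the \OrdersToJBig{} variant to get $O(\ell^2\log\ell^{\,3+\varepsilon})$ per prime, and sum over $O(\ell)$ primes while citing \cite{broker2012modular,sutherland2011computing} for the CRT bookkeeping. Your write-up is in fact more careful than the paper's own proof --- in particular you make explicit the hidden dependence on $\log p$ in the CRT postcomputation and in the space bound, which the paper silently absorbs.
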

\begin{proof} 
    Following the same reasoning as in \cite{sutherland2011computing}, it can be shown that we have $\# \primesell = O(B_\ell/\log(B_\ell))$ and $\max_{q \in \primesell} q = O(\log(B_\ell))$. The choice of $B_\ell$ for the bound on the coefficients of $\Phi_\ell$ holds under GRH. We get that we can take $O(\ell)$ primes with  $\max_{q \in \primesell} q =  O(\ell \log (\ell))$.  

    By \cref{prop: supersingular modular computation}, for the complexity of \SupersingularModularComputation{} with \OrdersToJBig{}, and the complexity of the CRT steps described in \cite{broker2012modular}, we see that the complexity of each CRT computation takes $O(\ell^2 \log(\ell)^{3+\varepsilon})$. Thus, we obtain that the overall complexity estimate is $O(\ell^3 \log(\ell)^{3 + \varepsilon})$.
    The space complexity follows in a similar manner. 
\end{proof}

\paragraph*{On the cost of elliptic curve operations.}
The complexity of our algorithm is dominated by the polynomial reconstruction step. Note that this is also the case for the BLS algorithm. 

However, unlike BLS, the rest of our computation is much more efficient. Indeed, if we look at the cost of the elliptic curve operation (the cost of all the elliptic curve operations inside the calls to \SupersingularModularComputation{}), we see that the global complexity is only quadratic in $\ell$. Indeed, we use primes $q$ of size $O(\ell^{1+ \varepsilon})$ for which there exists only $O(\ell^{1+\varepsilon})$ supersingular j-invariants. This means that we can simply compute them all with \OrdersToJBig{} in quasi-linear time in $\ell$, thus yielding a quadratic complexity over all CRT primes. 
This is why we expect our algorithm to produce a practical speed-up over the BLS method.

\paragraph*{Batching the computation.}
Similarly to the Hilbert polynomial case, the set of small primes can be reused in the computations over various $\ell$. In fact, the situation is even better in the modular case, because, apart from size, there are no restrictions on the primes. Thus, if we want to compute $\Phi_{\ell_i}$ for primes $\ell_1,\ell_2,\ldots, \ell_k$ of the same size, we will be able to use the same exact set $\primes{}$ for all the computations. Thus, only the polynomial reconstruction phase will be specific to each $\ell_i$, and the rest needs to be done only once.

\subsection{Comparison between different existing methods}
\label{sec: modular comparison}

Here, we compare the complexity of various known methods to compute modular polynomials. Note that some of these methods are proven under some quite ad-hoc heuristics (ours in particular) while some others are proven rigorously. However, all the heuristics used in the various method are plausile and have been verified experimentally. Thus, we don't expect that these heuristics should be problematic for a practical comparison.  

Here is the list of algorithms to consider:

\begin{enumerate}
    \item Our direct algorithm \SupersingularModularComputation{}.
    \item The direct algorithm from Robert \cite{robert2022some}. 
    \item The CRT algorithm from BLS \cite{broker2012modular}. 
    \item Our CRT algorithm \ModularComputation{}.
    \item The CRT algorithm from Robert \cite{robert2022some}. 
\end{enumerate}

The two direct algorithms are the only known algorithms for generic primes whose complexity is only quadratic in $\ell$. We have stated the complexity of the two variants of \SupersingularModularComputation{} in \cref{prop: supersingular modular computation}. The complexity of Robert's direct algorithm is $O(\ell^2 \log p \log^{2+ 3 u} \ell)$ for some parameter $u$ that might be $2$ or $4$ (it's not really clear which one). 

On the other hand, the complexity of the three CRT algorithms is the same: it is cubic in $\ell$ and independand of $p$. BLS and ours are optimal in memory, while Robert's is not. 

We will compare these various complexities by makin the value of $p$ vary while $\ell$ remains fixed. 

For small values of $p$, \SupersingularModularComputation{} with \OrdersToJBig{} will be the best algorithm as noted in the end of \cref{sec: direct supersingular modular poly computation}. However, due to the linear complexity of $p$, this algorithm will quickly be outperformed by the other algorithms as $p$ grows. As soon as $\ell = o (\sqrt{p})$, it will be better to use \SupersingularModularComputation{} with \OrdersToJSmall{}. The $\log \ell$ factor is only quadratic in the complexity of \SupersingularModularComputation{} so it will remain better than Robert's direct algorithm when $p$ remains not too big. However, since the $\log p$ factor has an exponent of $5$ in \SupersingularModularComputation{} while Robert's algorithm is linear in $\log p$, the latter will eventually outperform our direct algorithm. The exact cross-point will depend on the value of $u$. 

Finally, due to the fact that the CRT methods have a running time independant of $p$, they will end up being more efficient than the direct algorithms when $\ell = o(\log p)$. 

The practical efficiency of Robert's algorithm is hard to estimate, but it might not be competitive with the other two due to its much bigger memory requirement (which ends being quite problematic as modular polynomial are huge). 
We have already argued that we expect our method to outperform the method from BLS in practice.   




\section{Conclusion}

We have introduced several new algorithms to compute modular polynomials of level $\ell$ and Hilbert polynomials of discriminant $D$ modulo a generic prime number $p$ from supersingular curves. The direct version of our algorithmsn have complexity in $\ell^2$ and $\sqrt{|D|}$ respectively for generic primes. Depending on the relative size of $\ell$,$|D|$ and $p$, we exhibit improvements over the best known asymptotic complexities for a significant range of primes. 

Moreover, when applied to the CRT method, we obtain an algorithm whose complexity is the same as previously known CRT method, but with the potential to give a practical improvement (in particular in the case of modular polynomials).  

It remains to see how efficient our new algorithms are in practice. There are several practical challenges to overcome before providing an implementation of the proposed algorithms (in particular related to the field extensions involved in the computations of some isogenies), and this is why we leave the concrete implementation to future work.

\bibliographystyle{alpha}
\bibliography{biblio}

\appendix

\end{document}